\definecolor{Red}{rgb}{0.7,0,0.1}
\definecolor{Green}{rgb}{0,0.7,0}
\numberwithin{equation}{section}
\newtheorem{Theorem}{Theorem}[section]
\newtheorem{Proposition}[Theorem]{Proposition}
\newtheorem{Lemma}[Theorem]{Lemma}
\newtheorem{Corollary}[Theorem]{Corollary}
\theoremstyle{definition}
\newtheorem{Example}{Example}[section]
\newtheorem{Assumption}{Assumption}
\newtheorem{Not}{Notation}[section]
\makeatother\newtheorem{Remark}[Theorem]{Remark}
\newcommand{\PP}{\mathbf{P}}
\newcommand{\N}{\mathbf{N}}
\newcommand{\E}{\mathbf{E}}
\newcommand{\RR}{\mathbf{R}}
\begin{document}

\title[Functional inequalities for diffusions with degenerate noise]{Functional inequalities for a family of infinite-dimensional diffusions with degenerate noise}

\author[Baudoin]{Fabrice Baudoin{$^{\ast}$}}
\thanks{\footnotemark {$\ast$} This research was supported in part by NSF grant DMS-\textcolor[rgb]{1.00,0.00,0.00}{ DMS-2247117}.}
\address{Department of Mathematics\\
University of Connecticut\\
Storrs, CT 06269, USA} \email{fabrice.baudoin@uconn.edu}

\author[Gordina]{Maria Gordina{$^{\ast\ast}$}}
\thanks{\footnotemark {$\ast\ast$} Research was supported in part by NSF grant  \textcolor[rgb]{1.00,0.00,0.00}{DMS-2246549}. The author acknowledges the support by the Hausdorff Center of Mathematics (Bonn, Germany) and the IHES (France), where parts of the work were completed.}
\address{Department of Mathematics\\
University of Connecticut\\
Storrs, CT 06269, USA} \email{maria.gordina@uconn.edu}

\author[Herzog]{David P. Herzog{$^{\ddag}$}}
\thanks{\footnotemark {$\ddag$} Research was supported in part by NSF grants \textcolor[rgb]{1.00,0.00,0.00}{DMS-1612898} and \textcolor[rgb]{1.00,0.00,0.00}{DMS-1855504}}
\address{Department of Mathematics\\Iowa State University\\Ames, IA 50311, U.S.A.} \email{dherzog@iastate.edu}

\author[Kim]{Jina Kim{$^{\ddag}$}}
\thanks{\footnotemark {$\ddag$} Research was supported in part by NSF grants \textcolor[rgb]{1.00,0.00,0.00}{DMS-1612898} and \textcolor[rgb]{1.00,0.00,0.00}{DMS-1855504}}
\address{Department of Mathematics\\Trinity University\\San Antonio, TX 78212, U.S.A.} \email{jkim7@trinity.edu}

\author[Melcher]{Tai Melcher{$^{\dagger}$}}
\thanks{\footnotemark {$\dagger$} Research was supported in part by NSF grant \textcolor[rgb]{1.00,0.00,0.00}{DMS-1255574}.}
\address{Department of Mathematics\\
University of Virginia \\
Charlottesville, VA 22903, USA} \email{melcher@virginia.edu}


\begin{abstract}
For a family of infinite-dimensional diffusions with degenerate noise, we develop a modified $\Gamma$ calculus on finite-dimensional projections of the equation in order to produce explicit functional inequalities that can be scaled to infinite dimensions.  The choice of our $\Gamma$ operator appears canonical in our context, as the estimates depend only on the induced control distance.  We apply the general analysis to a number of examples, exploring implications for quasi-invariance and uniqueness of stationary distributions.      
\end{abstract}

\keywords{quasi-invariance, hypoellipticity, Kolmogorov diffusion, Wang-Harnack inequality}

\maketitle

\tableofcontents

\subjclass{Primary 60J60, 28C20; Secondary 35H10}

\renewcommand{\contentsname}{Table of Contents}

\maketitle

\section{Introduction}

Understanding when measures on infinite-dimensional spaces possess smoothness properties reminiscent of those on $\RR^n$ is a fundamental problem in the theory of diffusion processes.  In the finite-dimensional context of $\RR^n$, H\"{o}rmander's bracket generating condition~\cite{Hormander1967a} provides the criteria to determine when the law of the diffusion with smooth $(C^\infty)$ coefficients has a smooth  density with respect to Lebesgue measure.  At the level of the associated diffusion, H\"{o}rmander's condition translates to how the \emph{external} randomness present in the equation propagates \emph{internally} to produce a distribution with $C^\infty$ density.  Thus, provided there is sufficient randomness in the equation to ensure the needed propagation, the existence and smoothness of the density follows from H\"{o}rmander's result.  

While the mantra of \emph{sufficient noise implies smoothness of the law} applies readily in the finite-dimensional setting via H\"{o}rmander's classical hypoellipticity result, the story in infinite dimensions is more nuanced. For example, even in relatively simple settings when noise acts on every basis direction in an infinite-dimensional Hilbert space, which in particular acts as a \emph{natural} phase space for the stochastic solution, the laws of two solutions started from close initial conditions may be mutually singular at any fixed time $t>0$.  This, in turn, implies that the associated Markov semigroup is not strong Feller~\cite[Example 3.15]{HM_06}. Furthermore, while H\"{o}rmander's condition in finite dimensions allows for the development of regularity theory for the corresponding PDEs even in the absence of ellipticity, such PDE techniques are not available in infinite dimensions. For example, even in the context of an infinite-dimensional Brownian motion, the classical Harnack inequality enjoyed by similar, non-degenerate finite-dimensional processes fails to hold~\cite{BassGordina2012}.

Because of the prevalence of degenerate noises in applications and these nuances, understanding what hypoellipticity means for infinite-dimensional diffusions continues to be an active area of research.  On the one hand, significant progress has been made in this direction in the context of fluid mechanics.  Indeed, building off of the pioneering works on unique ergodicity of stochastically-forced PDEs~\cite{BKL_01, DD_03, FM_95, KS_01} as well as the known behavior of the finite-dimensional Galerkin approximations~\cite{EMattingly2001, Rom_04}, Hairer and Mattingly introduced the notion of \emph{asymptotic strong Feller property} in order to prove unique ergodicity of the two-dimensional Navier--Stokes equations on the period box under highly degenerate stochastic forcing~\cite{HM_06}.  This property was further developed and investigated in the works~\cite{HairerMattingly2011b, HMS_11, MP_06}.  In this context, there is just enough smoothing \emph{at time infinity}, as defined by the asymptotic strong Feller property, to conclude uniqueness of steady states.  We refer also to the work~\cite{FGHRW_16} which studies the Boussinesq equations under degenerate stochastic forcing, validating the asymptotic strong Feller property in that context. 

On the other hand, smoothness of measures in infinite dimensions can be understood as quasi-invariance under transformations such as translations. This allows for the definition of smoothness even in the absence of a natural reference measure such as Lebesgue measure. This point of view was pioneered by Malliavin in \cite{Malliavin1990a} leading to development of Malliavin calculus. More relevant to the current paper is the connection between hypoellipticity in infinite dimensions to quasi-invariance properties and their relation to classical functional inequalities, especially in the context of heat kernel measures on infinite-dimensional Heisenberg-like groups~\cite{BaudoinGordinaMelcher2013, BaudoinGordinaMariano2020, BassGordina2012, DG_08, DG_09, Gor_03}.  In these settings, the noise structure in the equations is different than in the fluid models above, as the driving external randomness is infinite-dimensional as opposed to acting on a few low frequencies in Fourier space.  Furthermore, techniques from Dirichlet forms, classical Cameron-Martin-Girsanov theorem and $\Gamma$ calculus are often employed in place of Harris' theorem and Malliavin calculus in the fluids setting.  We refer also to~\cite{DL_95, FR_00} for related work.

The goal of this paper is to make progress on understanding the meaning of hypoellipticity in infinite-dimensions by developing a modified $\Gamma$ calculus. We build off of understanding from previous work in~\cite{Baudoin2017a, BaudoinGordinaHerzog2021, BaudoinGordinaMariano2020, BGM_23}.  Specifically, we design a modified $\Gamma$ operator in order to obtain classical functional inequalities, e.g. Wang--Harnack and reverse log--Sobolev, for a class of infinite-dimensional diffusions arising as solutions to certain stochastic differential equations. These are generalizations of the Kolmogorov diffusion studied in~\cite{Baudoin2017a, BaudoinGordinaHerzog2021, BaudoinGordinaMariano2020, BGM_23}. An important contribution of this work is that the choice of our modified $\Gamma$ operator appears canonical, as the constants in the functional inequality bounds are independent of both the noise and spatial dimension for finite-dimensional projections of the equation.  However, the dependence on these parameters is intrinsic in the induced distance, which we show can be estimated in a variety of examples. The existence of a natural notion of distance here is notable, as previously there has been no geometry in which to work with this class of distributions. This is in contrast to other hypoelliptic models (like the Heisenberg group) where there is a natural geometric framework coming from the sub-Riemannian distance. For this class of diffusions, we also study large-time properties when the structure allows for it, developing a criteria for mutual absolute continuity of stationary distributions for the associated Markov semigroup.  This is done by using the deduced Wang-Harnack type inequality in finite dimensions, and scaling it appropriately to infinite dimensions.

The organization of this paper is as follows.  In \cref{sec:FD}, we introduce the finite-dimensional setting which will later be scaled to infinite dimensions in~\cref{sec:FDTID}.  In~\cref{sec:modgrad}, we develop our modified $\Gamma$ calculus in the context of the finite-dimensional setting.  In particular, we derive our choice of $\Gamma$ operator and deduce a number of functional inequalities based on this choice.  In \cref{sec:class}, we estimate the control distance associated to our choice of $\Gamma$ operator in several concrete examples.  Based on the derivations of these functional inequalities, the control distance is the only term one has left to estimate to produce fully explicit estimates.  Finally in~\cref{sec:FDTID}, we scale the functional inequalities and the finite-dimensional setting to infinite dimensions, obtaining criteria for quasi-invariance as well as mutual absolute continuity of invariant probability measures.  At the end of \cref{sec:FDTID}, we revisit some of the examples discussed in \cref{sec:class}, applying the results obtained in this section.

\section{The finite-dimensional setting}
\label{sec:FD}
\subsection{The main equation and hypoellipticity}  Let $\mathcal{B}$ denote the Borel sigma field of subsets of $\RR^n$.  In the finite-dimensional setting, we consider the following stochastic differential equation (SDE) on $\RR^n$
\begin{align}
\label{eqn:SDE}
d x_t = Ax_t  \, dt + \sigma \, dB_t,
\end{align}
where $A$ and $\sigma$ are $n\times n$ real matrices, and $B_t$ is a standard, $n$-dimensional Brownian motion defined on a probability space $(\Omega, \mathcal{F}, \PP, \E)$. For all initial conditions $x\in \RR^n$, equation~\eqref{eqn:SDE} has a unique pathwise solution $x_t(x)$, defined for all times $t\geqslant 0$, which can be explicitly written as
\begin{align}
\label{eqn:gaussian}
x_t (x)= e^{tA} x + \int_0^t e^{(t-s) A} \sigma dB_s .
\end{align}
Unless we must emphasize the initial condition, we will write $x_t$ as shorthand notation for a generic solution of~\eqref{eqn:SDE}.

Solutions of \eqref{eqn:SDE} are Markovian and we let $\{ P_t \}_{t\geqslant 0}$ denote the corresponding Markov semigroup.  We recall that $\{ P_t \}_{t\geqslant 0}$ acts on bounded, $\mathcal{B}$-measurable functions $f: \RR^n\rightarrow \RR$ by
\begin{align*}
P_t f(x) := \E f(x_t(x)), \,\, t\geqslant 0,
\end{align*}
and acts dually on a probability measure $\nu$ on $\left( \RR^{n}, \mathcal{B}\right)$  via
\begin{align*}
\nu P_t(A):= \int_{\RR^n} \nu(dx) P_t \mathbf{1}_A(x), \,\, \,\,t \geqslant 0, \,\, A\in \mathcal{B}.
\end{align*}
A probability measure $\nu$ on $\left( \RR^{n}, \mathcal{B}\right)$ is called a \emph{stationary distribution} if $\nu P_t =\nu$ for all $t\geqslant 0$.
 For $x\in \RR^n$, $t\geqslant 0$ and $A\in \mathcal{B}$, we let
\begin{align*}
P_t(x, A): = P_t \mathbf{1}_A(x)= \PP \{ x_t(x) \in A \}
\end{align*}
denote the \emph{Markov transition probability} associated to $\{ P_t \}_{t\geqslant 0}$.  Throughout, $L$ will denote the following second-order operator
\begin{align}
\label{eqn:gen}
L = \sum_{i=1}^n (A x)_i   \frac{\partial}{\partial x_i}  + \frac{1}{2}\sum_{i,j=1}^n (\sigma \sigma^{\ast})_{ij} \frac{\partial^2}{\partial x_i \partial x_j}.
\end{align}
Note that $L$ corresponds to the action of the infinitesimal generator of $\{ P_t\}_{t\geqslant 0}$ on a domain of sufficiently smooth functions, e.g. $C^2$ functions $f:\RR^n\rightarrow \RR$ with compact support.  We offer the slight abuse of terminology and refer to $L$ as the generator of the Markov process $x_t$ throughout.

We are interested in the case when the noise in equation~\eqref{eqn:SDE} is \emph{degenerate}, i.e. $\text{rank} (\sigma) < n$, but the process $x_t$ has a transition probability density function $p_t(x,y)$ respect to Lebesgue measure which is $C^\infty$ for all $(t,x,y) \in (0, \infty) \times \RR^n \times \RR^n$.  When $\text{rank}(\sigma)< n$, the existence and smoothness of the transition density, which will be referred to throughout as \emph{hypoellipticity}, is not immediate precisely because the noise is degenerate.  However, in this context, hypoellipticity can be established under further conditions on the interaction between $A$ and $\sigma$ in essentially two ways.  The first and perhaps most utilized way is to apply H\"{o}rmander's hypoellipticity theorem~\cite{Hormander1967a} (see also~\cite{StroockPDEBook2012}).  However, H\"{o}rmander's result is more powerful than needed in the context~\eqref{eqn:SDE}.  For our purposes, a more direct way is to verify the \emph{Kalman rank condition}~\cite{KalmanFalbArbibBook1969} (see \cref{assump:1} below) and show that this condition implies hypoellipticity.  The Kalman rank condition is usually employed to ensure controllability of the resulting ordinary differential equation (ODE) when the independent Brownian motions in~\eqref{eqn:SDE} are replaced by deterministic controls.
\begin{Assumption}
\label{assump:1}
The \emph{Kalman rank  condition} is satisfied; that is, if we define the matrix
\begin{align*}
A_\sigma:=\begin{bmatrix}
\sigma & A\sigma & A^2\sigma & \ldots &A^{n-1} \sigma
\end{bmatrix}, \end{align*}
then
\begin{align*}
\text{rank}(A_\sigma)=n.
\end{align*}
\end{Assumption}
Consider the mean $m_t(x)$ and covariance $\Sigma_t$ of the process~\eqref{eqn:gaussian} given by
\begin{align}
\label{eqn:cov}
m_t(x) = e^{tA} x \qquad \text{ and } \qquad
\Sigma_t = \int_0^t e^{(t-s) A} \sigma \sigma^{\ast} e^{(t-s) A^{\ast}} \, ds.
\end{align}
A short argument (see \cref{lem:pd} below) shows that \cref{assump:1} implies that $\Sigma_t$ as in~\eqref{eqn:cov} is invertible for all $t>0$.  This  in turn implies that the process $x_t$ is multivariate Gaussian with an explicit transition density given by
\begin{align}
\label{eqn:tdens}
&p_t(x,y)= \frac{1}{\det(2\pi \Sigma_t)^{1/2}} \exp\big(-\tfrac{1}{2} \langle \Sigma_t^{-1} (y- m_t(x)), y- m_t(x)\rangle  \big), \,\,\, x,y \in \RR^n, \, t>0,
\end{align}
where $\langle\cdot, \cdot \rangle$ denotes the standard inner product on $\RR^n$.  Smoothness of $p_t(x,y)$ for $(t,x,y) \in (0, \infty) \times \RR^n \times \RR^n$ can then be readily verified from~\eqref{eqn:cov} and~\eqref{eqn:tdens}.  It should be noted that the proof of \cref{lem:pd} is Malliavin's probabilistic proof of H\"{o}rmander's theorem~\cite{KusuokaStroock1987, Malliavin1978a, Norris1986} in the simplified context~\eqref{eqn:SDE}.

 \begin{Lemma}
\label{lem:pd}
Suppose that \cref{assump:1} is satisfied.  Then for every $t>0$, $\Sigma_t$ in~\eqref{eqn:cov} is invertible.
\end{Lemma}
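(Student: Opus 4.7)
The plan is to exploit the fact that $\Sigma_t$ is symmetric and positive semi-definite, and to show under \cref{assump:1} that its kernel is trivial. Concretely, I would fix $t>0$ and $v\in\RR^n$ with $\langle \Sigma_t v,v\rangle =0$, and aim to conclude $v=0$.

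First, I would rewrite the quadratic form as an integral of a squared norm:
\begin{equation*}
\langle \Sigma_t v,v\rangle = \int_0^t \bigl\langle \sigma\sigma^{\ast} e^{(t-s)A^{\ast}}v,\, e^{(t-s)A^{\ast}}v\bigr\rangle\,ds = \int_0^t \bigl|\sigma^{\ast} e^{(t-s)A^{\ast}}v\bigr|^2\,ds.
\end{equation*}
Vanishing of this integral together with continuity in $s$ of the integrand forces $\sigma^{\ast} e^{(t-s)A^{\ast}}v=0$ for every $s\in[0,t]$. Equivalently, the real-analytic function $s\mapsto \sigma^{\ast} e^{sA^{\ast}}v$ vanishes identically on $[0,t]$.

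Next, I would extract the Kalman information by differentiating this identity in $s$ and evaluating at $s=0$, which yields $\sigma^{\ast}(A^{\ast})^k v = 0$ for every $k=0,1,\ldots,n-1$. These equations say that $v$ is orthogonal to the range of $A^k\sigma$ for each such $k$, i.e.\ $v$ lies in the kernel of $A_\sigma^{\ast}$, where $A_\sigma$ is the Kalman matrix defined in \cref{assump:1}. Since $\mathrm{rank}(A_\sigma)=n$, the adjoint $A_\sigma^{\ast}$ has trivial kernel, so $v=0$.

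The only place where any care is required is in turning the vanishing of the integral into the pointwise vanishing of $\sigma^{\ast}e^{(t-s)A^{\ast}}v$ and then into the family of algebraic relations $\sigma^{\ast}(A^{\ast})^k v=0$; both steps are routine (continuity and analyticity of the matrix exponential), so I do not anticipate a genuine obstacle. The argument is really just the linear, Gaussian specialization of Malliavin's proof of H\"ormander's theorem, exactly as the authors indicate.
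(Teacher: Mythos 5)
Your proof is correct and follows essentially the same line of reasoning as the paper: reduce to the kernel of the quadratic form, rewrite $\langle\Sigma_t v,v\rangle$ as $\int_0^t|\sigma^\ast e^{(t-s)A^\ast}v|^2\,ds$, deduce pointwise vanishing by continuity, differentiate at $s=0$ to get $\sigma^\ast(A^\ast)^k v=0$, and invoke the Kalman rank condition. The only cosmetic difference is that the paper changes variables to write the integrand as $|\sigma^\ast e^{sA^\ast}x|^2$, which is equivalent.
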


\begin{proof}
Fix $t>0$ and suppose that there exists $x\in \RR^n$ such that $\langle \Sigma_t x, x\rangle =0$.  This implies
\begin{align*}
0=\langle \Sigma_t x, x \rangle =  \int_0^t | \sigma^{\ast} e^{s A^{\ast}} x|^2 \, ds.
\end{align*}
Hence, by continuity, $\sigma^{\ast} e^{s A^{\ast}} x=0$ for all $s\in [0, t]$.  Differentiating $\sigma^{\ast} e^{sA^{\ast}}x$ repeatedly with respect to $s$ and evaluating at $s=0$ implies that
\begin{align*}
 \sigma^{\ast} (A^{\ast})^m x =0 \text{ for all } m =0,1,2, \ldots
\end{align*}
 Note by~\cref{assump:1} this implies $x=0$ since the rank of the matrix and its transpose are the same. \end{proof}

 \subsection{Elements of Bakry-\'{E}mery calculus}
In \cref{sec:modgrad}, we provide a framework for establishing classical functional inequalities in the finite-dimensional hypoelliptic setting~\eqref{eqn:SDE}, where the constants in the bounds are independent of the spatial dimension $n$.  Importantly, we employ a modification of the Bakry-\'{E}mery calculus~\cite{BakryEmery1985} by proposing a \emph{natural} gradient associated to the system~\eqref{eqn:SDE} which plays the role analogous to the usual carr\'{e} du champ in the uniformly elliptic setting, e.g when $\text{rank}(\sigma) =n$ in equation~\eqref{eqn:SDE}~\cite{BakryEmery1985}.  Here, we recall some of the basic elements of the Bakry-\'{E}mery calculus.

To the generator $L$ as in~\eqref{eqn:gen}, we associate the \emph{carr\'{e} du champ} $\Gamma$ and its \emph{iterate} $\Gamma_2$, which
for $f,g \in C^\infty(\RR^n; \RR)$ are defined by
\begin{align*}
\Gamma(f,g) := \tfrac{1}{2}L(fg) - &\tfrac{1}{2}g Lf-\tfrac{1}{2}f Lg, \qquad \Gamma(f):= \Gamma(f,f),\\
\Gamma_2(f)& := \tfrac{1}{2}L \Gamma(f)- \Gamma(f, Lf) .
\end{align*}
Importantly, $\Gamma$ and $\Gamma_2$ arise naturally in relation to the Markov semigroup $\{ P_t\}_{t\geqslant 0}$.  If, for example, the Markov process $x_t$ has a stationary distribution $\nu$ and $f :\RR^n\rightarrow \RR$ is bounded measurable, then a formal calculation using stationarity of $\nu$ gives
\begin{align}
\label{eqn:L2norm}
\frac{1}{2}\frac{d}{dt}\| P_t f\|_{L^2(\nu)}^2 &= \int_{\RR^n} (P_t f) L P_t f \, d\nu = - \int_{\RR^n} \Gamma(P_t f) \,d\nu,\\
\notag
\frac{1}{2}\frac{d}{dt} \int_{\RR^n} \Gamma(P_t f) \, d\nu &= \int_{\RR^n} \Gamma(L P_t f, P_t f) \, d\nu =- \int_{\RR^n} \Gamma_2(P_t f) \, d\nu.
\end{align}
In particular, the forms $\Gamma, \Gamma_2$ are the resulting objects that arise from iterating time with respect to the norm $\| \cdot \|_{L^2(\nu)}$.  This calculation has been exploited in several situations in order to study convergence to equilibrium for SDEs with an explicit stationary distribution $\nu$~\cite{Baudoin2017a, BakryEmery1985, BaudoinGordinaHerzog2021, DolbeaultMouhotSchmeiser2015, HerauNier2004, GrothausStilgenbauer2015, Talay2002}.  Such a function was also used to prove a logarithmic Sobolev inequality in \cite{BakryEmery1986}, and this approach has been employed to prove different functional inequalities as described in the monograph \cite{BakryGentilLedouxBook}.

Crucially in what follows, we employ a generalization of $\Gamma$ and $\Gamma_2$. Recalling that $\langle \cdot, \cdot \rangle$ denotes the standard inner product on $\RR^n$, for any $n\times n$ real matrix $G$ and $f,g\in C^\infty(\RR^n; \RR)$, we introduce the notation
\begin{align}\label{eqn:gammanot}
&\Gamma^G(f,g) =  \langle G \nabla f , \nabla g\rangle,\qquad
\Gamma^G(f)=\Gamma^G(f,f),
\\
\label{eqn:gammanot1}& \Gamma_2^G(f) = \frac{1}{2}L \Gamma^G(f)- \Gamma^G(f, Lf).
\end{align}
We see that $\Gamma^G$ is a generalization of $\Gamma$ since
\begin{align*}
\Gamma(f,g) = \frac{1}{2}\sum_{\ell, m=1}^n (\sigma \sigma^{\ast})_{\ell m} \partial_\ell f \partial_m g = \Gamma^{\tfrac{\sigma \sigma^{\ast}}{2}}(f,g).
\end{align*}
Similar functionals with a time-independent $G$ were considered in \cite{baudoin2016wasserstein, Monmarche2023}. Below, we find a convenient way to define a time-dependent, \emph{natural} $G$ associated to $x_t$ from which the desired functional inequalities can be obtained and such that the constants in the bounds of these inequalities are independent of the dimension $n$.  Although we will not always need to assume it, $G$ should be thought of as symmetric, positive-definite so that $\Gamma^G(f)$ is equivalent to $| \nabla f|^2$, and so can be interpreted as changing the underlying metric.

%
%
%
%

\section{Functional inequalities by modified gradients}
\label{sec:modgrad}

The goal of this section is to construct an appropriate, time-dependent matrix $G$ so that, by using $\Gamma^G$ and $\Gamma_2^G$ defined in~\eqref{eqn:gammanot} and~\eqref{eqn:gammanot1}, we arrive at various functional inequalities where the constants in the inequalities are independent of the spatial dimension $n$.  We will see that by using the framework outlined below, we arrive at a natural choice for $G$.  Note that a similar strategy was employed using a diagonal matrix in the setting of the Kolmogorov diffusion in~\cite{BaudoinGordinaMariano2020, BaudoinGordinaMelcher2023}.  Our construction holds in the more general setting~\eqref{eqn:SDE} for a convenient symmetric positive-definite matrix $G$.  We also remark that a similar construction was used in the paper~\cite{Menegaki2020} in the context of chain of oscillators, but the matrix used there is not time-dependent. We finally point out \cite{MR4444114} where dimension dependent Hardy-Littlewood-Sobolev  inequalities for similar types of operators are obtained using heat kernel estimates.

We begin by computing $\Gamma_2^G$ for a general, symmetric matrix $G$.  Here and in what follows, all matrices below are assumed to be spatially constant.

\begin{Proposition}\label{prop:2}
Suppose that $G$ is an $n\times n$ symmetric matrix.  Then for all $f\in C^\infty(\RR^n; \RR)$, we have the formula
\begin{align}
\label{eqn:gamma2}\Gamma_2^G(f)= - \Gamma^{AG}(f)+ \frac{1}{2}\sum_{\ell} \Gamma^G( (\sigma^{\ast} \nabla f)_\ell).
\end{align}
If we furthermore assume that $G$ is nonnegative-definite, then
 \begin{align}
 \label{eqn:gamma2bound}
\Gamma_2^G(f)\geqslant - \Gamma^{AG}(f)
\end{align}
for all $f\in C^\infty(\RR^n; \RR)$.
\end{Proposition}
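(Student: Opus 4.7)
The plan is to do a direct computation exploiting the fact that the drift $Ax$ is linear and the diffusion matrix $\sigma\sigma^{\ast}$ is constant, which makes the commutator between $L$ and the gradient $\nabla$ extremely clean. First I would derive the elementary intertwining
\[
\nabla(Lf) \;=\; A^{\ast}\nabla f + L(\nabla f),
\]
where $L$ acts componentwise on the vector $\nabla f$. The extra $A^{\ast}\nabla f$ term comes from differentiating the drift coefficient $Ax$, and there is no lower-order correction from $\sigma\sigma^{\ast}$ because it is constant. Pairing with $G\nabla f$ and using $\langle G\nabla f, A^{\ast}\nabla f\rangle = \langle AG\nabla f,\nabla f\rangle = \Gamma^{AG}(f)$ (which requires only symmetry of $G$) gives
\[
\Gamma^G(f,Lf) \;=\; \Gamma^{AG}(f) + \langle G\nabla f, L\nabla f\rangle.
\]

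Next I would expand $\tfrac{1}{2}L\Gamma^G(f)$ with $\Gamma^G(f)=\sum_{i,j}G_{ij}\partial_i f\,\partial_j f$ using the product rule
\[
L(uv) = uLv + vLu + \sum_{k,m}(\sigma\sigma^{\ast})_{km}\partial_k u\,\partial_m v
\]
applied to $u=\partial_i f$, $v=\partial_j f$. After exploiting the symmetry of $G$ to collect the two first-order terms, this yields
\[
\tfrac{1}{2}L\Gamma^G(f) \;=\; \langle G\nabla f, L\nabla f\rangle \;+\; \tfrac{1}{2}\!\!\sum_{i,j,k,m}\!\!G_{ij}(\sigma\sigma^{\ast})_{km}(\partial_k\partial_i f)(\partial_m\partial_j f).
\]
Subtracting the formula for $\Gamma^G(f,Lf)$ from above, the $\langle G\nabla f, L\nabla f\rangle$ terms cancel exactly, leaving $-\Gamma^{AG}(f)$ plus the quartic-in-Hessian remainder.

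It then remains to identify this remainder with $\tfrac{1}{2}\sum_\ell \Gamma^G((\sigma^{\ast}\nabla f)_\ell)$. Writing $(\sigma^{\ast}\nabla f)_\ell = \sum_k \sigma_{k\ell}\partial_k f$ and differentiating gives $\partial_i(\sigma^{\ast}\nabla f)_\ell = \sum_k \sigma_{k\ell}\partial_k\partial_i f$; summing the identity $\sum_\ell \sigma_{k\ell}\sigma_{m\ell} = (\sigma\sigma^{\ast})_{km}$ against $G_{ij}(\partial_k\partial_i f)(\partial_m\partial_j f)$ reproduces the remainder, proving \eqref{eqn:gamma2}. For the inequality \eqref{eqn:gamma2bound}, nonnegative-definiteness of $G$ forces each $\Gamma^G((\sigma^{\ast}\nabla f)_\ell) = \langle G\nabla(\sigma^{\ast}\nabla f)_\ell, \nabla(\sigma^{\ast}\nabla f)_\ell\rangle\geqslant 0$, so the remainder is nonnegative and dropping it yields the stated bound.

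I do not anticipate a serious obstacle: the computation is linear-algebraic once the intertwining $\nabla L = A^{\ast}\nabla + L\nabla$ is in hand, and the only point requiring care is bookkeeping the indices when reassembling the Hessian quartic form as a sum of $\Gamma^G$ applied to the scalar components of $\sigma^{\ast}\nabla f$. The main subtlety worth flagging is the use of symmetry of $G$ at two steps (to identify $\langle G\nabla f, A^{\ast}\nabla f\rangle$ with $\Gamma^{AG}(f)$ and to collapse the two mixed first-order terms into $2\langle G\nabla f, L\nabla f\rangle$); without it, one would obtain $\tfrac{1}{2}\Gamma^{(A+A^{\ast})G/2 + \cdots}(f)$-style corrections and the identity in the stated form would fail.
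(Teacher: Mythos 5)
Your proof is correct and follows essentially the same route as the paper, which compresses the intertwining $\nabla(Lf)=A^{\ast}\nabla f + L\nabla f$, the product-rule expansion of $L\Gamma^G(f)$, and the reassembly of the Hessian quartic as $\tfrac{1}{2}\sum_\ell\Gamma^G\left((\sigma^{\ast}\nabla f)_\ell\right)$ into the phrases ``standard calculations'' and ``matrix arithmetic''; you simply make those steps explicit. One small correction to your closing remark: symmetry of $G$ is not in fact needed for $\langle G\nabla f, A^{\ast}\nabla f\rangle = \langle AG\nabla f, \nabla f\rangle = \Gamma^{AG}(f)$, since over $\RR^n$ this is just moving $A^{\ast}$ across the inner product and unwinding the definition of $\Gamma^{AG}$; symmetry of $G$ enters only when collapsing the two mixed first-order terms of $L\Gamma^G(f)$ into $2\langle G\nabla f, L\nabla f\rangle$.
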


\begin{proof}
	Let $f\in C^\infty(\RR^n; \RR)$.
Standard calculations using symmetry of $G$ give
\begin{align*}
 L \Gamma^G(f) &= 2 \Gamma^G(f, Lf) - 2\Gamma^{A G}(f) + \sum_{\ell, m}(\sigma \sigma^{\ast})_{\ell m} \langle G \nabla (\partial_\ell f), \nabla (\partial_m f) \rangle.
 \end{align*}
Relation~\eqref{eqn:gamma2} then follows using matrix arithmetic on the last term on the right-hand side above.  Under the additional assumption that $G$ is nonnegative-definite, the inequality~\eqref{eqn:gamma2bound} follows immediately from~\eqref{eqn:gamma2}.
\end{proof}

To obtain a gradient estimate using a time-dependent matrix $G$ in $\Gamma^G$, we fix a horizon time $t>0$, and $f\in L^\infty(\RR^n;\RR)$ and consider the functional
\begin{align}\label{eqn:phidef}
\phi(s):=  P_s \Gamma^{G(s)}( P_{t-s} f), \,\,\, s\in [0, t),
\end{align}
for a differentiable matrix $s\mapsto G(s)$ on $[0, t]$ which is $n\times n$, symmetric.  Below, we see that the expression~\eqref{eqn:phidef} plays a role similar to the $L^2(\nu)$ norm in~\eqref{eqn:L2norm}.  The following lemma yields the expression which will, in turn, lead to the definition of a convenient $G$.
 \begin{Lemma}
 \label{lem:firstlem}
Fix $t>0$.  Suppose that $\cref{assump:1}$ is satisfied and that $s\mapsto G(s)\in C^1([0, t]; M_{n\times n})$ is symmetric and nonegative-definite on $[0, t]$.  For any $f\in  L^\infty(\RR^n; \RR)$, let $\phi$ be as in~\eqref{eqn:phidef}.  Then $\phi$ is defined and continuously differentiable on $(0,t)$ and, moreover, for $s\in (0,t)$ we have 
\begin{align}
\label{eqn:phip1}
\phi'(s)= P_s 2\Gamma_2^{G(s)} (P_{t-s} f) + P_s \Gamma^{G'(s)}(P_{t-s} f) \geqslant P_s \Gamma^{G'(s)-2AG(s)} (P_{t-s} f) .
\end{align}
 \end{Lemma}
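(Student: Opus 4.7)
The plan is to differentiate $\phi(s)=P_s \Gamma^{G(s)}(P_{t-s}f)$ via the product rule in the three places $s$ appears (the outer semigroup, the matrix $G(s)$, and the inner semigroup), then recognize the $\Gamma_2^{G(s)}$ combination, and finally apply \cref{prop:2} for the stated lower bound. The actual computation is short; the real work is justifying that $\phi$ is defined and continuously differentiable on $(0,t)$, which requires smoothness and boundedness of sufficiently many derivatives of $P_{t-s}f$.

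First I would handle the regularity. Because $f \in L^\infty(\RR^n;\RR)$ and \cref{assump:1} holds, \cref{lem:pd} gives the explicit Gaussian kernel \eqref{eqn:tdens}, so for each $s\in[0,t)$ the function $u(s,x) := (P_{t-s}f)(x)$ is $C^\infty$ in $x$ with derivatives of all orders bounded uniformly on compact subsets of $s\in[0,t)$ (by differentiating under the Gaussian integral and using that $\|f\|_\infty<\infty$). Consequently $\Gamma^{G(s)}(P_{t-s}f)(x)=\langle G(s)\nabla u(s,x),\nabla u(s,x)\rangle$ is a smooth, bounded function of $x$ for each $s$, so $P_s\bigl[\Gamma^{G(s)}(P_{t-s}f)\bigr]$ makes sense and $\phi(s)$ is well-defined on $[0,t)$. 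The same bounds, together with $G\in C^1([0,t])$, let me differentiate under the Gaussian transition density when computing $\phi'(s)$.

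Next I compute $\phi'(s)$. The standard identities for the (possibly non-symmetric) semigroup generated by $L$ give $\partial_s P_s g = P_s Lg$ for smooth bounded $g$ with bounded derivatives, and $\partial_s P_{t-s}f = -L P_{t-s}f$. Writing $u = P_{t-s}f$ and applying the chain/product rule,
\begin{align*}
\tfrac{d}{ds}\Gamma^{G(s)}(u)
&= \Gamma^{G'(s)}(u) + 2\langle G(s)\nabla u,\nabla\partial_s u\rangle \\
&= \Gamma^{G'(s)}(u) - 2\,\Gamma^{G(s)}(u, Lu).
\end{align*}
Pulling the outer $P_s$ through by the semigroup identity yields
\begin{align*}
\phi'(s)
&= P_s L\Gamma^{G(s)}(u) + P_s\!\left[\Gamma^{G'(s)}(u) - 2\Gamma^{G(s)}(u,Lu)\right] \\
&= P_s\!\left[\, 2\Gamma_2^{G(s)}(u) + \Gamma^{G'(s)}(u)\right],
\end{align*}
where in the last step I used the definition $\Gamma_2^{G(s)}(u)=\tfrac12 L\Gamma^{G(s)}(u)-\Gamma^{G(s)}(u,Lu)$ from \eqref{eqn:gammanot1}. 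This is the claimed equality in \eqref{eqn:phip1}. Continuity of $\phi'$ on $(0,t)$ then follows from continuity of $s\mapsto G(s), G'(s)$ and continuity of the bounded derivatives of $u$ in $s$, combined with the Gaussian representation of $P_s$.

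Finally, for the inequality I invoke \cref{prop:2}: since $G(s)$ is symmetric and nonnegative-definite for each $s$,
\[
2\,\Gamma_2^{G(s)}(u) \;\geqslant\; -2\,\Gamma^{AG(s)}(u).
\]
Applying $P_s$ (which preserves pointwise inequalities, being given by integration against a probability kernel) and adding $P_s\Gamma^{G'(s)}(u)$ gives
\[
\phi'(s)\;\geqslant\; P_s\!\left[\Gamma^{G'(s)-2AG(s)}(u)\right],
\]
as required. The only genuine obstacle is the regularity step at the beginning — in particular obtaining smooth, bounded spatial derivatives of $P_{t-s}f$ on $(0,t)$ — but this is immediate from \eqref{eqn:tdens} and the fact that $\Sigma_{t-s}$ is invertible there by \cref{lem:pd}; everything downstream is routine product-rule manipulation and the application of \cref{prop:2}.
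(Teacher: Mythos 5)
Your proof is correct and follows essentially the same route as the paper: you justify regularity via the explicit Gaussian representation of $P_tf$ (matching the paper's Remark~\ref{rem:just}), differentiate $\phi$ by the three-factor product rule using $\partial_s P_s g = P_s L g$, $\partial_s P_{t-s}f = -L P_{t-s}f$, and $\partial_s G(s)$, recognize the $\Gamma_2^{G(s)}$ combination (where symmetry of $G(s)$ is implicitly used to combine the two mixed terms into $2\langle G(s)\nabla u,\nabla\partial_s u\rangle$, just as the paper flags), and then apply Proposition~\ref{prop:2} for the lower bound. The paper's proof is merely more terse; the content is the same.
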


\begin{Remark}
\label{rem:just}
The fact that $\phi$ as in~\eqref{eqn:phidef} makes sense, is continuously differentiable on $(0,t)$ and has \emph{nice} properties permitting the calculations leading to the proof of~\cref{lem:firstlem} follows from \cref{assump:1} and the explicit representation
\begin{align}
\label{eqn:repden}
P_t f(x)&= \frac{1}{\det(2\pi \Sigma_t)^{1/2}} \int_{\RR^n} f(y) \exp\big(-\tfrac{1}{2} \langle \Sigma_t^{-1} (y- m_t(x)), y- m_t(x)\rangle  \big)\, dy
\\
\label{eqn:repden1}&=\E f( \Sigma_t^{1/2} Z+ m_t(x)),
\end{align}
where $t>0$, $m_t(x)$ and $\Sigma_t$ are as in~\eqref{eqn:cov}, and $Z$ is the standard normal distribution on $\RR^n$.  Recalling that the covariance matrix $\Sigma_t$ is invertible for $t>0$ under \cref{assump:1} by \cref{lem:pd}, it follows from~\eqref{eqn:repden} that the semigroup $\{P_t\}_{t\geq 0}$ is strong Feller and Mehler.  Furthermore, one can use  formula~\eqref{eqn:repden} to explicitly find derivatives of $P_t f(x)$, provided $t>0$, to fully justify the regularity of $\phi$ on $(0,t)$ and the remaining derivative calculations in the proof of~\cref{lem:firstlem}.  Later we will also use formula~\eqref{eqn:repden1} to do similar calculations under further regularity hypotheses on the test function $f$.   
\end{Remark}

\begin{proof}[Proof of \cref{lem:firstlem}]
Following \cref{rem:just}, we observe that for $s\in (0,t)$ and $f\in L^\infty(\RR^n; \RR)$, symmetry of $G(s)$ implies
\begin{align*}
\phi^{\prime}(s)&= P_s L \Gamma^{G(s)} (P_{t-s} f) - 2P_s \Gamma^{G(s)}( P_{t-s} f, L P_{t-s} f) + P_s \Gamma^{G^{\prime}(s)}(P_{t-s} f)\\
&=P_s 2\Gamma_2^{G(s)}(P_{t-s} f) + P_s \Gamma^{G^{\prime}(s)} (P_{t-s} f).
\end{align*}
The bound in the result follows after applying~\cref{prop:2}.
\end{proof}

\subsection{Picking $G$ based on \cref{lem:firstlem}}
\label{sec:choice}

There are many workable choices for the matrix $G(s)$ corresponding to the dynamics~\eqref{eqn:SDE}.  Given the bound in relation~\eqref{eqn:phip1}, we argue  that the choice given  in the remainder of this section is canonical.  In particular, fixing a horizon time $t>0$, we define the matrix-valued function $s\mapsto G(s,t)$ on $[0,t]$ by
\begin{align}
\label{eqn:Gstdef}
G(s,t)= \int_s^t e^{(s-v) A}\sigma \sigma^{\ast} e^{(s-v)A^{\ast}} \, dv.
\end{align}
Note that this matrix satisfies the backward matrix-valued ODE
\begin{align*}
\begin{cases}
\partial_s G(s,t)= - \sigma \sigma^{\ast} + A G(s,t) + G(s,t) A^{\ast}, \, s\in [0,t]&
\\
G(t,t)=0.&
\end{cases}
\end{align*}
By symmetry of $G(s,t)$, for any $x\in \RR^n$ we have that
\begin{align*}
\langle \partial_s G(s,t) x, x \rangle = - \langle \sigma \sigma^{\ast} x, x \rangle + \langle 2 A G(s,t) x, x \rangle.
\end{align*}
We observe that picking $G$ in this way allows us to replace the right-hand side of the bound in~\eqref{eqn:phip1} with
\begin{align*}
- P_s \Gamma^{\sigma \sigma^{\ast}}(P_{t-s} f)=-2P_s \Gamma(P_{t-s}f)  = - \frac{d}{ds} P_s (P_{t-s} f)^2.
\end{align*}
As a consequence, we produce a reverse Poincar\'e-type inequality/gradient bound for the semigroup associated to the modified operator $\Gamma^G$.

\begin{Proposition}[Reverse Poincar\'e-type inequality]
\label{prop:L2smoothing}
Let $t>0$ and $s\mapsto G(s,t)$ be as in~\eqref{eqn:Gstdef} and suppose that~\cref{assump:1} is satisfied.  Then for all $f\in L^\infty(\RR^n; \RR)$ we have the explicit bound
\begin{align}
\label{eqn:smoothing}
\Gamma^{G(0,t)}(P_{t} f) \leqslant P_t f^2 - (P_t f)^2.
\end{align}
\end{Proposition}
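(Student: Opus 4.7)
The plan is to integrate the differential inequality already produced by \cref{lem:firstlem} against the canonical choice of $G$, using the backward ODE to convert the right-hand side into a total derivative.

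First I would introduce the auxiliary function
\begin{align*}
\phi(s) := P_s \Gamma^{G(s,t)}(P_{t-s} f), \qquad s \in [0,t),
\end{align*}
which, thanks to \cref{assump:1} and the representation formulas recalled in \cref{rem:just}, is continuously differentiable on $(0,t)$. By \cref{lem:firstlem},
\begin{align*}
\phi'(s) \;\geqslant\; P_s \Gamma^{G'(s,t) - 2 A G(s,t)}(P_{t-s} f).
\end{align*}
The central observation is that the canonical choice \eqref{eqn:Gstdef} was designed precisely so that the backward ODE for $G(s,t)$ gives
\begin{align*}
G'(s,t) - 2 A G(s,t) \;=\; -\sigma \sigma^{\ast}
\end{align*}
after symmetrization (i.e.\ when contracted against $\nabla P_{t-s} f$ on both sides, which is what $\Gamma^{\cdot}$ does). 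Consequently,
\begin{align*}
\phi'(s) \;\geqslant\; -P_s \Gamma^{\sigma\sigma^{\ast}}(P_{t-s} f) \;=\; -2 P_s \Gamma(P_{t-s} f).
\end{align*}

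Next I would recognize the right-hand side as a total derivative. Setting $\psi(s) := P_s (P_{t-s} f)^2$, the standard semigroup identity (again justified by \cref{rem:just}) yields
\begin{align*}
\psi'(s) \;=\; P_s L (P_{t-s} f)^2 - 2 P_s (P_{t-s} f) L P_{t-s} f \;=\; 2 P_s \Gamma(P_{t-s} f).
\end{align*}
Thus $(\phi + \psi)'(s) \geqslant 0$ on $(0,t)$, so $\phi + \psi$ is nondecreasing on $(0,t)$.

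Finally, I would pass to the limits $s \downarrow 0$ and $s \uparrow t$. At $s=0$ one gets $\phi(0) + \psi(0) = \Gamma^{G(0,t)}(P_t f) + (P_t f)^2$. Since $G(t,t) = 0$ by construction, one obtains $\lim_{s \uparrow t} \phi(s) = 0$ (using boundedness of $f$ and the Mehler/strong-Feller representation for continuity up to $t$ of the remaining factor), while $\lim_{s \uparrow t} \psi(s) = P_t f^2$. Monotonicity of $\phi + \psi$ then yields
\begin{align*}
\Gamma^{G(0,t)}(P_t f) + (P_t f)^2 \;\leqslant\; P_t f^2,
\end{align*}
which is the desired bound. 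The main technical obstacle is the last step: verifying the one-sided limit $\phi(s) \to 0$ as $s \uparrow t$ in sufficient generality ($f$ merely bounded measurable), which is where the explicit Gaussian representation \eqref{eqn:repden}--\eqref{eqn:repden1} and the invertibility of $\Sigma_t$ from \cref{lem:pd} do the work by allowing derivatives of $P_{t-s} f$ to be written as integrals against an absolutely integrable kernel, controlled uniformly as $G(s,t) \to 0$.
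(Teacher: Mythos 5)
Your proof is essentially the same as the paper's: both compute $\phi'$ via \cref{lem:firstlem}, use the backward ODE for $G(s,t)$ (in quadratic-form sense, thanks to symmetry of $G$) to turn $G'-2AG$ into $-\sigma\sigma^{\ast}$, recognize $-2P_s\Gamma(P_{t-s}f)$ as the derivative of $-P_s(P_{t-s}f)^2$, and integrate over $(0,t)$. Your packaging of this as monotonicity of $\phi+\psi$ is a cosmetic reorganization of the paper's ``integrate from $\epsilon$ to $t-\epsilon$ and subtract.'' The one substantive difference is at the end: the paper first reduces to $f\in C_0^\infty$ by a density argument, which makes the $s\uparrow t$ boundary term straightforward to kill (one writes $\Gamma^{G(t-\epsilon,t)}(P_\epsilon f)$ explicitly in terms of $\E\, e^{\epsilon A^{\ast}}\nabla f(\Sigma_\epsilon^{1/2}Z+m_\epsilon(x))$, which is uniformly bounded and tends to $0$ since $G(t-\epsilon,t)\to 0$), and only then extends to all of $L^\infty$. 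You instead assert that the $s\uparrow t$ limit can be handled directly for merely bounded measurable $f$ using the Gaussian kernel; while morally plausible, this is more delicate than what is needed, since for bounded measurable $f$ one does not have pointwise control of $\nabla f$ and must instead push the derivative onto the kernel, whose weights blow up as $\epsilon\downarrow 0$, potentially at odds with $G(t-\epsilon,t)\to 0$. The paper's reduction sidesteps this balance-of-rates question entirely and is the cleaner route; you should adopt it rather than fighting the endpoint limit in full generality.
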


\begin{Remark}
By a nearly identical argument to the one used in \cref{lem:pd}, under \cref{assump:1}, the matrix $G(0,t)$ is invertible for $t>0$.  Thus the modified gradient on the lefthand side of~\eqref{eqn:smoothing} is comparable to the usual gradient.
\end{Remark}

\begin{proof}[Proof of Proposition~\ref{prop:L2smoothing}]
By~\cref{assump:1} and a density argument, it suffices to prove estimate~\eqref{eqn:smoothing} for $f\in C^\infty_0(\RR^n; \RR)$ where $C_0^\infty(\RR^n;\RR)$ denotes the space of smooth real-valued functions with compact support in $\RR^n$. If we let $\phi(s)$ be as in~\eqref{eqn:phidef} with $G(s):=G(s,t)$, then \cref{lem:firstlem} implies
\begin{align}
\label{eqn:convergetoequil}
 \phi'(s)
&= 2 P_s \Gamma_2^{G(s)} (P_{t-s} f) + P_{s} \Gamma^{G'(s)}(P_{t-s} f)   \geqslant -2 P_s \Gamma(P_{t-s} f)
\end{align}
for $s\in (0, t)$.
Next, notice that for $s\in (0,t)$
\begin{align*}
\frac{d}{ds}P_s (P_{t-s} f)^2 = 2 P_s \Gamma(P_{t-s} f).  \end{align*}
Thus for $\epsilon >0$ sufficiently small, integrating both sides of~\eqref{eqn:convergetoequil} from $\epsilon$ to $t-\epsilon$ we arrive at the bound
\begin{align*}
P_\epsilon \Gamma^{G(\epsilon, t)} (P_{t-\epsilon} f) - P_{t-\epsilon} \Gamma^{G(t-\epsilon, t)} (P_\epsilon f) \leqslant P_{t-\epsilon} (P_\epsilon f)^2 - P_\epsilon (P_{t-\epsilon} f)^2 .
\end{align*}
In order to pass to the limit as $\epsilon \rightarrow 0$ in the above to arrive at~\eqref{eqn:smoothing} for $f\in C^\infty_0(\RR^n; \RR) $, it remains to show that
\begin{align*}
P_{t-\epsilon} \Gamma^{G(t-\epsilon, t)}(P_\epsilon f) \rightarrow 0\text{ as } \epsilon \rightarrow 0.
\end{align*}
Using~\eqref{eqn:repden1} for $t=\epsilon$, we find that
\begin{align*}
\Gamma^{G(t-\epsilon, t)} (P_\epsilon f) = \langle G(t-\epsilon, t) \E e^{\epsilon A^{\ast}} \nabla f( \Sigma_\epsilon^{1/2} Z + m_\epsilon (x)) , \E  e^{\epsilon A^{\ast}} \nabla f( \Sigma_\epsilon^{1/2} Z + m_\epsilon (x)) \rangle \end{align*}
is both uniformly bounded in $\epsilon$ and converges to $0$ as $\epsilon \rightarrow 0$.  This now establishes the bound~\eqref{eqn:smoothing} for $f\in C^\infty_0(\RR^n; \RR)$, finishing the proof.
\end{proof}

Because of the significance of the matrix $G(0,t)$, we introduce the following notation.

\begin{Not}
Throughout the remainder of the paper, we set
\begin{align}
\label{eqn:G_t}
G_t:=G(0,t)
\end{align}
where $G(s,t)$ is as in~\eqref{eqn:Gstdef}.
\end{Not}

\subsection{Reverse log-Sobolev and Wang-Harnack inequalities}
We now see that a similar calculation to the one used to prove the reverse Poincar\'{e} inequality in \cref{sec:choice} can also be used to establish a reverse log Sobolev inequality and the Wang-Harnack inequality.

To prove the log-Sobolev inequality, for computational ease, we introduce the spaces $C^\infty_\epsilon(\RR^n; [0, \infty))$, $\epsilon >0$, defined by
\begin{align*}
C_\epsilon^\infty(\RR^n; [0, \infty)) := \{ f \, : \, f = \varphi + \epsilon, \varphi \in C_0^\infty(\RR^n ; [0, \infty) ) \}.
\end{align*}
Here we recall again that $C_0^\infty(\RR^n;[0 , \infty))$ denotes the space of smooth functions $\varphi:\RR^n\rightarrow [0, \infty)$ with compact support in $\RR^n$. 
Similar to~\cref{lem:firstlem}, we need the following computation.
\begin{Proposition}
\label{prop:1}
Fix $t, \epsilon >0$ and suppose that $s\mapsto G(s)\in C^1([0, t]; M_{n\times n})$ is symmetric for any $s \in [0,t]$ and that \cref{assump:1} is satisfied.  For $f\in  C_\epsilon^\infty(\RR^n; [0, \infty))$ and $s \in (0,t)$, define
\begin{align}
\label{eqn:psidef}
\psi(s):= P_s[(P_{t-s} f) \Gamma^{G(s)} ( \log P_{t-s} f)].
\end{align}
Then
\begin{align*}
\psi^{\prime}(s)&= 2 P_s[(P_{t-s} f) \Gamma_2^{G(s)}(\log P_{t-s} f)]+ P_s [(P_{t-s} f) \Gamma^{G^{\prime}(s)}(\log P_{t-s} f)].
\end{align*}
\end{Proposition}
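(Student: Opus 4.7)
The approach mirrors that of \cref{lem:firstlem}. Writing $u(s) := P_{t-s}f$ and $g(s) := \log u(s)$, we have $\psi(s) = P_s F(s)$ with $F(s) = u\,\Gamma^{G(s)}(g)$. Since $f \in C_\epsilon^\infty(\RR^n;[0,\infty))$ enforces $u \geqslant \epsilon > 0$ smoothly (via the Mehler formula recalled in \cref{rem:just}), $g$ is smooth and all the derivatives below are justified. The standard identity
\begin{equation*}
\frac{d}{ds}P_s F(s) = P_s L F(s) + P_s \partial_s F(s)
\end{equation*}
then reduces the problem to computing the two pieces and combining them.

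For the time derivative I would use $\partial_s u = -Lu$ together with the classical relation $\partial_s g = -Lg - \Gamma(g)$ (which follows from $\partial_s\log u = -Lu/u$ and $L\log u = Lu/u - \Gamma(\log u)$), obtaining after the chain rule
\begin{equation*}
\partial_s F = -(Lu)\,\Gamma^{G(s)}(g) + u\,\Gamma^{G^{\prime}(s)}(g) - 2u\,\Gamma^{G(s)}(g, Lg) - 2u\,\Gamma^{G(s)}(g, \Gamma(g)).
\end{equation*}
For the spatial piece, the product rule $L(uv) = uLv + vLu + 2\Gamma(u,v)$ applied with $v = \Gamma^{G(s)}(g)$ gives
\begin{equation*}
LF = u\,L\Gamma^{G(s)}(g) + (Lu)\,\Gamma^{G(s)}(g) + 2\Gamma(u, \Gamma^{G(s)}(g)).
\end{equation*}
Adding the two, the terms carrying $(Lu)\,\Gamma^{G(s)}(g)$ cancel immediately.

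The main obstacle is then the algebraic identity
\begin{equation*}
\Gamma(u, \Gamma^{G(s)}(g)) = u\,\Gamma^{G(s)}(g, \Gamma(g)),
\end{equation*}
which collapses the remaining crossed terms. I would verify it by writing both sides in coordinates: from $\nabla u = u\nabla g$ and the formulas $\nabla \Gamma^{G(s)}(g) = 2\,\mathrm{Hess}(g)\,G(s)\,\nabla g$ and $\nabla \Gamma(g) = \mathrm{Hess}(g)\,\sigma\sigma^{\ast}\,\nabla g$, both sides reduce to $u\,\langle \sigma\sigma^{\ast}\nabla g,\,\mathrm{Hess}(g)\,G(s)\,\nabla g\rangle$ once one invokes the symmetry of $\sigma\sigma^{\ast}$, $G(s)$, and the Hessian. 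With this identity in hand, the leftover terms assemble into $2u\,\Gamma_2^{G(s)}(g) + u\,\Gamma^{G^{\prime}(s)}(g)$, which inside $P_s$ is precisely the claimed formula for $\psi'(s)$.
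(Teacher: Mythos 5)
Your proof is correct and follows essentially the same route as the paper's: both decompose $\psi'(s)$ into $P_s L F + P_s\partial_s F$, use $\partial_s \log P_{t-s}f = -L(\log P_{t-s}f) - \Gamma(\log P_{t-s}f)$ to produce the $\Gamma_2^{G(s)}$ and $\Gamma^{G'(s)}$ pieces, and then cancel the two crossed terms by verifying the identity $\Gamma\big(P_{t-s}f,\Gamma^{G(s)}(\log P_{t-s}f)\big) = (P_{t-s}f)\,\Gamma^{G(s)}\big(\Gamma(\log P_{t-s}f),\log P_{t-s}f\big)$. The only cosmetic difference is that you verify this identity in compact matrix form via $\nabla u = u\nabla g$, $\nabla\Gamma^{G}(g)=2\,\mathrm{Hess}(g)G\nabla g$, $\nabla\Gamma(g)=\mathrm{Hess}(g)\sigma\sigma^{\ast}\nabla g$, whereas the paper writes it out in full index notation; the underlying cancellation is the same.
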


\begin{proof}
Let $t, \epsilon >0$ and $f\in C_\epsilon^\infty(\RR^n; [0, \infty))$.  Observe that for $s\in (0,t)$
\begin{align}
\nonumber \psi'(s)&=  P_s L[(P_{t-s} f) \Gamma^{G(s)}( \log P_{t-s} f)] -P_s[L(P_{t-s} f) \Gamma^{G(s)} (\log P_{t-s} f)] \\
\label{eqn:gammads1}&\qquad + P_s[(P_{t-s} f) \frac{d}{ds}(\Gamma^{G(s)}(\log P_{t-s} f))].
\end{align}
Next, notice that
\begin{align}
\nonumber \frac{d}{ds}(\Gamma^{G(s)}(\log P_{t-s} f))&= \Gamma^{G'(s)}( \log P_{t-s} f)- 2 \Gamma^{G(s)}( L P_{t-s} f/ P_{t-s}f, \log P_{t-s} f)\\
\label{eqn:gammads2}&=  \Gamma^{G'(s)}( \log P_{t-s} f)- 2 \Gamma^{G(s)}( L( \log P_{t-s} f), \log P_{t-s} f)\\
\nonumber &\qquad - 2 \Gamma^{G(s)}( \Gamma( \log P_{t-s} f), \log P_{t-s} f). \end{align}
Combining~\eqref{eqn:gammads1} with~\eqref{eqn:gammads2} gives
\begin{align}
\nonumber \psi'(s)&= P_s L[(P_{t-s} f) \Gamma^{G(s)}( \log P_{t-s} f)]- P_s[L(P_{t-s} f) \Gamma^{G(s)} (\log P_{t-s} f)] \\
\nonumber &\qquad + P_s[(P_{t-s} f)  \Gamma^{G'(s)}( \log P_{t-s} f) ]\\
\nonumber &\qquad -2P_s[(P_{t-s} f)  \Gamma^{G(s)}( L( \log P_{t-s} f), \log P_{t-s} f) ]\\
\nonumber &\qquad -2P_s[(P_{t-s} f) \Gamma^{G(s)}( \Gamma( \log P_{t-s} f), \log P_{t-s} f) ] \\
 \label{eqn:gammads3}&= 2 P_s[ (P_{t-s} f) \Gamma^{G(s)}_2( \log P_{t-s} f)] +2 P_s[ \Gamma(P_{t-s} f, \Gamma^{G(s)} (\log P_{t-s} f))] \\
\nonumber  &\qquad + P_s[(P_{t-s} f)  \Gamma^{G'(s)}( \log P_{t-s} f) ] -2P_s[(P_{t-s} f) \Gamma^{G(s)}( \Gamma( \log P_{t-s} f), \log P_{t-s} f) ].
 \end{align}
Now observe that if $B=\sigma \sigma^{\ast}/2$, we have
\begin{align*}
\Gamma(P_{t-s} f, \Gamma^{G(s)} (\log P_{t-s} f))&= \sum_{\ell, m} B_{m\ell } \partial_\ell (P_{t-s} f) \partial_m ( G \nabla (\log P_{t-s} f) \cdot \nabla( \log P_{t-s} f)) \\
&= 2 \sum_{\ell, m, j,k} B_{m\ell } \partial_\ell (P_{t-s} f) G_{jk} \partial_{k m}^2 (\log P_{t-s} f) \partial_j ( \log P_{t-s} f)
\end{align*}
and
\begin{align*}
&(P_{t-s} f)\Gamma^{G(s)}(\Gamma( \log P_{t-s} f), \log P_{ t-s} f)\\
 &= (P_{t-s} f)\sum_{j, k} G_{jk} \partial_k (\Gamma( \log P_{t-s} f)) \partial_j( \log P_{t-s} f)\\
& = 2 (P_{t-s} f)\sum_{\ell, m, j, k} G_{jk}   B_{m\ell}  \partial_{km}^2 (\log P_{t-s} f) \partial_\ell (\log P_{t-s} f)) \partial_j( \log P_{t-s} f)\\
&= 2 \sum_{\ell, m, j,k} B_{m\ell } \partial_\ell (P_{t-s} f) G_{jk} \partial_{k m}^2 (\log P_{t-s} f) \partial_j ( \log P_{t-s} f) .
\end{align*}
The result now follows after considering~\eqref{eqn:gammads3} since we just proved that
\begin{align*}
2 P_s[ \Gamma(P_{t-s} f, \Gamma^{G(s)} (\log P_{t-s} f))] = 2P_s[(P_{t-s} f) \Gamma^{G(s)}( \Gamma( \log P_{t-s} f), \log P_{t-s} f) ]. \end{align*}

\end{proof}

As a corollary of the previous computation, we obtain the following result.  Below, we extend $x\mapsto x\log x$ to $[0, \infty)$ by defining it to be zero at $x=0$.

\begin{Corollary}[Reverse log Sobolev inequality]
\label{cor:reverse}
Fix $t>0$, suppose \cref{assump:1} is satisfied and that $s\mapsto G(s,t)$ is as in~\eqref{eqn:Gstdef} . Then we have the following reverse log Sobolev inequality
\begin{align}
\label{eqn:RLS}
(P_t f) \Gamma^{G_t} ( \log P_t f)\leqslant 2 P_t (f \log f) - 2(P_t f) \log P_t f,
\end{align}which is satisfied for all $f\in L^\infty(\RR^n; [0, \infty))$ which are not identically zero.
\end{Corollary}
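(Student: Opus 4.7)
The strategy mirrors the proof of \cref{prop:L2smoothing}: exploit that our choice of $G(s,t)$ in~\eqref{eqn:Gstdef} is designed precisely so that the error term $\Gamma^{G'(s,t)-2AG(s,t)}$ collapses to $-\Gamma^{\sigma\sigma^\ast}=-2\Gamma$. First I would reduce to a tractable class of test functions: by a density argument together with \cref{assump:1} (which guarantees the explicit representation in \eqref{eqn:repden1} and positivity of $P_t f$ whenever $f\not\equiv 0$), it suffices to prove \eqref{eqn:RLS} for $f\in C_\epsilon^\infty(\RR^n;[0,\infty))$ and then let $\epsilon\downarrow 0$. Working with this class ensures $\log P_{t-s}f$ is smooth and bounded on $\RR^n$, justifying all differentiations below.

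Fix such an $f$ and let $\psi(s):=P_s[(P_{t-s}f)\,\Gamma^{G(s,t)}(\log P_{t-s}f)]$ as in \cref{prop:1}. Combining \cref{prop:1} with the inequality $\Gamma_2^{G(s,t)}(h)\geqslant -\Gamma^{AG(s,t)}(h)$ from \cref{prop:2} (which applies since $G(s,t)$ is symmetric and, by an argument analogous to \cref{lem:pd}, nonnegative-definite), one gets
\begin{align*}
\psi'(s)\geqslant P_s\!\left[(P_{t-s}f)\,\Gamma^{G'(s,t)-2AG(s,t)}(\log P_{t-s}f)\right].
\end{align*}
The defining ODE $\partial_s G(s,t)=-\sigma\sigma^\ast+AG(s,t)+G(s,t)A^\ast$ implies $G'(s,t)-2AG(s,t)=-\sigma\sigma^\ast$ in the sense of quadratic forms (using symmetry of $G(s,t)$), so the right-hand side equals $-2P_s[(P_{t-s}f)\,\Gamma(\log P_{t-s}f)]$.

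Next I would match this with the time derivative of the entropy functional. A direct computation, using $L(gh)=gLh+hLg+2\Gamma(g,h)$ with $g=P_{t-s}f$ and $h=\log P_{t-s}f$ together with $(P_{t-s}f)L(\log P_{t-s}f)=L(P_{t-s}f)-\Gamma(P_{t-s}f)/P_{t-s}f$, yields
\begin{align*}
\frac{d}{ds}P_s[(P_{t-s}f)\log P_{t-s}f]=P_s\!\left[(P_{t-s}f)\,\Gamma(\log P_{t-s}f)\right].
\end{align*}
Therefore $\psi'(s)\geqslant -2\tfrac{d}{ds}P_s[(P_{t-s}f)\log P_{t-s}f]$ on $(0,t)$.

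Finally I would integrate from $s=\epsilon$ to $s=t-\epsilon$ and pass to the limit $\epsilon\downarrow 0$. Since $G(t,t)=0$ one has $\psi(s)\to 0$ as $s\to t$, while $\psi(s)\to (P_tf)\,\Gamma^{G_t}(\log P_tf)$ as $s\to 0$, with the endpoint control handled exactly as in the proof of \cref{prop:L2smoothing} via the Mehler-type formula \eqref{eqn:repden1}. The result is
\begin{align*}
-(P_tf)\,\Gamma^{G_t}(\log P_tf)\geqslant -2\bigl[P_t(f\log f)-(P_tf)\log P_tf\bigr],
\end{align*}
which is \eqref{eqn:RLS} for $f\in C_\epsilon^\infty$. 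Letting $\epsilon\downarrow 0$ and then extending by dominated convergence (using $f\in L^\infty$ and positivity of $P_tf$ when $f\not\equiv 0$) gives \eqref{eqn:RLS} in the stated generality. The main obstacle is the boundary behavior at $s\to 0$ of $\Gamma^{G(s,t)}(\log P_{t-s}f)$ combined with the singularity of $\log$, which is the reason for introducing the shift by $\epsilon>0$ in $C_\epsilon^\infty$; but once the test function is bounded away from zero, the Mehler representation \eqref{eqn:repden1} gives uniform smoothness of all terms, and the limits are routine.
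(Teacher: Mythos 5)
Your proof is correct and follows the same route as the paper: apply Proposition~\ref{prop:1}, lower bound $\Gamma_2^{G(s,t)}$ via Proposition~\ref{prop:2}, use the defining ODE for $G(s,t)$ to reduce the error term to $-\sigma\sigma^\ast$, match against $\tfrac{d}{ds}P_s[(P_{t-s}f)\log P_{t-s}f]$, integrate over $(\delta, t-\delta)$, and take limits (in fact you spell out the entropy-derivative computation and the nonnegativity of $G(s,t)$ more explicitly than the paper does). One small point to clean up: you reuse $\epsilon$ both as the shift parameter in $C_\epsilon^\infty$ and as the integration endpoint parameter, which clash when you send them to zero in sequence; the paper avoids this by using $\delta$ for the latter.
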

\begin{proof}
Fix $t, \epsilon >0$.  We first prove the bound for $f\in C_\epsilon^\infty(\RR^n; [0, \infty))$.  First note that if $\psi$ is given by~\eqref{eqn:psidef} with $G(s):=G(s,t)$ where $G(s,t)$ is as in~\eqref{eqn:Gstdef}, by~\cref{prop:1} we have for $s\in (0,t)$
\begin{align*}
\psi'(s)&= 2 P_s[(P_{t-s} f) \Gamma_2^{G(s,t)}(\log P_{t-s} f)]+ P_s [(P_{t-s} f) \Gamma^{\partial_sG(s,t)}(\log P_{t-s} f)] \\
&\geqslant -2 P_s[(P_{t-s} f) \Gamma( \log P_{t-s} f)] = - \frac{d}{ds}  2P_s[(P_{t-s} f)  \log P_{t-s} f].\end{align*}
For $\delta >0$ small, integrating the previous inequality from $\delta$ to $t-\delta$ produces
\begin{align*}
& P_\delta [(P_{t-\delta} f) \Gamma^{G(\delta, t)}(\log P_{t-\delta} f)] -\ P_{t-\delta} [(P_\delta f) \Gamma^{G(t-\delta, t)} (\log P_\delta f)]\\
& \leqslant 2 P_{t-\delta} [(P_\delta f)\log P_\delta f]- 2 P_\delta[(P_{t-\delta} f) \log P_{t-\delta} f].
\end{align*}
Applying a line of reasoning  similar to the one used in the proof of \cref{prop:1}, taking $\delta \rightarrow 0$ gives the desired estimate~\eqref{eqn:RLS} in the case when $f\in C_\epsilon^\infty(\RR^n; [0, \infty))$.  Taking $\epsilon \rightarrow 0$, we obtain the bound for $f\in C_0^\infty(\RR^n; [0, \infty))$ which are not identically zero.  Density arguments give the claimed bound for all $f\in L^\infty(\RR^n; [0, \infty))$ which are not identically zero.
\end{proof}

Next, we turn our attention to establishing a Wang-Harnack inequality.  Later, this will be used to prove a quasi-invariance result when we allow the spatial dimension $n\rightarrow \infty$.  To setup the statement of the result, suppose that $K$ is $n\times n$ symmetric, positive-definite.  We call a curve $\gamma\in C^1( [0, T]; \RR^n)$ \emph{subunit} for $K$ if for all $f\in C^1(\RR^n; \RR)$
\begin{align*}
	\bigg| \frac{d}{ds} f(\gamma(s)) \bigg| \leqslant \sqrt{(\Gamma^Kf)(\gamma(s))}.
\end{align*}
For $x,y \in \RR^n$ and $t>0$, let
\begin{align*}
	S_T^K(x,y) := \{ \gamma:[0, T]\rightarrow \RR^n \text{ subunit for } K  \text{ and } \, \gamma(0)=x, \gamma(T)=y \}
\end{align*}
and define the \emph{control distance} between $x,y \in \RR^n$ associated to $K$: 
\begin{align*}
	\rho^K(x,y) := \inf\{ T>0 \, : \, S_T^K(x,y) \text{ nonempty}\}.
\end{align*}
Note that, in our simplified setting of $\RR^n$, a curve $\gamma\in C^1( [0, T]; \RR^n)$ is subunit for $K$ if and only if for every $t \in [0,T]$, $\|K^{-1/2} \gamma'(t) \| \le 1$ and we therefore have 
\begin{align}
\label{eqn:contdist}
\rho^K(x,y)=\| K^{-1/2} (x-y)\|=\sqrt{\langle x-y, K^{-1}(x-y) \rangle}.
\end{align}

For simplicity, when \cref{assump:1} is satisfied, we use the notation
\begin{align}
\label{def:rho_t}
\rho_t(x,y) := \rho^{G_t}(x,y)
\end{align}
to denote the control distance associated to the matrix $G_t$ as in~\eqref{eqn:G_t}.

%
%

\begin{Theorem}[Wang-Harnack inequality]
\label{thm:wang}
Fix $t>0$, $\alpha >1$ and suppose that \cref{assump:1} is satisfied.  Then we have the estimate
\begin{align}
\label{eqn:WH}
(P_t f (x))^\alpha \leqslant P_t f^\alpha(y) \exp\bigg( \frac{ \alpha \rho_{t}^2(x,y)}{2(\alpha -1)}\bigg).
\end{align}
for all $x,y \in \RR^n$ and all $f\in  L^\infty(\RR^n; [0, \infty))$.
\end{Theorem}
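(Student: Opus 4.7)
The plan is to deduce the Wang--Harnack inequality from the reverse log-Sobolev inequality of \cref{cor:reverse} via a path integration argument along a subunit curve for $G_t$, in the spirit of F.-Y.\ Wang's classical derivation, with the modified carr\'{e} du champ $\Gamma^{G_t}$ playing the role of the usual one. First I would reduce to the case where $f$ is bounded and bounded away from $0$, say $f \geqslant \epsilon > 0$, by applying the inequality to $f+\epsilon$ and letting $\epsilon\rightarrow 0$ by dominated convergence at the end. Assuming $x\neq y$, set $T := \rho_t(x,y) > 0$, choose the straight line $\gamma(s) = x + (s/T)(y-x)$ on $[0,T]$, which is subunit for $G_t$ since $\|G_t^{-1/2}\gamma'(s)\| = \rho_t(x,y)/T = 1$ by~\eqref{eqn:contdist}, and interpolate the exponent linearly via $p(s) = 1 + (\alpha-1)s/T$, so that $p'(s) \equiv (\alpha-1)/T$ with $p(0)=1$ and $p(T)=\alpha$.

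The key quantity is
\begin{align*}
\Psi(s) := \frac{1}{p(s)} \log P_t f^{p(s)}(\gamma(s)), \qquad s\in [0,T],
\end{align*}
whose endpoints are exactly the two sides of~\eqref{eqn:WH}: $\Psi(0) = \log P_t f(x)$ and $\Psi(T) = \tfrac{1}{\alpha} \log P_t f^\alpha(y)$. The claim is thus equivalent to $\Psi(0) - \Psi(T) \leqslant \rho_t^2(x,y)/(2(\alpha-1))$. Writing $\phi(p,z) := \log P_t f^p(z)$, the chain rule gives
\begin{align*}
\Psi'(s) = \frac{p'(s)}{p(s)^2}\bigl[p(s)\partial_p \phi - \phi\bigr] + \frac{1}{p(s)}\langle \nabla_z \phi, \gamma'(s)\rangle,
\end{align*}
with both $\partial_p$ and $\nabla_z$ evaluated at $(p(s), \gamma(s))$. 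Applying \cref{cor:reverse} to $f^{p(s)}$, dividing through by $P_t f^{p(s)} > 0$, and recognizing $p \partial_p \phi = P_t(f^p \log f^p)/P_t f^p$, yields
\begin{align*}
\Gamma^{G_t}(\phi) \leqslant 2 X(s), \qquad X(s) := p(s)\partial_p \phi - \phi \geqslant 0,
\end{align*}
where nonnegativity of $X$ is Jensen's inequality for the convex function $u\log u$. Cauchy--Schwarz in the $G_t$-inner product gives $\langle \nabla_z \phi, \gamma'(s)\rangle \geqslant -\sqrt{\Gamma^{G_t}(\phi)}\cdot \|G_t^{-1/2}\gamma'(s)\| \geqslant -\sqrt{2X(s)}$, and optimizing pointwise over $\sqrt{X(s)}\geqslant 0$ (that is, completing the square in the quadratic $-(p'/p^2)Y^2 + (\sqrt{2}/p)Y$) yields $-\Psi'(s) \leqslant 1/(2 p'(s)) = T/(2(\alpha-1))$. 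Integrating from $0$ to $T$ then gives $\Psi(0) - \Psi(T) \leqslant T^2/(2(\alpha-1)) = \rho_t^2(x,y)/(2(\alpha-1))$, as required.

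The hardest part will be the technical justification that $\Psi$ is $C^1$ on $[0,T]$ and that the chain-rule computation above can be carried out rigorously. Both issues reduce to differentiating under the explicit Gaussian representation~\eqref{eqn:repden1} with respect to $(p,z)$, which is routine once $f\geqslant \epsilon > 0$ is imposed (making $\log f$ bounded and permitting dominated convergence). The passage from $f+\epsilon$ to arbitrary bounded nonnegative $f$ then follows by dominated convergence, since both sides of~\eqref{eqn:WH} are continuous in $f\in L^\infty(\RR^n;[0,\infty))$ with respect to bounded pointwise convergence.
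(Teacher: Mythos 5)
Your proposal is correct and follows essentially the same route as the paper's proof: interpolate the H\"{o}lder exponent linearly along a subunit curve, differentiate the log-interpolant, invoke \cref{cor:reverse} for the reverse log-Sobolev bound on $\Gamma^{G_t}(\log P_t f^{p(s)})$, apply Cauchy--Schwarz in the $G_t$-metric via subuniticity, and close by a pointwise quadratic optimization. The only cosmetic differences are that you work with $\Psi = \psi/\alpha$ rather than the paper's $\psi(s) = \tfrac{\alpha}{\ell(s)}\log P_t f^{\ell(s)}(\gamma(s))$, you choose the explicit optimal straight-line subunit curve with $T = \rho_t(x,y)$ at the outset rather than fixing a generic subunit curve and optimizing over $T$ afterward, and you phrase the final step as completing the square in $\sqrt{X(s)}$ rather than as Young's inequality $-\sqrt{\Gamma} \geqslant -\tfrac{1}{2\lambda^2}\Gamma - \tfrac{\lambda^2}{2}$ with an optimal choice of $\lambda$; these are identical in content.
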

\begin{proof}
We use a similar proof as in \cite[Proposition 3.4]{BaudoinBonnefont2012}. By standard approximation arguments, we may suppose without loss of generality that $f\in C_\epsilon^\infty(\RR^n; [0, \infty))$ for some $\epsilon >0$.  Let $x,y \in \RR^n$.  Since $G_t$ is strictly positive-definite by \cref{lem:pd}, there exists $T>0$ and a curve $\gamma\in S_{T}^{G_t}(x,y)$.  Let $\ell:[0, T]\rightarrow \RR$ be given by $\ell(s)= 1+ (\alpha -1) \tfrac{s}{T}$ and set
\begin{align*}
\psi(s)= \frac{\alpha}{\ell(s)} \log P_t f^{\ell(s)}(\gamma(s)).
\end{align*}
Then applying \cref{cor:reverse} and using subuniticity of $\gamma$, we find that
\begin{align*}
\psi'(s)&= \frac{\alpha(\alpha-1)}{T \ell^2(s)}\frac{P_t f^{\ell(s)} \log f^{\ell(s)} (\gamma(s)) - \log P_t f^{\ell(s)} (\gamma(s)) P_t f^{\ell(s)}(\gamma(s)) }{P_t f^{\ell(s)} (\gamma(s))} \\
& \qquad +\frac{\alpha}{\ell(s)} \frac{\nabla (P_t f^{\ell(s)})(\gamma(s)) \cdot \gamma'(s) }{P_t f^{\ell(s)} (\gamma(s))} \\
& \geqslant \frac{\alpha(\alpha-1)}{ 2T \ell^2(s)} \Gamma^{G_t}( \log P_t f^{\ell(s)}(\gamma(s)))- \frac{\alpha}{\ell(s)} \sqrt{\Gamma^{G_t}( \log P_t f^{\ell(s)}(\gamma(s)) ) }.
\end{align*}
Now for every $\lambda >0$,
\[
-\sqrt{\Gamma^{G_t}( \log P_t f^{\ell(s)}(\gamma(s)) ) }\geqslant -\frac{1}{2\lambda^2}\Gamma^{G_t}( \log P_t f^{\ell(s)}(\gamma(s)) )-\frac{\lambda^2}{2}.
\]
By choosing $\lambda^2=\frac{T \ell (s)}{(\alpha-1)}$ we thus obtain
\begin{align*}
\psi'(s) \geqslant - \frac{\alpha}{2(\alpha-1)}T
\end{align*}
Integrating the inequality above from $0$ to $T$ we find that
\begin{align*}
\alpha \log P_t f(x)- \log P_tf^\alpha(y) \leqslant \frac{\alpha  T^2}{2(\alpha -1)}.
\end{align*}
Exponentiating both sides of the inequality above and optimizing over all such subunit curves $\gamma$ produces the claimed estimate when $f\in C_\epsilon^\infty(\RR^n; [0, \infty))$, finishing the proof.
\end{proof}
%

Using similar computations to those for Corollary \ref{cor:reverse} and Theorem \ref{thm:wang}, we can also obtain a bound on the total variation distance between the measures $P_t(x, \, \cdot \,)$ and $P_t(y, \, \cdot \,)$ for $x,y \in \RR^n$ using the control distance $\rho_t$. 

\begin{Corollary}\label{TV coro}
Let $t>0$ and suppose that \cref{assump:1} is satisfied.  Then we  have
\begin{align*}
\|P_t(x, \, \cdot \, )- P_t(y, \, \cdot \, ) \|_{TV} \leqslant \rho_t(x,y).  
\end{align*}
for all $x,  y \in \RR^n$.
\end{Corollary}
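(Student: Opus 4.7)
The plan is to reduce the total-variation bound to a gradient estimate via duality, and then to convert that gradient estimate into a distance bound using subunit curves, exactly paralleling the definition of $\rho_t$. Specifically, I would start from the standard representation
\begin{align*}
\|P_t(x,\,\cdot\,) - P_t(y,\,\cdot\,)\|_{TV} = \sup_{\|f\|_\infty \leqslant 1} |P_tf(x) - P_tf(y)|,
\end{align*}
the supremum being over bounded Borel $f$, and reduce by approximation to smooth $f$ with $|f|\leqslant 1$. For such $f$, the function $P_tf$ is $C^\infty$ on $\RR^n$ by the Mehler-type representation~\eqref{eqn:repden1} (see~\cref{rem:just}), so chain-rule computations along curves are automatic.

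The first substantive step is a uniform pointwise gradient bound. For $|f|\leqslant 1$ one has $P_tf^2 \leqslant 1$ and $(P_tf)^2 \geqslant 0$, so the reverse Poincar\'e inequality of \cref{prop:L2smoothing} yields
\begin{align*}
\Gamma^{G_t}(P_tf) \leqslant P_tf^2 - (P_tf)^2 \leqslant 1
\end{align*}
pointwise on $\RR^n$; equivalently, $\sqrt{\Gamma^{G_t}(P_tf)} \leqslant 1$ everywhere.

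The second step is to transport this bound to a difference bound along subunit curves. Given $T > \rho_t(x,y)$, pick $\gamma \in S_T^{G_t}(x,y)$, whose existence is guaranteed by the definition of $\rho_t$. By the fundamental theorem of calculus, the subunit condition on $\gamma$, and the pointwise gradient bound above,
\begin{align*}
|P_tf(x) - P_tf(y)| \leqslant \int_0^T \sqrt{\Gamma^{G_t}(P_tf)(\gamma(s))}\,ds \leqslant T.
\end{align*}
Letting $T \downarrow \rho_t(x,y)$ gives $|P_tf(x) - P_tf(y)| \leqslant \rho_t(x,y)$, and then taking the supremum over admissible $f$ finishes the argument.

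There is really no serious obstacle here; the only items requiring a bit of care are the density/approximation argument that extends the bound from smooth $f$ to general bounded measurable $f$, and confirming the TV-norm convention (duality against $\|f\|_\infty \leqslant 1$) so that the right-hand constant is $1$ rather than $1/2$. Both are routine, and both rely on the smoothing properties of $P_t$ captured in~\cref{rem:just}.
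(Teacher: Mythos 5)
Your proposal is correct and follows the same essential route as the paper: both reduce to duality against $\|f\|_\infty\leqslant 1$, and both apply the reverse Poincar\'e inequality of \cref{prop:L2smoothing} to get the uniform bound $\sqrt{\Gamma^{G_t}(P_tf)}\leqslant 1$. The only cosmetic difference is in the transport step: the paper applies the mean value theorem to obtain a single intermediate point $\xi$ and then Cauchy--Schwarz with the explicit formula $\rho_t(x,y)=\|G_t^{-1/2}(x-y)\|$, whereas you integrate $\tfrac{d}{ds}P_tf(\gamma(s))$ along a subunit curve and take the infimum over admissible horizons $T$. In the linear setting of~\eqref{eqn:contdist} these are equivalent (the optimal curve is the straight segment and the MVT point lies on it), but your phrasing has the mild advantage of not relying on the explicit closed form for $\rho_t$ and would transfer verbatim to settings where the control distance is defined only variationally.
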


\begin{proof}
Let $f\in  L^\infty(\RR^n)$ with $\|f\|_{L^\infty} \leqslant  1$ and observe that by~\cref{prop:L2smoothing} we  have, for some $\xi \in \RR^n$,
\begin{align*}
|P_t f(x)- P_t f(y) |= | \nabla (P_t f)(\xi) \cdot (x-y) | &= |  G_t^{1/2} \nabla (P_t f)(\xi) \cdot G_{t}^{-1/2}(x-y)| \\
&\leqslant \sqrt{\Gamma^{G_t}(P_t f)(\xi)} \rho_t(x,y) \leqslant \rho_t(x,y),
\end{align*}
where in the final inequality we used~\eqref{eqn:contdist} and \cref{prop:L2smoothing}.    
\end{proof}

\subsection{Transportation cost inequalities}\label{s.TCI}

Corollary \ref{TV coro} can be improved by using the Wasserstein(-Kantorovich-Rubinstein) distance in addition to the total variation distance. Indeed, functional inequalities like in Proposition \ref{prop:L2smoothing} and \ref{cor:reverse} imply transportation cost inequalities as in \cite{BaudoinEldredge2021}.

First recall the definitions of the Hellinger and $2$-Wasserstein distances. Denote by $\mathcal{P}\left( \RR^{n}\right)$ the set of Borel probability measures on $\RR^{n}$ and by $\mathcal{P}^{2}\left( \RR^{n}\right)\subset \mathcal{P}\left( \RR^{n}\right)$ the space of probability measures having a finite second moment.

For $\mu, \nu \in \mathcal{P}^{2}\left( \RR^{n}\right)$ the \emph{$2$-Wasserstein distance} $W_{2}$ is defined by
\[
W_{2}\left( \mu, \nu \right)^2:= \inf \int_{\RR^{n} \times \RR^{n}}
\vert x-y \vert^{2} \pi\left( dx, dy \right),
\]
where the infimum is taken over all couplings $\pi \in \mathcal{P}\left( \RR^{n} \times\RR^{n} \right)$ with marginals $\mu, \nu$.

We will also consider the  \emph{$2$-Wasserstein distance} $W^{\rho_t}_{2}$ for  the control distance associated to the matrix $G_t$ as in~\eqref{eqn:G_t}, which is defined by
\[
W^{\rho_t}_{2}\left( \mu, \nu \right)^2:= \inf \int_{\RR^{n} \times \RR^{n}}
\rho_t(x,y)^{2} \pi\left( dx, dy \right),
\]
where the infimum is taken over all couplings $\pi \in \mathcal{P}\left( \RR^{n} \times\RR^{n} \right)$ with marginals $\mu, \nu$.  Letting  
\begin{align}
\label{def:lam}
\lambda(t) := \inf_{|x|=1} \langle G_t x, x\rangle,
\end{align} 
we observe that \cref{assump:1} implies $\lambda(t)>0$ for all $t>0$.  Furthermore,
Note that
\[
W^{\rho_t}_{2}\left( \mu, \nu \right)^2 \le \frac{1}{ \lambda(t)} W_2(\mu_0, \mu_1)^2,\qquad \mu_0, \mu_1 \in \mathcal{P}(\RR^n).
\]

The \emph{$2$-Hellinger distance} $\mathsf{He}_{2}$ is defined by
\[
\mathsf{He}_{2}^{2}\left( \mu, \nu \right):=
\int_{\RR^{n}} \left( \sqrt{\frac{d \mu}{dm}}-\sqrt{\frac{d \nu}{dm}}\right)^{2}\,dm
\]
where $m$ is any measure that $\mu, \nu$  are both absolutely continuous with respect to (for example, $m=\mu+\nu$). The definition of $\mathsf{He}_{2}$ is independent of $m$. Convergence in the Hellinger distance is equivalent to convergence in total variation, and $\mathsf{He}_{2}^{2}\left( \mu, \nu \right) \leqslant 2$  for all $\mu, \nu \in \mathcal{P}\left( \RR^{n}\right)$, with $\mathsf{He}_{2}^{2}\left( \mu, \nu \right) =2$ if and only if $\mu$ and $\nu$ are mutually singular.

\begin{Proposition}
 Let $t>0$ and suppose that \cref{assump:1} is satisfied.   Then, we have
  \begin{equation*}
    \mathsf{He}_2(\mu_0 P_t, \mu_1 P_t)^2 \leqslant     \frac{1}{ 4} W^{\rho_t}_2(\mu_0, \mu_1)^2,\qquad \mu_0, \mu_1 \in \mathcal{P}(\RR^n).
     \end{equation*}

 \end{Proposition}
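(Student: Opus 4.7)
The plan is to reduce the inequality to the pointwise Hellinger bound
\[
\mathsf{He}_2^2(P_t(x,\cdot), P_t(y,\cdot)) \leqslant \tfrac{1}{4}\rho_t(x,y)^2, \qquad x, y\in \RR^n,
\]
and then lift it using joint convexity of the squared Hellinger distance together with an optimal coupling for $W_2^{\rho_t}$. Joint convexity is a consequence of the fact that $(p,q)\mapsto(\sqrt{p}-\sqrt{q})^2$ is jointly convex on $[0,\infty)^2$ (its Hessian on $(0,\infty)^2$ has nonnegative trace and vanishing determinant, hence is positive semi-definite). Given any coupling $\pi\in\mathcal{P}(\RR^n\times \RR^n)$ of $(\mu_0,\mu_1)$, one writes $\mu_0 P_t =\int P_t(x,\cdot)\,\pi(dx,dy)$ and $\mu_1 P_t=\int P_t(y,\cdot)\,\pi(dx,dy)$, both absolutely continuous with respect to Lebesgue measure by \cref{lem:pd}, and Jensen's inequality applied to the convex integrand followed by Fubini yields
\[
\mathsf{He}_2^2(\mu_0 P_t,\mu_1 P_t)\leqslant \int\mathsf{He}_2^2(P_t(x,\cdot),P_t(y,\cdot))\,\pi(dx,dy).
\]
Combining with the pointwise bound and optimizing over $\pi$ produces the claimed estimate.

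For the pointwise bound, fix $x,y\in \RR^n$, let $\gamma\in S_T^{G_t}(x,y)$ be a subunit curve, and write $p_t(\cdot,\cdot)$ for the smooth, strictly positive Gaussian density of $P_t$ given by~\eqref{eqn:tdens}. Differentiating $\sqrt{p_t(\gamma(s),z)}$ in $s$ along the curve yields
\[
\sqrt{p_t(x,z)}-\sqrt{p_t(y,z)}=-\tfrac{1}{2}\int_0^T\sqrt{p_t(\gamma(s),z)}\,\bigl(\gamma'(s)\cdot\nabla_x\log p_t(\gamma(s),z)\bigr)\,ds,
\]
where $\nabla_x$ denotes the gradient in the initial condition. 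Cauchy--Schwarz in $s$ followed by Fubini in $z$ then give
\[
\mathsf{He}_2^2(P_t(x,\cdot),P_t(y,\cdot))\leqslant \frac{T}{4}\int_0^T \E_{P_t(\gamma(s),\cdot)}\bigl[(\gamma'(s)\cdot\nabla_x\log p_t(\gamma(s),\cdot))^2\bigr]\,ds.
\]

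The crux is to bound this Fisher-information-type integrand by $\gamma'(s)^\top G_t^{-1}\gamma'(s)$, at which point subuniticity gives $\leqslant 1$ and hence $\mathsf{He}_2^2\leqslant T^2/4$; taking the infimum over $T$ via \eqref{eqn:contdist} produces the pointwise bound. To prove this score bound, I would argue by duality built on \cref{prop:L2smoothing}. Differentiation under the integral sign yields $\nabla_x P_t f(x)=\E_{P_t(x,\cdot)}[f\cdot\nabla_x\log p_t(x,\cdot)]$, and the identity $\E_{P_t(x,\cdot)}[\nabla_x\log p_t(x,\cdot)]=0$ follows from differentiating $\int p_t(x,z)\,dz=1$. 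Consequently, for any $v\in\RR^n$, the variational characterization of the variance gives
\[
\E_{P_t(x,\cdot)}\bigl[(v\cdot\nabla_x\log p_t(x,\cdot))^2\bigr]=\sup_f\frac{(v\cdot\nabla_x P_t f(x))^2}{P_t f^2(x)-(P_t f(x))^2}.
\]
Cauchy--Schwarz in the $G_t^{1/2},G_t^{-1/2}$ splitting gives $(v\cdot\nabla_x P_t f)^2\leqslant(v^\top G_t^{-1} v)\,\Gamma^{G_t}(P_t f)$, and the reverse Poincar\'e inequality $\Gamma^{G_t}(P_t f)\leqslant P_t f^2-(P_t f)^2$ from \cref{prop:L2smoothing} then delivers the desired bound $v^\top G_t^{-1} v$.

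The main obstacle I anticipate is rigorously justifying the differentiation and Fubini exchanges, particularly the integrability of $|\nabla_x p_t|^2/p_t$ with respect to $dz$. These should follow from the explicit Gaussian form \eqref{eqn:tdens}, which yields uniform-in-$z$ exponential decay for all the relevant quantities so that standard dominated convergence arguments apply uniformly in $s$ on compact subintervals of $(0,T)$, with boundary terms at the endpoints handled by a limiting argument as in the proof of \cref{prop:L2smoothing}.
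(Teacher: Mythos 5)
Your proof is correct, but it takes a genuinely different route from the paper's. The paper proves this Proposition in one line, citing \cref{prop:L2smoothing} together with the general metric-measure result \cite[Theorem 3.7]{BaudoinEldredge2021}, which packages the implication ``reverse Poincar\'e $\Rightarrow$ Hellinger--Wasserstein transport inequality.'' You instead unfold that implication directly in the present Gaussian setting. Your argument has three moving parts: (1)~joint convexity of $(p,q)\mapsto(\sqrt p-\sqrt q)^2$ plus Jensen over an arbitrary coupling reduces the claim to the pointwise estimate $\mathsf{He}_2^2(P_t(x,\cdot),P_t(y,\cdot))\leqslant\tfrac14\rho_t(x,y)^2$; (2)~integrating $\sqrt{p_t(\gamma(s),z)}$ along a subunit curve $\gamma$ and applying Cauchy--Schwarz reduces that to a score-variance bound $\E_{P_t(x,\cdot)}\bigl[(v\cdot\nabla_x\log p_t(x,\cdot))^2\bigr]\leqslant v^\top G_t^{-1}v$; and (3)~this score bound follows from the variational characterization of variance, the $G_t^{1/2}/G_t^{-1/2}$ splitting, and the reverse Poincar\'e bound of \cref{prop:L2smoothing}. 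Each step checks out; in particular the sup in your variational characterization can be restricted to $L^\infty$ test functions (truncating the score and passing to the limit), which is where \cref{prop:L2smoothing} applies, and the Fubini/differentiation exchanges you flag are harmless because of the explicit Gaussian form \eqref{eqn:tdens}. What your approach buys is a self-contained proof that makes the role of the reverse Poincar\'e inequality explicit and avoids the black-box citation; what it obscures slightly is that in this linear-Gaussian model the score bound is in fact an \emph{equality}: from $\Sigma_t=e^{tA}G_t\,e^{tA^\ast}$ one gets $(e^{tA})^\ast\Sigma_t^{-1}e^{tA}=G_t^{-1}$, so that $\E_{P_t(x,\cdot)}\bigl[(v\cdot\nabla_x\log p_t(x,\cdot))^2\bigr]=v^\top G_t^{-1}v$ exactly. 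The duality route is still the right one if one wants an argument in the spirit of \cite{BaudoinEldredge2021} that generalizes beyond the Gaussian case.
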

\begin{proof}
This follows from Proposition \ref{prop:L2smoothing} and \cite[Theorem 3.7]{BaudoinEldredge2021}.
\end{proof}

Let $\mathrm{Lip}_b(\RR^n)$ denote the space of all bounded Lipschitz functions on $\RR^n$, and for  $a,b \geqslant 0$, let $\mathcal{E}_{a,b}$ denote the class of all
  \emph{positive}   functions $\varphi \in C^1([0,1], \mathrm{Lip}_b(\RR^n))$,
  bounded and bounded away from $0$, satisfying the
  differential inequality
  \begin{equation*}
    \partial_s \varphi_s + a \varphi_s \Gamma^{G(t)}( \ln \varphi_s)  + b
    \varphi_s \ln \varphi_s \leqslant 0.
  \end{equation*}
For probability measures $\mu_0, \mu_1 \in \mathcal{P}(\RR^n)$, we define
  \begin{equation*}
    T_{a,b}(\mu_0, \mu_1) := \sup_{\varphi \in \mathcal{E}_{a,b}}
      \left[\int_{\RR^n} \varphi_1\,d\mu_1 - \int_{\RR^n} \varphi_0\,d\mu_0\right].
  \end{equation*}
 Provided $b>0$, note by \cite[Proposition 5.11]{BaudoinEldredge2021} we have
  \begin{equation*}
    T_{0,b}(\mu_0, \mu_1) = \begin{cases} C_b \int_{\RR^n}
      \left(\frac{d\mu_1}{d \mu_0}\right)^{\frac{e^b}{e^b-1}}\,d\mu_0, & \mu_1 \ll
      \mu_0 \\
\infty, & \mu_1 \not\ll \mu_0
\end{cases}
  \end{equation*}
  where $C_b=\frac{e^b-1}{e^{be^b}}$. 

\begin{Theorem}
\label{thm:transport}
Let $t>0$ and suppose that \cref{assump:1} is satisfied. Then
 \begin{equation}\label{eti}
 T_{0, \kappa / \lambda(t)}(\mu_0 P_t, \mu_1 P_t) \leqslant T_{\kappa, \kappa / \lambda(t)}(\mu_0,
        \mu_1), \qquad \mu_0, \mu_1 \in \mathcal{P}(\RR^n), \quad \kappa >0.
      \end{equation}
In particular, for every $x,y \in \RR^n$ and $t>0$
 \begin{equation*}
      \int_{\RR^n} \left(\frac{p_t(x,z)}{p_t(y,z)}\right)^{1/(p-1)} p_t (x,z)\, dz  \leqslant       \exp\left(\frac{p}{(p-1)^2} \frac{ \rho_t(x,y)^2}{2}\right), \qquad
      p > 1.
    \end{equation*}
\end{Theorem}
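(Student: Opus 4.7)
The plan is to derive the transportation cost inequality \eqref{eti} via a duality argument in the spirit of \cite[Theorem 5.12]{BaudoinEldredge2021}, with the reverse log-Sobolev bound of \cref{cor:reverse} supplying the crucial pointwise inequality; the explicit integral bound on the transition densities will then follow by specializing \eqref{eti} to Dirac masses and applying the $L^p$-entropy characterization of $T_{0,b}$ recalled just above the statement.

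For \eqref{eti}, I would begin with the variational definition: the left-hand side is the supremum over $\varphi \in \mathcal{E}_{0, \kappa/\lambda(t)}$ of
\begin{align*}
\int_{\RR^n} \varphi_1 \, d(\mu_1 P_t) - \int_{\RR^n} \varphi_0 \, d(\mu_0 P_t) = \int_{\RR^n} P_t \varphi_1 \, d\mu_1 - \int_{\RR^n} P_t \varphi_0 \, d\mu_0,
\end{align*}
so it suffices to verify that the composed curve $s \mapsto P_t \varphi_s$ lies in $\mathcal{E}_{\kappa, \kappa/\lambda(t)}$, which by definition of $T_{\kappa, \kappa/\lambda(t)}$ bounds this quantity by $T_{\kappa, \kappa/\lambda(t)}(\mu_0, \mu_1)$. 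Commuting $\partial_s$ with the spatial action of $P_t$, applying the hypothesized inequality $\partial_s \varphi_s + (\kappa/\lambda(t)) \varphi_s \log \varphi_s \leqslant 0$, and bounding the entropy defect $P_t(\varphi_s \log \varphi_s) - (P_t \varphi_s) \log (P_t \varphi_s)$ from below by \cref{cor:reverse} produces the required pointwise differential inequality for $P_t \varphi_s$. The factor $1/\lambda(t)$ appearing in $b = \kappa/\lambda(t)$ is what absorbs the numerical constant $2$ in the reverse log-Sobolev of \cref{cor:reverse} together with the scale comparison $\Gamma^{G_t}(f) \geqslant \lambda(t) |\nabla f|^2$ coming from \eqref{def:lam}; the careful constant bookkeeping follows \cite[Theorem 5.12]{BaudoinEldredge2021}, which is invoked directly.

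For the integral inequality, I would specialize \eqref{eti} to $\mu_0 = \delta_y$ and $\mu_1 = \delta_x$, and set $\kappa := \lambda(t) \log p$ so that $b := \kappa/\lambda(t) = \log p$ and $e^b / (e^b - 1) = p/(p-1)$. By \cref{lem:pd} both $P_t(x, \, \cdot \,)$ and $P_t(y, \, \cdot \,)$ have smooth positive densities, so $d P_t(x, \, \cdot \,)/d P_t(y, \, \cdot \,) = p_t(x, \, \cdot \,)/p_t(y, \, \cdot \,)$; the formula for $T_{0, b}$ recalled above the statement, combined with the elementary identity $(a/b)^{p/(p-1)} b = (a/b)^{1/(p-1)} a$, then gives
\begin{align*}
T_{0, b}(\delta_y P_t, \delta_x P_t) = C_b \int_{\RR^n} \bigg(\frac{p_t(x,z)}{p_t(y,z)}\bigg)^{1/(p-1)} p_t(x,z) \, dz.
\end{align*}
It remains to bound $T_{\kappa, b}(\delta_y, \delta_x) = T_{\kappa, b}(\delta_x, \delta_y)$ (by symmetry of $\rho_t$) by $C_b \exp(p \rho_t(x,y)^2 / [2(p-1)^2])$; I would do this with the ansatz $\log \varphi_s(z) = -\alpha(s) \rho_t(z, y)^2 + \beta(s)$. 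The eikonal identity $\Gamma^{G_t}(\rho_t(\cdot, y)^2) = 4 \rho_t(\cdot, y)^2$, immediate from \eqref{eqn:contdist}, reduces the Hamilton-Jacobi equation characterizing extremizers of $\mathcal{E}_{\kappa, b}$ to a Bernoulli ODE for $\alpha$ and a linear ODE for $\beta$, both solved explicitly; evaluating $\varphi_1(y) - \varphi_0(x)$ and optimizing over the initial data then yields the desired bound.

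The main obstacle is the constants bookkeeping in both steps: in the duality step, identifying how the reverse log-Sobolev constant and the scale constant $\lambda(t)$ combine to produce the precise pair of coefficients $\kappa$ and $\kappa/\lambda(t)$ (this is the technical heart of \cite[Theorem 5.12]{BaudoinEldredge2021}); and in the specialization step, verifying that the Bernoulli ODE solution together with the prefactor $C_b$ combine to give exactly the exponent $p/[2(p-1)^2]$ in the final bound. Neither step requires analytic input beyond \cref{cor:reverse} and the eikonal identity for $\rho_t$.
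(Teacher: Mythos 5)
Your derivation of \eqref{eti} is consistent with the paper's: both invoke Corollary~\ref{cor:reverse} together with the abstract duality machinery from \cite{BaudoinEldredge2021} (you cite Theorem~5.12; the paper cites Theorem~5.15). That part is fine.

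For the integrated Harnack estimate, however, you take a genuinely different route, and it has a gap. The paper does \emph{not} deduce the second display from \eqref{eti}; it recognizes the Wang--Harnack inequality \eqref{eqn:WH} as the statement $(P_t f(x))^p \leqslant C^p P_t f^p(y)$ with $C = \exp(\rho_t^2(x,y)/[2(p-1)])$, and then invokes the classical, elementary equivalence between Wang--Harnack and integrated Harnack inequalities (\cite[Lemma D1]{DriverGordina2009}, \cite[Lemma 2.11]{BaudoinGordinaMelcher2013}, \cite[Proposition 4.1]{Gordina2017}), which immediately gives $\int (p_t(x,z)/p_t(y,z))^{p/(p-1)} p_t(y,z)\,dz \leqslant C^{p/(p-1)}$, i.e.\ the stated bound. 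Your route instead identifies the left side with $T_{0,\log p}(\delta_y P_t, \delta_x P_t)/C_{\log p}$ (the algebra $(a/b)^{p/(p-1)} b = (a/b)^{1/(p-1)} a$ and the value $e^b/(e^b-1) = p/(p-1)$ for $b = \log p$ are correct), applies \eqref{eti}, and then needs an \emph{upper} bound on $T_{\kappa,\log p}(\delta_y,\delta_x)$. This is where the argument breaks: $T_{a,b}$ is a supremum, so plugging in the Hamilton--Jacobi ansatz and ``optimizing over initial data'' can only furnish a \emph{lower} bound. To get the upper bound you would need a comparison principle showing every $\varphi\in\mathcal{E}_{\kappa,b}$ is dominated by the Hopf--Lax solution, which is a nontrivial additional step that you do not supply. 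Moreover, the proposed ansatz $\log\varphi_s(z) = -\alpha(s)\rho_t(z,y)^2 + \beta(s)$ with $\alpha(s)\neq 0$ does not actually belong to $\mathcal{E}_{\kappa,b}$: for $\alpha(s)>0$ the function $\varphi_s$ decays to $0$ as $\|z\|\to\infty$, so it is not bounded away from $0$, and for $\alpha(s)<0$ it is unbounded. Some truncation and approximation would be required. Your eikonal computation $\Gamma^{G_t}(\rho_t(\cdot,y)^2) = 4\rho_t(\cdot,y)^2$ and the reduction to a Bernoulli ODE for $\alpha$ and a linear ODE for $\beta$ are correct as far as they go, but they do not by themselves close the argument. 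The paper's approach sidesteps all of this.
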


\begin{proof}
	The entropic transportation inequality \eqref{eti} follows from Corollary \ref{cor:reverse} and  \cite[Theorem 5.15]{BaudoinEldredge2021} since we have Wang-Harnack inequality \eqref{eqn:WH} for any $p >1$
\[
(P_t f (x))^{p} \leqslant C^{p} P_t f^{p}(y) ,
\]
with
	\[ C:=\exp\left( \frac{ \rho_{t}^2(x,y)}{2(p -1)}\right).
	\]
As was observed in different settings in \cite[Lemma D1]{DriverGordina2009}, \cite[Lemma 2.11]{BaudoinGordinaMelcher2013} and \cite[Proposition 4.1]{Gordina2017}, this is equivalent to the integrated Harnack inequality with $p^{\prime}=\frac{p}{p -1}$:
\begin{align*}
\int_{\RR^n} \left(\frac{p_t(x,z)}{p_t(y,z)}\right)^{p^{\prime}} p_t (x,z)\, dz  
	\leqslant C^{p^{\prime}}&=\exp\left( \frac{ p^{\prime} \rho_{t}^2(x,y)}{2(p -1)}\right)
 =\exp\left(\frac{p}{(p-1)^2} \frac{ \rho_{t}^2(x,y)}{2}\right).
\end{align*}

\end{proof}

 \begin{Remark}
 We now observe that \cref{thm:transport} implies a quasi invariance result (see also~\cite{Gordina2017}).  Relying on \cite[Lemma 5.10]{BaudoinEldredge2021} which says that if for $b>0$ we have $T_{0, b}(\mu_{0}, \mu_{1}) < \infty$, then $\mu_{0}$ is absolutely continuous with respect to $\mu_{1}$. Taking in~\eqref{eti} with $\mu_{0}=\delta_{x}$ and  $\mu_{1}=\delta_{y}$, then $T_{0, \kappa / \lambda(t)}(\delta_{x} P_t, \delta_{y} P_t) < \infty$ and therefore $\delta_{x} P_t$ is absolutely continuous with respect to  $\delta_{y} P_t$ and by symmetry $\delta_{y} P_t$ is absolutely continuous with respect to  $\delta_{x} P_t$.
\end{Remark}

\subsection{The iterate $G_2$ of $G$}
Before proceeding onto concrete applications of the matrix $G(s,t)$ as in~\eqref{eqn:Gstdef}, in this section, we discuss the \emph{iterate} $G_2$ of this matrix.  From a practical standpoint, the iterate $G_2$ allows further stochastic mixing while retaining some structure similar to $G_t$. In particular in some applications, it will allows us to more easily control the smallest positive eigenvalue $\lambda(t)$ of the matrix $G_t$ introduced in~\eqref{def:lam}.

Fixing $t>0$, we define the \emph{iterate} $s\mapsto G_2(s,t)$ of the matrix~\eqref{eqn:Gstdef} on the interval $[0,t]$ by the formula
\begin{align*}
G_2(s,t):= \int_s^t e^{(s-v)A} G(v,t)e^{(s-v) A^{\ast}} \, dv.
\end{align*}
Observe that $s\mapsto G_2(s,t)$ is the unique solution of the backwards ODE
\begin{align*}
\begin{cases}
\partial_s G_2(s,t) = - G(s,t) + A G_2(s,t) + G_2(s,t) A^{\ast}\\
G_2(t,t)=0.
\end{cases}
\end{align*}
Although we could re-do the Bakry-\'{E}mery calculus used in this section with $G_2(s,t)$ in place of $G(s,t)$ to produce similar functional inequalities, we find it more expedient to compare their respective spectra.  In particular, our main result in this section is~\cref{prop:G_2} below.

To this end, for $t>0$, let $\lambda_{2}(t):= \inf_{|x|=1} \langle G_2(0,t) x,x \rangle$ be the smallest eigenvalue of the matrix $G_{2,t}:=G_2(0,t)$.  We have the following:

\begin{Proposition}
\label{prop:G_2}
For $t>s>0$, we have the bound
\begin{align}
\label{eqn:evbound}
\lambda(t) \geqslant \frac{\lambda_{2}(t)}{t}\geqslant \frac{\lambda(t-s)}{t}  \inf_{|x|=1} \int_0^{s} |e^{-vA^{\ast}} x|^2 \, dv.
\end{align}
\end{Proposition}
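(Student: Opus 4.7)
The plan is to unfold the definition of $\langle G_{2,t}x,x\rangle$ via Fubini after first noticing that the inner matrix $G(v,t)$ depends on $v$ only through the difference $t-v$. Indeed, the change of variables $w=u-v$ in the defining integral of $G(v,t)$ yields $G(v,t)=G_{t-v}$ in the notation of~\eqref{eqn:G_t}, so that
\begin{align*}
\langle G_{2,t}x,x\rangle=\int_0^t\langle G_{t-v}e^{-vA^{\ast}}x, e^{-vA^{\ast}}x\rangle\,dv=\int_0^t\!\!\int_v^t|\sigma^{\ast}e^{-uA^{\ast}}x|^2\,du\,dv.
\end{align*}
Interchanging the order of integration produces the weighted identity
\begin{align*}
\langle G_{2,t}x,x\rangle=\int_0^t u\,|\sigma^{\ast}e^{-uA^{\ast}}x|^2\,du,
\end{align*}
which is to be compared against $\langle G_tx,x\rangle=\int_0^t|\sigma^{\ast}e^{-uA^{\ast}}x|^2\,du$.

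For the first inequality, I would bound $u\leqslant t$ inside the weighted identity to get $\langle G_{2,t}x,x\rangle\leqslant t\langle G_tx,x\rangle$, i.e.\ $G_{2,t}\leqslant t\,G_t$ in the Loewner order. Taking the infimum over unit vectors then yields $\lambda_2(t)\leqslant t\lambda(t)$, which is exactly the first asserted bound after dividing by $t$.

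For the second inequality, I would instead restrict the outer integral in the representation $\langle G_{2,t}x,x\rangle=\int_0^t\langle G_{t-v}e^{-vA^{\ast}}x,e^{-vA^{\ast}}x\rangle\,dv$ to the subinterval $[0,s]$ and exploit the monotonicity $r\mapsto G_r$ in the Loewner order, which is immediate from the fact that $G_r-G_{r'}=\int_{r'}^r e^{-wA}\sigma\sigma^{\ast}e^{-wA^{\ast}}\,dw\geqslant 0$ for $r\geqslant r'\geqslant 0$. Since $t-v\geqslant t-s$ for $v\in[0,s]$, this gives $G_{t-v}\geqslant G_{t-s}$ and hence
\begin{align*}
\langle G_{2,t}x,x\rangle\geqslant\int_0^s\langle G_{t-s}e^{-vA^{\ast}}x,e^{-vA^{\ast}}x\rangle\,dv\geqslant\lambda(t-s)\int_0^s|e^{-vA^{\ast}}x|^2\,dv.
\end{align*}
Taking the infimum over unit $x$ and dividing by $t$ yields the claimed bound.

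I do not anticipate significant obstacles: the whole argument rests on the translation identity $G(v,t)=G_{t-v}$ (which effectively reduces the iterate to a one-parameter family), a single application of Fubini, and the elementary monotonicity of $r\mapsto G_r$ in the Loewner order. The only point requiring a little care is keeping track of the two slightly different uses of the representation of $\langle G_{2,t}x,x\rangle$ — the Fubini form for the upper bound and the unsymmetrized form for the lower bound — but no deeper estimate is needed.
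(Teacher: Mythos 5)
Your proof is correct and follows essentially the same route as the paper: both proofs unfold $\langle G_{2,t}x,x\rangle$ into the double integral $\int_0^t\int_v^t|\sigma^{\ast}e^{-wA^{\ast}}x|^2\,dw\,dv$, bound it above by extending the inner integral (your Fubini reformulation with weight $u\leqslant t$ is equivalent), and bound it below by restricting to $v\in[0,s]$ and using $\langle G(v,t)y,y\rangle\geqslant\lambda(t-s)|y|^2$. Your explicit appeal to the translation identity $G(v,t)=G_{t-v}$ and Loewner monotonicity of $r\mapsto G_r$ is a clean way to phrase what the paper does by direct computation, but the substance is identical.
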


\begin{proof}
Fix $t>0$ and $x\in \RR^n$.  We first prove the inequality $\lambda(t) \geqslant \lambda_{2}(t)/t$.  Observe that
\begin{align*}
\langle G_{2,t} x,x \rangle = \int_0^t \langle G(v,t) e^{-v A^{\ast}} x, e^{-v A^{\ast} }x\rangle\, dv &= \int_0^t \int_v^t | \sigma^{\ast} e^{(v-w)A^{\ast}} e^{-vA^{\ast}} x|^2 \, dw dv\\
&\leqslant \int_0^t \int_0^t | \sigma^{\ast} e^{-wA^{\ast}} x|^2 \, dw \, dv= t \langle G_t x,x \rangle.
\end{align*}
Taking the infimum over $x\in \RR^n$ with $|x|=1$ implies $\lambda(t) \geqslant \lambda_{2}(t)/t$.

For the remaining bound, fix $s>0$ with $s<t$.  For $x\in \RR^n$, since $G(v,t)$ is nonnegative, we have that
\begin{align}
\nonumber \langle G_{2,t} x,x \rangle &= \int_0^t  \langle G(v,t) e^{-v A^{\ast}} x, e^{-v A^{\ast}} x\rangle\, dv \\
\label{eqn:Glb1}&\geqslant \int_0^{s}  \langle G(v,t) e^{-v A^{\ast}} x, e^{-v A^{\ast} }x\rangle\, dv.\end{align}
Now, for $v\in [0, s]$ we have for any $x\in \RR^n$
\begin{align}
\label{eqn:Glb2}
 \langle G(v,t)  x, x\rangle = \int_v^t |\sigma^{\ast} e^{(v-w)A^{\ast}} x|^2 \, dw= \int_0^{t-v} |\sigma^{\ast} e^{-w A^{\ast}} x|^2 \, dw\geqslant \lambda(t-s) |x|^2.
 \end{align}
 Combining \eqref{eqn:Glb1} with \eqref{eqn:Glb2} produces
 \begin{align*}
  \langle G_{2,t} x,x \rangle \geqslant \lambda(t-s) \int_0^{s}  |e^{-v A^{\ast}} x|^2 \, dv,
   \end{align*}
   which implies the remaining inequality.
\end{proof}

\begin{Remark}
The bound~\eqref{eqn:evbound} allows to utilize the underlying dynamics driven by the ODE $\dot{x}= - A^{\ast} x$ to lower bound $\lambda(t)$.  In practice, dissipative dynamics for the system $\dot{x}=A^{\ast} x$ leads to explosive dynamics for the time-reversed system $\dot{x}= -A^{\ast} x$.  In such cases, it is often easy to show explicitly that
\begin{align*}
\int_0^{s} |e^{-v A^{\ast}} x|^2 \, dv
\end{align*}
grows exponentially fast as $s\rightarrow \infty$.  We refer the reader to \cref{sec:oscdamp} for a concrete example.
\end{Remark}

\section{Estimating the distance $\rho_t$ in examples}
\label{sec:class}

The goal of this section is to bound the control distance $\rho_t$ as in~\eqref{def:rho_t} from above in a number of conrete examples in the form~\eqref{eqn:SDE} where the noise is degenerate but \cref{assump:1} is satisfied.  Of particular importance will be to produce explicit estimates depending on key parameters in the specific system. Looking ahead to the following section, how the estimates depend on the spatial dimension parameter will be critical as we consider an infinite-dimensional class of examples related to the examples treated in this section.

First, in~\cref{sub:genstruct}, we outline the general form of each example and use it to deduce a basic uppers bounds for $\rho_t$ in terms of the \emph{underlying matrices} defined below.  Importantly, the matrices $A$ and $\sigma$ will be written in tensored form from which some simplifications can be deduced.  Then, in~\cref{sub:ex}, we consider four, specific examples where  we  estimate $\rho_t$ using the underlying matrices via \cref{prop:tensor} below.

\subsection{Kronecker product form of the examples}
\label{sub:genstruct}
Letting $j,k\in \N$, $I=I_{k\times k}$ denote the $k\times k$ identity matrix and $Q$ denote a $k\times k$, symmetric strictly positive-definite matrix, throughout this section $A$ and $\sigma$ are $n\times n$ matrices, $n=jk$, of the following form
\begin{align}
\label{eqn:exAsig}
A=  \begin{bmatrix}
a_{11} I & a_{12} I &  \ldots & a_{1j} I\\
a_{21} I & a_{22} I & \ldots & a_{2j} I \\
\vdots & \vdots & \ldots & \vdots \\
a_{j1} I & a_{j2} I & \ldots & a_{jj} I
\end{bmatrix} \,\,\, \,\,\text{ and } \,\,\,\, \, \sigma=  \begin{bmatrix}
\sigma_{11} Q^{1/2} & \sigma_{12} Q^{1/2} &  \ldots & \sigma_{1j} Q^{1/2}\\
\sigma_{21} Q^{1/2} & \sigma_{22} Q^{1/2} & \ldots & \sigma_{2j} Q^{1/2} \\
\vdots & \vdots & \ldots & \vdots \\
\sigma_{j1} Q^{1/2} & \sigma_{j2} Q^{1/2} & \ldots & \sigma_{jj} Q^{1/2}
\end{bmatrix}.\end{align}
To $A$ and $\sigma$, we associate $j\times j$ real matrices $\underline{A}$ and $\underline{\sigma}$, called the \emph{underlying matrices} corresponding to $A$ and $\sigma$, given by
  \begin{align*}
\underline{A}=  \begin{bmatrix}
a_{11} & a_{12}  &  \ldots & a_{1j} \\
a_{21}  & a_{22}  & \ldots & a_{2j}  \\
\vdots & \vdots & \ldots & \vdots \\
a_{j1} & a_{j2}  & \ldots & a_{jj}
\end{bmatrix} \qquad \text{and} \qquad \underline{\sigma}=  \begin{bmatrix}
\sigma_{11}  & \sigma_{12}  &  \ldots & \sigma_{1j} \\
\sigma_{21}  & \sigma_{22}  & \ldots & \sigma_{2j}  \\
\vdots & \vdots & \ldots & \vdots \\
\sigma_{j1}  & \sigma_{j2}  & \ldots & \sigma_{jj}
\end{bmatrix}.\end{align*}
The parameter $j$ will be called the \emph{underlying dimension}.  Observe that we can write $A$ and $\sigma$ in terms of the underlying matrices $\underline{A}$ and $\underline{\sigma}$ using the Kronecker product as
\begin{align*}
A= \underline{A}\otimes I  \qquad \text{ and } \qquad \sigma= \underline{\sigma} \otimes Q^{1/2}.
\end{align*}
Consequently, using symmetry of $Q$, we can write the corresponding matrix $G_t$ as in~\eqref{eqn:G_t} as
\begin{align}
 G_t\nonumber=  \int_0^t e^{-v A} \sigma \sigma^{\ast}  e^{-vA^{\ast}} \, dv &= \int_0^t (e^{-v \underline{A}} \otimes I) (\underline{\sigma}\otimes Q^{1/2}) (\underline{\sigma}\otimes Q^{1/2})^{\ast} (e^{-v \underline{A}^{\ast}} \otimes I) \, dv  \\
 &= \int_0^t e^{-v \underline{A}} \underline{\sigma} \, \underline{\sigma}^{\ast} e^{-v \underline{A}^{\ast}} \, dv \otimes Q \nonumber \\
\label{eqn:tensorformulaG}
	&=: \underline{G}_t \otimes Q.
\end{align}

Notationally, throughout this section, for any $x\in \RR^n$ and any $k\times k$ matrix $B$ we write 
\begin{align}
x= \begin{bmatrix}
x_1 \\
x_2\\
\vdots\\
x_j 
\end{bmatrix}\in (\RR^k)^j
\qquad \text{ and } \qquad 
\hat{B} x = \begin{bmatrix}
B x_1 \\
Bx_2\\
\vdots\\
B x_j
\end{bmatrix} \in (\RR^k)^j
\end{align}  
where the $x_\ell$'s above are $k$-dimensional column vectors. 
Setting
\begin{align}
\label{eqn:kalsub}
\underline{A}_{\underline{\sigma}}&:=\begin{bmatrix}
\underline{\sigma} & \underline{A}\, \underline{\sigma} & \ldots & \underline{A}^{j-1} \underline{\sigma}
\end{bmatrix},\\
\label{def:lambda}
\underline{\lambda}(j,t)&:= \inf_{|x|_{\RR^j}=1} \langle \underline{G}_t x, x\rangle_{\RR^j},
\end{align}
we obtain the following result.
\begin{Proposition}
\label{prop:tensor}
Suppose that $\underline{A}_{\underline{\sigma}}$ has full rank, $x,y \in \RR^n$ and $t>0$.  Then $\underline{G}_t$ is invertible, $\underline{\lambda}(j,t)>0$ and 
\begin{align}
\label{eqn:rho_tb1}
\rho_t(x,y)^2  &= \rho^{\underline{G}_t \otimes I}(\widehat{Q^{-1/2}}x, \widehat{Q^{-1/2}}y)^2 \leq \sum_{\ell=1}^j \frac{ \langle Q^{-1} (x_\ell - y_\ell), x_\ell-y_\ell \rangle_{\RR^k}}{\underline{\lambda}(j,t)}.
\end{align}
\end{Proposition}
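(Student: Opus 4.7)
My plan is to exploit the Kronecker structure $G_t = \underline{G}_t \otimes Q$ established in~\eqref{eqn:tensorformulaG} together with the explicit formula~\eqref{eqn:contdist} for the control distance, so that everything reduces to linear algebra on the underlying $j$-dimensional matrices. There are really three tasks: invertibility/positive definiteness of $\underline{G}_t$, the claimed equality, and the final upper bound.

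First, I would establish invertibility of $\underline{G}_t$ and the strict positivity $\underline{\lambda}(j,t) > 0$. This is simply an application of~\cref{lem:pd} to the underlying pair $(\underline{A}, \underline{\sigma})$ in dimension $j$: the hypothesis that $\underline{A}_{\underline{\sigma}}$ (as defined in~\eqref{eqn:kalsub}) has full rank is exactly~\cref{assump:1} for this pair, so the associated covariance $\underline{G}_t = \int_0^t e^{-v\underline{A}}\underline{\sigma}\,\underline{\sigma}^{\ast} e^{-v\underline{A}^{\ast}} \, dv$ is strictly positive-definite for $t>0$, giving $\underline{\lambda}(j,t) > 0$. Combined with $Q > 0$, this yields invertibility of $G_t = \underline{G}_t \otimes Q$ via standard Kronecker algebra, specifically $G_t^{-1} = \underline{G}_t^{-1} \otimes Q^{-1}$.

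Next, for the equality $\rho_t(x,y)^2 = \rho^{\underline{G}_t \otimes I}(\widehat{Q^{-1/2}}x, \widehat{Q^{-1/2}}y)^2$, I would use formula~\eqref{eqn:contdist} together with the identification $\widehat{Q^{-1/2}} = I_j \otimes Q^{-1/2}$. Writing $z = x - y$, we have
\begin{align*}
\rho_t(x,y)^2 = \langle z, G_t^{-1} z\rangle = \langle z, (\underline{G}_t^{-1} \otimes Q^{-1}) z\rangle = \langle (I_j \otimes Q^{-1/2}) z, (\underline{G}_t^{-1} \otimes I)(I_j \otimes Q^{-1/2}) z\rangle,
\end{align*}
where in the last step I factored $Q^{-1} = Q^{-1/2} \cdot Q^{-1/2}$ and used the mixed-product property $(\underline{G}_t^{-1} \otimes Q^{-1/2}) = (\underline{G}_t^{-1} \otimes I)(I_j \otimes Q^{-1/2})$ together with symmetry of $Q^{-1/2}$. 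This last expression is exactly $\rho^{\underline{G}_t \otimes I}(\widehat{Q^{-1/2}}x, \widehat{Q^{-1/2}}y)^2$ by~\eqref{eqn:contdist}.

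Finally, for the upper bound, the definition of $\underline{\lambda}(j,t)$ in~\eqref{def:lambda} as the smallest eigenvalue of $\underline{G}_t$ gives the operator inequality $\underline{G}_t^{-1} \leq \underline{\lambda}(j,t)^{-1} I_j$, which tensored with the identity on $\RR^k$ becomes $\underline{G}_t^{-1} \otimes I \leq \underline{\lambda}(j,t)^{-1} I_n$. Setting $u_\ell = Q^{-1/2}(x_\ell - y_\ell)$, this yields
\begin{align*}
\rho^{\underline{G}_t \otimes I}(\widehat{Q^{-1/2}}x, \widehat{Q^{-1/2}}y)^2 \leq \underline{\lambda}(j,t)^{-1}\sum_{\ell=1}^j |u_\ell|_{\RR^k}^2 = \underline{\lambda}(j,t)^{-1}\sum_{\ell=1}^j \langle Q^{-1}(x_\ell - y_\ell), x_\ell - y_\ell\rangle_{\RR^k},
\end{align*}
as claimed. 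There is no real analytic obstacle here; the whole argument is an exercise in Kronecker product manipulations together with a spectral bound, so the only thing to be careful about is keeping the block structure and the role of the matrix $Q$ consistent throughout.
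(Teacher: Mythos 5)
Your proof is correct and follows the same route as the paper: the equality is derived from~\eqref{eqn:contdist} via the Kronecker factorization $G_t^{-1} = \underline{G}_t^{-1}\otimes Q^{-1}$, and the upper bound uses that the spectrum of $\underline{G}_t \otimes I$ coincides with that of $\underline{G}_t$, which is what the paper's remark about eigenvectors of $C \otimes I$ encodes. You have simply written out explicitly the Kronecker manipulations and the spectral bound $\underline{G}_t^{-1}\otimes I \leqslant \underline{\lambda}(j,t)^{-1} I_n$ that the paper leaves to the reader, and you supply the invertibility of $\underline{G}_t$ via the Kalman argument in dimension $j$, which is what the paper invokes implicitly.
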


\begin{proof}
The equality in~\eqref{eqn:rho_tb1} follows from~\eqref{eqn:contdist} and properties of the Kronecker product.  The estimate in~\eqref{eqn:rho_tb1} follows by definition of $\underline{\lambda} (j,t)$ and the fact that if $v=(v^1, \ldots, v^j)\in \RR^j$ is an eigenvector of a $j\times j$ matrix $C$ with eigenvalue $\lambda$, then for every $i=1,2,\ldots, k$ the vector
\begin{align*}
\begin{bmatrix}
v^1 e_i \\v^2 e_i \\\vdots \\ v^j e_i
\end{bmatrix},
\end{align*}
with $e_i $ denoting the standard orthonormal basis element of $\RR^k$, is an eigenvector with eigenvalue $\lambda$ for the matrix $C\otimes I$, $I=I_{k\times k}$. 
\end{proof}


\subsection{Examples}
\label{sub:ex}
We next consider a number of examples with $A$ and $\sigma$ in the form~\eqref{eqn:exAsig} in which we estimate the control distance $\rho_t$ as in~\eqref{def:rho_t} using \cref{prop:tensor} and the structure afforded in the specific dynamics.

\begin{Example}[Kolmogorov diffusion]\label{Ex.1}
Consider the Kolmogorov diffusion, whose SDE is of the form \eqref{eqn:SDE} with $n=2k$ for some $k\in \N$, and
\begin{align*}
A= \begin{bmatrix} 0 & 0 \\ I & 0 \end{bmatrix}
\qquad\text{ and } \qquad \sigma=
\begin{bmatrix} Q^{1/2} & 0\\ 0 & 0 \end{bmatrix}
\end{align*}
where $I = I_{k\times k}$ and $Q$ is $k\times k$, symmetric and strictly positive definite. In this case, the underlying dimension is $j=2$ and the associated underlying matrices are given by \begin{align*}
\underline{A}=
\begin{bmatrix} 0 & 0\\ 1 & 0 \end{bmatrix} \qquad  \text{ and } \qquad
\underline{\sigma}=
\begin{bmatrix} 1 & 0 \\ 0 & 0 \end{bmatrix}.
\end{align*}
Observe that
\begin{align*}
\underline{A}_{\underline{\sigma}} = \begin{bmatrix}
\underline{\sigma} & \underline{A} \underline{\sigma}
\end{bmatrix}= \begin{bmatrix}
1&0&0&0 \\
0&0&1&0 \end{bmatrix}
\end{align*}
has full rank.  Furthermore, $\underline{A}$ is nilpotent with $\underline{A}^2 =0$.  Hence 
\begin{align*}
e^{-v \underline{A}} \sigma = (I_{2\times 2}-v \underline{A})\sigma =
\begin{bmatrix}
1 & 0 \\ -v & 0 \end{bmatrix},
\end{align*}
so that
\begin{align*}
	\underline{G}_t =\int_0^t e^{-v \underline{A}} \underline{\sigma} \,\underline{\sigma}^{\ast} e^{-v \underline{A}^{\ast}} \, dv = \int_0^t \begin{bmatrix} 1 & -v \\ - v & v^2 \end{bmatrix} dv = \begin{bmatrix} t & -\frac{t^2}{2} \\[3pt] -\frac{t^2}{2} & \frac{t^3}{3} \end{bmatrix}.
\end{align*}
We observe that the matrix $\underline{G}_t$ has inverse given by 
\begin{align*}
\underline{G}_t^{-1}= \begin{bmatrix}
\frac{4}{t} & \frac{6}{t^2}\\
\frac{6}{t^2}& \frac{12}{t^3}
\end{bmatrix},
\end{align*}
so that by \cref{prop:tensor}, if $\hat{x}= \widehat{Q^{-1/2} }x$ and $\hat{y}=\widehat{Q^{-1/2} }x$, we have 
\begin{align*}
\rho_t(x,y)^2 = \rho^{\underline{G}_t\otimes I}( \hat{x}, \hat{y})^2&= \langle (\underline{G}_t^{-1} \otimes I) (\hat{x}- \hat{y}), \hat{x}- \hat{y} \rangle \\
 &= \frac{4}{t}\| Q^{-1/2} (x_1-y_1) \|^2_{\RR^k} + \frac{12}{t^2}\langle Q^{-1} (x_1-y_1), x_2-y_2 \rangle_{\RR^k}\\
 &\qquad +\frac{12}{t^3}\| Q^{-1/2}(x_2-y_2)\|^2_{\RR^k} . 
 \end{align*}
Furthermore, we can calculate the smallest positive eigenvalue $\underline{\lambda}(2, t)>0$ of $\underline{G}_{t}$ to see that 
\begin{align*}
	\underline{\lambda}(2,t)= \frac{t+ \tfrac{t^3}{3} - \sqrt{(t+t^3/3)^2-t^4/3}}{2} &= \frac{t^4}{6(t+ \tfrac{t^3}{3} + \sqrt{(t+t^3/3)^2-t^4/3} )}\\
&\simeq \begin{cases}
\frac{t}{4} & \text{ as } t\rightarrow \infty \\
\frac{t^3}{12} & \text{ as } t\rightarrow 0^+.
\end{cases}
\end{align*}

Observe that the expression above for the distance $\rho_t$ also reveals a scale invariance for this particular example.  That is, if we define for each $a>0$ a dilation $\delta_a:\mathbf{R}^{2k}\to\mathbf{R}^{2k}$ by
	\begin{align*}
	\delta_{a}x=\delta_a \begin{bmatrix}
	x_1 \\
	x_2
	\end{bmatrix}
			= \begin{bmatrix} ax_1 \\ a^3x_2\end{bmatrix}, 
			\end{align*}
then we see that
	\[ \rho_t(x,y) = \rho_1(\delta_{t^{-1/2}}x,\delta_{t^{-1/2}}y) .\]
This is appropriate for this example as it coincides with the natural scale invariance that the solution $x_t$ inherits from the standard scaling relation for Brownian motion 
\begin{align*}
\sqrt{a}B_t^x\overset{d}{=} B_{at}^{\sqrt{a}x},
\end{align*}
where $B_t^x := x+ B_t $. To see this explicitly, note that for the given $A$ and $\sigma$, we may express the solution of \eqref{eqn:gaussian} as 
\[ x_t(x) = \begin{bmatrix} x_1 + Q^{-1/2} B_t^1 \\ x_2 + tx_1 + \int_0^t Q^{-1/2} B_s^1\,ds \end{bmatrix},\]
	where $\{B_t^1\}_{t\ge0}$ is a standard $k$-dimensional Brownian motion.  From this expression it follows that 
		$\{\delta_{a}x_t(x)\}\overset{d}{=} \{x_{a^2t}(\delta_a(x))\}$.
\end{Example}

Note that control distances in more general settings are not so easily computable.  In particular, the scale invariance observed in Example \ref{Ex.1} will not hold for general $A$ and $\sigma$. However, there are other examples where such relations are possible.

\begin{Example}[Iterated Kolmogorov diffusion]\label{Ex.2}
Consider next the SDE~\eqref{eqn:SDE} with $n=jk$ and $n\times n$ matrices $A$ and $\sigma$ given by
\begin{align*} 
	A =
\begin{bmatrix}
0 & 0 & \cdots & 0 & 0 \\
I & 0 & \cdots & 0 & 0 \\
0 & I & \cdots & 0 & 0\\
\vdots & \vdots & \ddots & \vdots & \vdots\\
0 & 0 & \cdots & I & 0\\
\end{bmatrix}
\ \ \ \ \mbox{ and } \ \ \
\sigma =
\begin{bmatrix}
I & 0 & \cdots & 0\\
0 & 0 & \cdots & 0\\
\vdots & \vdots & \ddots & \vdots\\
0 & 0 & \cdots & 0
\end{bmatrix},
\end{align*}
	with $I=I_{k\times k}$.  We observe that, in this case, equation~\eqref{eqn:SDE} has explicit solution given by 
	\[ x_t(x) =x+ \left( B_t, \int_0^t B_s\,ds, \ldots, \int_{\Delta_{j-1}(t)} B_{s_{j-1}} \,ds_{j-1} \cdots ds_1 \right) \]
	with $\{B_t\}_{t\ge0}$ a standard $k$-dimensional Brownian motion and $\Delta_\ell(t)=\{0<s_\ell<s_{\ell-1}<\cdots<s_1<t\}$. 
	As with the Kolmogorov diffusion, it is also possible to explicitly compute $G_t$, but one may also see the scale invariance just from the above expression for $x_t$.
	If we define $\delta_a:\RR^{jk}\to\RR^{jk}$ by
\[ \delta_ax = \delta_a(x_1,\ldots, x_j) := \left(ax_1,\ldots, a^{2j-1}x_j \right) \]
then the standard scaling relation for $\{B_t\}$ implies that
	\[ \{\delta_{a} x_t(0)\} \overset{d}{=} \{x_{a^2t}(0)\} \]
and, similarly, 
	\[ \{\delta_{a} (x_t(x))\} \overset{d}{=} \{x_{a^2t}(\delta_{a}(x)) \}.\]
This can be expressed in semigroup form as
\[ P_t(f\circ \delta_a) = (P_{a^2t}f) \circ \delta_a.\]
In particular, taking $f=f\circ\delta_a$ in the Wang-Harnack inequality, cf. \cref{thm:wang}, yields
	\begin{align*}
		((P_t(f\circ\delta_a))(x))^\alpha 
		&= (P_{a^2t}f)^\alpha(\delta_a(x)) \\
			&\leqslant \exp\left(\frac{\alpha}{2(\alpha-1)}\rho_{a^2t}(\delta_ax,\delta_ay) \right) (P_{a^2t}(f^\alpha))(\delta_a(y)) \\
			&= \exp\left(\frac{\alpha}{2(\alpha-1)}\rho_{a^2t}(\delta_ax,\delta_ay) \right) (P_{t}((f\circ \delta_a)^\alpha))(y).
	\end{align*}
We may now take $f=f\circ \delta_{a^{-1}}$ which gives
	\[ ((P_tf)(x))^\alpha \leqslant \exp\left(\frac{\alpha}{2(\alpha-1)}\rho_{a^2t}(\delta_ax,\delta_ay) \right) (P_{t}(f^\alpha))(y). \]
Note that the choice of $a=t^{-1/2}$ gives
	\[ ((P_tf)(x))^\alpha \leqslant \exp\left(\frac{\alpha}{2(\alpha-1)}\rho_{1}(\delta_{t^{-1/2}}x,\delta_{t^{-1/2}}y) \right) (P_{t}(f^\alpha))(y). \]
\end{Example}

\subsubsection{Linear kinetic Fokker-Planck equation}
\label{sec:KFP}
Recalling that $I=I_{k\times k}$ is the identity and $Q$ is $k\times k$ and positive-definite, consider the case when $A$ and $\sigma$ are $2k\times 2k$ of the following forms
\begin{align*}
A= \begin{bmatrix}
0& I \\ - I & - \gamma I
\end{bmatrix} \qquad\text{ and } \qquad \sigma= \begin{bmatrix}
0&0 \\ 0 & \sqrt{\gamma} Q^{1/2}
\end{bmatrix}
\end{align*}
where $\gamma >0$ is a constant, called the \emph{friction} parameter, and the matrix $Q$ is independent of $\gamma$.
In this case, the underlying matrices $\underline{A}$ and $\underline{\sigma}$ are $2\times 2$ and satisfy
\begin{align*}
\underline{A}= \begin{bmatrix}
0& 1 \\ -1 & -\gamma
\end{bmatrix} \qquad\text{ and } \qquad \underline{\sigma}= \begin{bmatrix}
0 & 0 \\ 0 & \sqrt{\gamma}
\end{bmatrix}.\end{align*}
Note that
\begin{align}
\label{eqn:KRC}
\underline{A}_{\underline{\sigma}} = \begin{bmatrix}
\underline{\sigma} & \underline{A} \underline{\sigma}
\end{bmatrix}= \begin{bmatrix}
0&0&0&\sqrt{\gamma} \\
0&\sqrt{\gamma}&0&-\gamma^{3/2}
\end{bmatrix}
\end{align}
has full rank so that $\underline{\lambda}(2,t)>0$ for all $t>0$.  Our goal will be to study $\underline{\lambda}(2,t)$ in the regimes where $\gamma \approx 0$ and $\gamma \gg 1$.  This analysis will be done on the appropriate time scale depending on the regime.  That is, when $\gamma \approx 0$, we set $t=t_*/\gamma$ and when $\gamma \gg 1$, we let $t=t_* \gamma$ where $t_*\geqslant 1$.  These timescales correspond to the correct scaling of the mixing rate of the Markovian dynamics with respect to $\gamma$~\cite{CamrudGordinaHerzogStoltz2022}.  Below, we assume that $\gamma \neq 2$.

We find it convenient to diagonalize $\underline{A}$.  First, observe that $\underline{A}$ has distinct eigenvalues $\lambda_1, \lambda_2 \in \mathbf{C}$ given by
\begin{align}
\label{eqn:lamb}
\lambda_1 =  \frac{-\gamma+ \sqrt{\gamma^2-4}}{2}\qquad \text{ and } \qquad \lambda_2 = \frac{-\gamma- \sqrt{\gamma^2-4}}{2}
\end{align}
where $\sqrt{-1}=i$.  Furthermore, we can write
\begin{align*}
\underline{A}= \begin{bmatrix} 1&1\\
\lambda_1 & \lambda_2  \end{bmatrix}\begin{bmatrix} \lambda_1&0\\
0 & \lambda_2  \end{bmatrix}\begin{bmatrix} 1&1\\
\lambda_1 & \lambda_2  \end{bmatrix}^{-1}.\end{align*}
Hence,
\begin{align*}
e^{-\underline{A} t} \underline{\sigma} =- \frac{\sqrt{\gamma}}{\sqrt{\gamma^2-4}} \begin{bmatrix} 0& e^{-\lambda_2 t} - e^{-\lambda_1 t}\\
0& \lambda_2 e^{-\lambda_2 t} - \lambda_1 e^{-\lambda_1 t}\end{bmatrix}.
\end{align*}
Thus, if $c_\gamma = \gamma/(\gamma^2-4)$, we have
\begin{align*}
&\underline{G}_t 
= \int_0^t e^{-\underline{A} s} \underline{\sigma} \underline{\sigma}^{\ast} e^{-\underline{A}^{\ast} s} \, ds\\
&=c_\gamma \int_0^t\begin{bmatrix}
e^{-2\lambda_1 s} -2 e^{-(\lambda_1+ \lambda_2) s} + e^{-2\lambda_2 s} & \lambda_1 e^{-2\lambda_1 s} - (\lambda_1+ \lambda_2) e^{-(\lambda_1+ \lambda_2) s} + \lambda_2 e^{-2\lambda_2 s} \\\
\lambda_1 e^{-2\lambda_1 s} - (\lambda_1+ \lambda_2) e^{-(\lambda_1+ \lambda_2) s} + \lambda_2 e^{-2\lambda_2 s}& \lambda_1^2 e^{-2\lambda_1 s} - 2\lambda_{1} \lambda_2 e^{-(\lambda_1 + \lambda_2) s} + \lambda_2^2 e^{-2\lambda_2 s}
\end{bmatrix}\, ds\\
&=: c_\gamma \begin{bmatrix}
G_{11} & G_{12} \\
G_{12} & G_{22}
\end{bmatrix}.
\end{align*}
Observe that for all $\gamma \neq 2$, by positive-definiteness of $\underline{G}_t$ via~\eqref{eqn:KRC}, the discriminant
\begin{align*}
D:= G_{11} G_{22} - G_{12}^2
\end{align*}
is strictly positive for all $t>0$.

\vspace{0.1in}

\noindent \underline{\emph{Case 1} ($\gamma \gg 1$)}.  In this case, $\lambda_1\neq \lambda_2 $ are both real and we set $t= \gamma t_*$ where $t_* \geqslant 1$ is independent of $\gamma$.  Moreover, since $c_\gamma >0$,
\begin{align}
\nonumber
 \underline{\lambda}(j,t)&=c_\gamma \frac{ G_{11}+ G_{22} - \sqrt{(G_{11}+ G_{22})^2 - 4D}}{2}=c_\gamma \frac{2D}{G_{11}+ G_{22} + \sqrt{(G_{11}+G_{22})^2 -4D}}.
\end{align}
Using the explicit expressions above along with the values of $\lambda_1, \lambda_2$ in~\eqref{eqn:lamb}, we obtain
\begin{align}
\label{eqn:Ggamlarge}
	G_{11} &= \frac{2}{\gamma}- \frac{\gamma}{2}  - \frac{2}{\gamma}e^{\gamma t} - \frac{\lambda_2}{2} e^{-2\lambda_1 t} - \frac{\lambda_1}{2}e^{-2\lambda_2 t}, \\
 G_{22}&= \frac{2}{\gamma}- \frac{\gamma}{2} - \frac{2}{\gamma} e^{\gamma t} - \frac{\lambda_1}{2}e^{-2\lambda_1 t} - \nonumber 
	\frac{\lambda_2}{2}e^{-2\lambda_2 t },\\
\nonumber 
	G_{12}&= e^{\gamma t} - \frac{e^{-2\lambda_1 t}}{2} - \frac{e^{-2\lambda_2 t}}{2}.
\end{align}
Furthermore, note that as $\gamma \rightarrow \infty$ we have the following asymptotic formulas
\begin{align}
\label{eqn:lambdaa1}
\lambda_1&= - \frac{1}{\gamma} + O(\gamma^{-2})\qquad \text{ and } \qquad  \lambda_2 = -\gamma + \frac{1}{\gamma} + O(\gamma^{-2}) .
\end{align}
Substituting in $t= t_* \gamma$ into the expressions following~\eqref{eqn:Ggamlarge} and using~\eqref{eqn:lambdaa1} produces the following bound
\begin{align*}
	D&= G_{11}G_{22}- G_{12}^2 \\
	&= \bigg\{\frac{e^{-4\lambda_2 \gamma t_*}}{4} - e^{(-2\lambda_2+\gamma)\gamma t_*} + \frac{\lambda_2^2}{4} e^{2\gamma^2 t_*} + \frac{\lambda_2 \gamma}{4} e^{-2\lambda_2 \gamma t_*}+ R_1\bigg\}\\
	& \qquad - \bigg\{\frac{e^{-4\lambda_2 \gamma t_*}}{4} - e^{(-2\lambda_2 + \gamma) \gamma t_*} + \frac{3}{2}e^{2\gamma^2 t_*} + R_2  \bigg\}\\
&\geqslant \frac{\lambda_2^2}{4} e^{2\gamma^2 t_*} \bigg \{ 1 - e^{-t_*}  -C\gamma^{-2}\bigg\}
\end{align*}
which is satisfied for all $t_*\geqslant 1$ and $\gamma \geqslant \gamma_\ell \gg 1$ for some constant $C>0$ independent of $t_*, \gamma$.
In a similar fashion, by increasing $\gamma_\ell>0$ and $C>0$ if needed, we obtain\begin{align*}
G_{11}+ G_{22} + \sqrt{(G_{11}+G_{22})^2 - 4D} &\leqslant  -\frac{ 3\lambda_2}{2} e^{-2\lambda_2 \gamma t_*} \bigg \{ 1 + C/\gamma \bigg\}
\end{align*}
for all $t_* \geqslant 1, \gamma \geqslant  \gamma_\ell$.
Consequently, by increasing $\gamma_\ell$ again if needed, we see that  for $t_* \geqslant 1$ and $\gamma \geqslant \gamma_L$
\begin{align*}
\lambda(2, t_* \gamma) \geqslant \frac{-c_\gamma \lambda_2}{6} e^{t_*} \frac{1-e^{-t_*} - C\gamma^{-2}}{1+C\gamma^{-1}} \geqslant \frac{e^{t_*}}{16}.
\end{align*}

\vspace{0.1in}

\noindent \underline{\emph{Case II} ($\gamma \approx 0$)}.  In this case, we set $t= t_*/\gamma$ where $t_* \geqslant 1$.  In this case, since $c_\gamma <0$,
\begin{align*}
\underline{\lambda}(2, t) = c_\gamma \frac{ (G_{11}+ G_{22}) + \sqrt{(G_{11}+ G_{22})^2 - 4D}}{2}=|c_\gamma| \frac{2D}{-(G_{11}+ G_{22}) + \sqrt{(G_{11}+G_{22})^2 -4D}}.\end{align*}
Again, since $\gamma \approx 0$, we find that if $\beta = |\sqrt{\gamma^2 -4}|/2$, then
\begin{align*}
G_{11}&= - \frac{\gamma}{2}+ \frac{2}{\gamma}[1- e^{\gamma t}] + \frac{\gamma}{2}e^{\gamma t} \cos( 2\beta t) + \beta e^{\gamma t} \sin (2\beta t),\\
	G_{22}&=- \frac{\gamma}{2} + \frac{2}{\gamma}[ 1- e^{\gamma t}] + \frac{\gamma}{2}e^{\gamma t} \cos (2\beta t) - \beta e^{\gamma t} \sin(2\beta t)\\
	G_{12}&=e^{\gamma t} (1- \cos(2\beta t)).
\end{align*}
Setting $t= t_*/\gamma$ where $t_* \geqslant 1$ we find that there exists a constant $C>0$ so that
\begin{align}
\label{eqn:b1gamsmall}
D \geqslant \frac{4}{\gamma^2}(1-e^{t_*})^2 \{ 1 - C\gamma \}
\end{align}
for all $t_* \geqslant 1$, $0<\gamma \leqslant \gamma_s$, where $\gamma_s>0$ is sufficiently small and $C>0$ is independent of $\gamma$.  In a similar fashion, by decreasing $\gamma_s>0$ and $C>0$ if needed, we obtain the following bound
\begin{align}
\label{eqn:b2gamsmall}
-(G_{11}+ G_{22}) + \sqrt{(G_{11}+G_{22})^2 -4D} \leqslant \frac{8}{\gamma}(e^{t_*}-1) \{ 1+ C \gamma \}
\end{align}
satisfied for all $t_* \geqslant 1$ and $0< \gamma \leqslant \gamma_s$.  Combining~\eqref{eqn:b1gamsmall} with~\eqref{eqn:b2gamsmall} and adjusting $\gamma_s$ smaller if needed we obtain
\begin{align*}
\underline{\lambda}(2, t_*/\gamma) \geqslant \frac{|c_\gamma|} {\gamma}  (e^{t_*}-1) \frac{1-C\gamma}{1+C\gamma} \geqslant \frac{e^{t_*}-1}{8} \geqslant \frac{e^{t_*}}{16}
\end{align*}
for all $t_* \geqslant 1$ and $\gamma \leqslant \gamma_s$.

\subsubsection{Coupled oscillators}

For $a,d,b \in \RR$, let $\text{Tri}_j(a, d, b)$ denote the $j\times j$ tridiagonal matrix satisfying
\begin{align*}
(\text{Tri}_j(a,d,c))_{\ell m}= \begin{cases}
a & \text{ if } \,m=\ell-1\\
d & \text{ if } \,m=\ell \\
c & \text{ if } \, m=\ell+1\\
0 & \text{ otherwise}
\end{cases}
\end{align*}
and let $E_{\ell m}$ be the $j\times j$ matrix with $(\ell,m)$th entry equal to $1$ and all other entries equal to $0$.  Let the underlying matrices $\underline{A}$ and $\underline{\sigma}$ be given by
\begin{align*}
\underline{A}= \text{Tri}_j(1,0,-1) \qquad \text{ and } \qquad \underline{\sigma}= E_{11}.
\end{align*}
A short calculation shows that for $m=0,1,\ldots, j-1$ and $\ell=1,2,\ldots, j$
\begin{align*}
(\underline{A}^m \underline{\sigma})_{m+1, \ell} =E_{m+1, \ell} \qquad \text{ and } \qquad  (\underline{A}^m \underline{\sigma})_{z, \ell} = 0
\end{align*}
for $m+2\leq z \leq j-1$.  
Consequently, the matrix $\underline{A}_{\underline{\sigma}}$ has full rank and so $\underline{\lambda}(j, t) >0$ for all $t>0$.
The goal of this calculation will be to estimate $\underline{\lambda}(j,t)$ for $t\gg 1$.  We find it again convenient to diagonalize $\underline{A}$.

One can show that $\underline{A}$ has distinct eigenvalues $\lambda_\ell$, $\ell=1,2, \ldots, j$, given by
\begin{align}
\label{eqn:evalsad}
\lambda_\ell = 2 i \cos \Big(\frac{\ell\pi}{j+1}\Big)
\end{align}
with corresponding (right) eigenvectors $v_\ell$, $\ell=1,2, \ldots, j$, defined by
\begin{align*}
(v_\ell)_m= i^m\sin\Big( \frac{\ell m \pi}{j+1}\Big), \qquad m=1,2, \ldots, j.
\end{align*}
See, for example,~\cite{NoschesePasquiniReichel2013, SmithGBook1985}.  Moreover, using basic properties of trigonometric functions, it can be checked that the set of eigenvectors $\{ v_1, \ldots, v_j \}$ forms an orthogonal family with identical lengths (see~\cite{TatariHamadi2020})
\begin{align*}
|v_\ell|^2= \frac{j+1}{2},  \qquad \ell =1,2, \ldots, j.
\end{align*}
Therefore, we define an orthonormal family of (column) eigenvectors $\{ w_1, \ldots, w_j \}$ by $w_\ell=v_\ell/|v_\ell|$, $\ell=1,2,\ldots, j$.  Setting $P=[w_1 \,\, w_2\,\,  \ldots\,\,  w_j]$ and letting $P^{\ast}$ denote its Hermitian transpose, for any $x\in \RR^j_{\neq 0}$ we have by symmetry of $\sigma$ and antisymmetry of $\underline{A}$
\begin{align*}
\langle \underline{G}_t x, x \rangle  =\int_0^t |\underline{\sigma}^{\ast} e^{-v \underline{A}^{\ast}} x|^2 \, dv = \int_0^t |\underline{\sigma} e^{v \underline{A}} x|^2 \, dv = \int_0^t | E_{11} P \text{diag}(e^{v \lambda_1}, e^{v\lambda_2}, \ldots, e^{v\lambda_j}) P^{\ast} x|^2.
\end{align*}
Let $\xi= P^{\ast} x\in \mathbf{C}^j$ and notice that
\begin{align*}
\langle \underline{G}_t x, x \rangle = \int_0^t \Big| \sum_{\ell=1}^j (w_\ell)_1 \xi_\ell e^{\lambda_\ell v} \Big|^2 \, dv& = \sum_{\ell, m=1}^j (w_\ell)_1 \overline{(w_m)_1}  \xi_\ell \overline{\xi_m}  \int_0^t e^{v\lambda_\ell+ v\overline{\lambda_m}} \, dv\\
&=t \sum_{\ell} |(w_\ell)_1|^2 |\xi_\ell|^2  + \sum_{\ell \neq m} (w_\ell)_1 \overline{(w_m)_1}  \xi_\ell \overline{\xi_m}  \int_0^t e^{v\lambda_\ell+ v\overline{\lambda_m}} \, dv.\end{align*}
By definition of the eigenvalues $\lambda_m$ in~\eqref{eqn:evalsad}, for $\ell \neq m$, $\ell, m \in \{ 1,2, \ldots, j\}$,  we obtain
 \begin{align*}
\bigg| \int_0^t e^{v\lambda_\ell+ v\overline{\lambda_m}} \, dv\bigg| =\bigg| \frac{e^{2ti \cos (\frac{\ell \pi}{j+1})}-e^{2i t \cos (\frac{m \pi}{j+1})}}{2i \cos \big(\frac{\ell \pi}{j+1} \big)- 2i \cos \big(\frac{m\pi}{j+1} \big)}\bigg| \leqslant  \frac{1}{\big|\cos \big(\frac{\ell \pi}{j+1} \big) -  \cos \big(\frac{m\pi}{j +1} \big)\big|}. \end{align*}
Using the mean value theorem, we find that for $\ell \neq m$, $\ell, m \in \{ 1,2, \ldots, j \}$, there exists $p=p_{\ell m} $ strictly between $\ell \pi/(j+1)$ and $m \pi/(j+1)$ such that
\begin{align*}
\frac{1}{\big|\cos \big(\frac{\ell \pi}{j+1} \big) -  \cos \big(\frac{m\pi}{j +1} \big)\big|} = \frac{1}{\big|\frac{(\ell-m)\pi}{j+1} \sin(p)\big|}\leqslant \frac{(j+1)}{\pi}\frac{1}{\sin(\tfrac{\pi}{j+1})} =:c_j  .
\end{align*}
Hence, applying the bound above and using Cauchy-Schwarz we obtain
\begin{align*}
\bigg|\sum_{\ell \neq m} (w_\ell)_1 \overline{(w_m)_1}  \xi_\ell \overline{\xi_m}  \int_0^t e^{v\lambda_\ell+ v\overline{\lambda_m}} \, dv\bigg| \leqslant c_j \bigg(\sum_{\ell} |(w_\ell)_1 \xi_\ell| \bigg)^2 &\leqslant c_j \sum_{\ell} |(w_\ell)_1|^2 \sum_{\ell} |\xi_\ell |^2 \\
&= c_j |x|^2.
\end{align*}
Thus,
\begin{align*}
\langle G_t x, x \rangle \geqslant  t \sum_{\ell} |(w_\ell)_1|^2 |\xi_\ell|^2 -c_j |x|^2\geqslant \big( t \sin^2\big(\tfrac{\pi}{j+1}\big) - c_j\big)|x|^2
\end{align*}
and so we obtain
\begin{align*}
\underline{\lambda}(j,t) \geqslant t \sin^2\big(\tfrac{\pi}{j+1}\big) - c_j \end{align*}
for all $t>0$.

\subsubsection{Oscillators with some damping}
\label{sec:oscdamp}
We revisit the previous example, but this time we place damping on the first coordinate.  That is, we set
\begin{align*}
\underline{A}= \text{Tri}_j(1,0,-1) - E_{11} \qquad \text{ and } \qquad \underline{\sigma}= E_{11}.
\end{align*}
In this case, diagonalizing $\underline{A}$ as before seems challenging because an explicit expression for eigenvalues and eigenvectors is not known.  In order to analyze $\underline{\lambda}(j, t)$, we appeal to~\cref{prop:G_2}.

Note that, in a similar fashion to the previous example, one can readily check that $\underline{A}_{\underline{\sigma}}$ has full rank, so that $\underline{\lambda}(j,t) >0$ for all $t>0$.  We next seek to estimate
\begin{align}
\label{eqn:Lyap1}
\int_0^s | e^{-v \underline{A}^{\ast}} x|^2 \, dv.
\end{align}
Observe that the integrand in~\eqref{eqn:Lyap1} is precisely $|y(v)|^2$ where $y(v)$ is the solution of the following ODE on $\RR^j$ at time $v$:
\begin{align*}
\begin{cases}
\dot{y} = (E_{11} + \text{Tri}_j(1,0,-1)) y\\
y_0 = x \in \RR^j.
\end{cases}
\end{align*}
We claim that
\begin{align*}
|y(v)|^2 \geqslant d_j |x|^2 e^{r_j v} \text{ for all  }v\geqslant 0
\end{align*}
where $d_j , r_j >0$ are constants.  We will prove the claim using a convenient Lyapunov function.

To define our Lyapunov function, let $a_\ell$, $\ell=0,1,2, \ldots, j-1$, be positive constants to be determined and set $a_j=0$.  Define and function $V : \RR^j \rightarrow \RR$ by
\begin{align*}
V(y) =\frac{a_0}{2} |y|^2 - \sum_{i=1}^{j-1} a_i y_i y_{i+1}.
\end{align*}
We first pick a convenient form for the constants $a_i$; that is, we define
\begin{align*}
a_{i-1}= a_i + b_i, \,\,\,\, i=2, \ldots, j,
\end{align*}
for some positive constants $b_i$, $i=2, \ldots, j$, to be determined.  Note that, in particular, this means that
\begin{align*}
a_\ell = \sum_{i=\ell+1}^j b_i, \qquad \ell =1,\ldots, j,
\end{align*}
with the convention that the empty sum is zero.

First observe that we have the following explicit bound:
\begin{align}
\nonumber \big(\tfrac{a_0}{2}+\textstyle{\sum_{i=2}^j} b_i\big) |y|^2 &\geqslant  (\tfrac{a_0}{2} + \tfrac{a_1}{2})y_1^2+\sum_{i=2}^j  \big( \tfrac{a_{0}}{2}+ \tfrac{a_{i-1}}{2}+ \tfrac{a_{i}}{2}\big) y_i^2 \\
\label{eqn:LYINE1}
&\geqslant V(y)\\
\nonumber & \geqslant \big(\tfrac{a_0}{2}-\tfrac{a_1}{2} \big)y_1^2 + \sum_{i=2}^{j} \big( \tfrac{a_{0}}{2}- \tfrac{a_{i-1}}{2}- \tfrac{a_{i}}{2}\big) y_i^2\\
\nonumber & \geqslant \big(\tfrac{a_0}{2} - \textstyle{\sum}_{i=2}^j b_i \big) |y|^2.
\end{align}
In particular, we need to choose
\begin{align*}
a_0 > 2\sum_{i=2}^j b_i
\end{align*}
so that $V$ is nonnegative.

Next, observe that
\begin{align*}
\frac{d}{dt} V(y(t)) &= -a_1 y_1(t)y_2(t)+ \sum_{i=1}^{j} (a_{i-1} - a_{i}) (y_{i}(t))^2 +\sum_{i=1}^{j-2} (a_i- a_{i+1}) y_i(t) y_{i+2}(t)\\
&= -a_1 y_1(t)y_2(t)+ (a_0-a_1) y_1^2+  \sum_{i=2}^j b_i (y_i(t))^2 + \sum_{i=1}^{j-2} b_{i+1} y_i(t) y_{i+2}(t)\\
& \geqslant -a_1 y_1(t)y_2(t)+ (a_0-a_1- \tfrac{b_2}{2}) y_1^2+ (b_2- \tfrac{b_3}{2}) y_2^2 + (b_j - \tfrac{b_{j-1}}{2})y_j^2\\
&\qquad +\sum_{i=3}^{j-1} (b_i - \tfrac{b_{i+1}}{2}- \tfrac{b_{i-1}}{2}) (y_i(t))^2 .
\end{align*}
Pick $b_i=\log(i+1)$, $i=2,\ldots, j$, and notice by concavity we have
\begin{align*}
b_i > \frac{b_{i+1}}{2}+ \frac{b_{i-1}}{2}, \,\, i=2, \ldots, j.
\end{align*}
On the other hand, for any $\beta >0$, we have 
\begin{align*}
\frac{d}{dt} V(y(t)) &\geqslant  (a_0 - a_1- \tfrac{a_1^2}{2\beta}- b_2) y_1^2  + (b_2- \tfrac{b_3}{2} - \tfrac{\beta}{2}) y_2^2 + (b_j - \tfrac{b_{j-1}}{2})y_j^2\\
&\qquad +\sum_{i=3}^{j-1} (b_i - \tfrac{b_{i+1}}{2}- \tfrac{b_{i-1}}{2}) (y_i(t))^2
\end{align*}
Pick $\beta = \log(2)$, define $b_{j+1}=\log(j+2)$ and let
\begin{align*}
a_0= a_1 + \tfrac{a_1^2}{2\beta} +  b_2 + b_j - \tfrac{b_{j+1}}{2}- \tfrac{b_{j-1}}{2}. \end{align*}
Using the fact that the function $x\mapsto \log(x) -\log(x+1)/2 - \log(x-1)/2$ defined on $[2, \infty)$ is strictly decreasing and using~\eqref{eqn:LYINE1}, we obtain
\begin{align*}
\frac{d}{dt}V(y(t))\geqslant (b_j - \tfrac{b_{j+1}}{2}- \tfrac{b_{j-1}}{2}) |y(t)|^2\geqslant  \frac{(b_j - \tfrac{b_{j+1}}{2}- \tfrac{b_{j-1}}{2})}{a_0 + 2\sum_{i=2}^j b_i}V(y(t)).
\end{align*}
Hence, letting 
\begin{align}
\label{eqn:rcdef}
r_j = \frac{ (b_j - \tfrac{b_{j+1}}{2}- \tfrac{b_{j-1}}{2}) }{a_0+ 2\sum_{i=2}^j b_i} \qquad \text{ and } \qquad c_j = \frac{\frac{a_0}{2}- \sum_{i=2}^j b_i}{\frac{a_0}{2}+ \sum_{i=2}^j b_i},
\end{align}
we have that
\begin{align*}
|y(t)|^2 \geqslant |x|^2 c_j e^{r_j t} \,\,\, \text{ for all } t\geqslant 0.
\end{align*}
Thus,
\begin{align*}
\int_0^t |e^{-v\underline{A}^{\ast}} x|^2 \, ds \geqslant |x|^2 \frac{c_j}{r_j} (e^{r_j t}-1).
\end{align*}
Applying \cref{prop:G_2}, we obtain the bound for $t\geq 2$
\begin{align}
\label{eqn:oscdamp}
\underline{\lambda}(j,t) \geqslant \underline{\lambda}(j, t/2)\frac{c_j}{t r_j} (e^{\frac{r_j t}{2}}-1)\geq \frac{\underline{\lambda}(j, 1)}{t r_j} c_j  (e^{\frac{r_j t}{2}}-1)
\end{align}
\section{From finite to infinite dimensions}
\label{sec:FDTID}

The goal of this section is to move from the finite-dimensional setting in relation~\eqref{eqn:SDE} to an infinite-dimensional version of the system where the noise becomes infinite-dimensional.  In the context of the examples of the previous section, this means that if $n=jk$, where $j$ is the underlying dimension $j$ and $k$ is the dimension of the noise, then we take $k\rightarrow \infty$ leaving $j$ fixed.  Building off of the general analysis done with the modified gradients in \cref{sec:modgrad}, we will be able to extend the the Wang-Harnack inequality in \cref{thm:wang} to an infinite-dimensional version.  We will then use this infinite-dimensional version to conclude a quasi-invariance result as well as a ``time-infinity" Wang-Harnack inequality.  We conclude the section by revisiting some of the examples considered in \cref{sec:class} as they relate to their associated infinite-dimensional versions.

\subsection{The infinite-dimensional setting}
Fix a separable, infinite-dimensional Hilbert space $H$ with inner product and norm respectively denoted by 
\begin{align*}
\langle \cdot, \cdot \rangle_H \qquad \text{ and }\qquad \| \cdot \|_H=\sqrt{\langle \cdot,  \cdot  \rangle_H}.
\end{align*} Below, we make several slight abuses of notation to help connect with the finite-dimensional setting~\eqref{eqn:SDE} previously considered.  In particular, we will intentionally reuse the notations $Q, A, \sigma$, which were previously used to denote operators on $\RR^k, \RR^n, \RR^n$, respectively.  Here, in this section, they will be linear operators on the respective spaces $H$, $H^j$, $H^j$.  We let $W=H^j$ denote the separable Banach space with norm given by
\begin{align*}
\| X\|_W := \sqrt{\| X^1 \|_H^2+ \cdots + \| X^j \|_H^2  }
\end{align*}
where $X=(X^1, X^2, \ldots, X^j) \in H^j$.

Throughout this section we will employ the following:
\begin{Assumption}
\label{assump:2}
The mapping $Q:H\rightarrow H$ is a strictly positive symmetric bounded linear operator of trace-class.
\end{Assumption}
Note that under \cref{assump:2}, we can diagonalize $Q$ in $H$; that is, there is an orthonormal basis $\{ e_\ell \}_{\ell\in \N}$ of $H$ and real numbers $\alpha_\ell>0$ for which
\begin{align}
\label{eqn:Qdef}
Q e_\ell = \alpha_\ell e_\ell \,\, \text{  for all } \,\,  \ell\in \N.
\end{align}
Note that $Q$ being trace class translates to the summability condition
\begin{align}
\label{eqn:summability}
\sum_{\ell=1}^\infty \alpha_\ell  < \infty.
\end{align}
Again, offering slight abuses of notation, we assume that
\begin{align*}
B(t):=(B^1(t), B^2(t), \ldots, B^j(t))
\end{align*}
 where $B^i(t)$, $i=1,2,\ldots, j$, are independent, $Q$-Brownian motions on $H$.  Note that this is the same as supposing that each $B^i$ can be written as
\begin{align*}
B^i(t)= \sum_{\ell=1}^\infty \sqrt{\alpha_\ell} \beta_\ell^i(t) e_\ell
\end{align*}
where $\{ \beta_k^i \}_{k=1, i=1}^{\infty, j}$ is a collection of mutually independent standard, real-valued Brownian motions on $(\Omega, \mathcal{F}, \PP)$~\cite{DaPratoZabczykBook2014}.  The fact that each $B^i$ has continuous paths in $\| \cdot \|_H$ with probability one follows by path continuity of each $\beta_k^i$ and the summability condition~\eqref{eqn:summability}.

Recall that associated to $Q$ is the Hilbert space $H_Q$ consisting of elements $h$ such that 
\[ \sum_{\ell=1}^\infty \frac{\langle h, e_\ell\rangle_H^2}{\alpha_\ell}<\infty\]
equipped with the inner product $\langle h,k\rangle_Q:=\langle Q^{-1/2}h,Q^{-1/2}k\rangle_H$, where $Q^{-1/2}$ denotes the pseudo-inverse of $Q^{1/2}$. The space $H_Q$ is called the \emph{Cameron-Martin space} associated to the Gaussian measure $\mu=\operatorname{Law}(B^i(1))$, and enjoys various important analytic properties with respect to $\mu$. For example, we recall the Cameron-Martin-Maruyama ``quasi-invariance'' theorem: The measure $\mu^h$ defined by $\mu^h(\Gamma) := \mu(\Gamma+h)$ is mutually absolutely continuous with respect to $\mu$ if and only if $h\in H_Q$. If $h\notin H_Q$ then $\mu^h$ is singular with respect to $\mu$.
Let $\mathcal{H}_Q$ denote the Hilbert space $(H_Q)^j$ equipped with inner product coming from the product structure
\[ \langle X,Y\rangle_{\mathcal{H}_Q} := \langle X^1,Y^1\rangle_Q + \cdots + \langle X^j,Y^j\rangle_Q\]
and induced norm 
\[ \|X\|_{\mathcal{H}_Q} := \sqrt{\|X^1\|_Q^2 + \cdots + \|X^j\|_Q^2}
	= \|(I_{j\times j}\otimes Q^{-1/2}) X\|_W. \]

Letting $I_H$ denote the identity operator on $H$, we set
\begin{align}
\label{eqn:Asigdef}
A=(\underline{A} \otimes I_H) \qquad \text{ and } \qquad \sigma= (\underline{\sigma} \otimes I_H)
\end{align}
where $\underline{A}$ and $\underline{\sigma}$ are $j\times j$ real matrices.  In relation to~\eqref{eqn:SDE}, in this context we consider the integral equation on $W:=H^j$
\begin{align}
\label{eqn:SDEQ}
	X(t;X_0) &= X_0 + \int_0^t A X(s) \, ds + \sigma B(t), \,\,\, \, X_0 \in W.
\end{align}
Using a standard iteration procedure, it is not hard to show that for every $X_0 \in W$, relation~\eqref{eqn:SDEQ} has a unique solution which is a stochastic process $X(t)$ on $W$ with continuous paths in the norm $\| \cdot \|_W$.  Furthermore, the process $X(t)$ is Markov and we let $\mathcal{P}_t$ denote the corresponding Markov semigroup.

\subsection{Infinite-dimensional Wang-Harnack inequality}
Next, consider the projection operator $\pi_k : H\rightarrow H$ associated to the orthonormal basis $\{ e_k \}_{k\in \N}$ given by
\begin{align*}
\pi_k (h) := \pi_k\bigg( \sum_{\ell=1}^\infty \langle h, e_\ell\rangle e_\ell\bigg) = \sum_{\ell=1}^k \langle h, e_\ell\rangle e_\ell,
\end{align*}
and then define $\Pi_k : W\rightarrow W$ as
\begin{align*}
\Pi_k(h_1 , h_2, \ldots, h_j) := (\pi_k(h_1), \ldots, \pi_k(h_j)).
\end{align*}
We observe that, by~\eqref{eqn:Asigdef} and~\eqref{eqn:SDEQ}, the process $t\mapsto \Pi_k X(t)$ satisfies the finite-dimensional integral equation
\begin{align*}
	\Pi_k X(t;X_0)= \Pi_k X_0 + \int_0^t A \Pi_k X(s) \, ds + \sigma \Pi_k B_t.
\end{align*}
Let $\mathcal{P}_t^k$ denote the Markov semigroup corresponding to $\Pi_k  X(t)$. 

\begin{Proposition}
\label{prop:project}
	Suppose that $\underline{A}_{\underline{\sigma}}$ as in~\eqref{eqn:kalsub} is of full rank.  Let $\alpha >1$.  Then for all bounded, measurable $\varphi: W\rightarrow \RR$, $t>0$, $X_0, Y_0 \in W$, and  $k\in\mathbf{N}$ we have
\begin{align}
\label{eqn:Wscaled}
	(\mathcal{P}_t^k \varphi (\Pi_k X_0))^\alpha
	\leqslant \mathcal{P}_t^k \varphi^\alpha(\Pi_kY_0)\exp\bigg(\frac{\alpha}{\alpha-1} \frac{ \|  (I_{j\times j} \otimes Q^{-1/2})\Pi_k(X_0-Y_0) \|_W^2}{2\underline{\lambda}(j,t)} \bigg).
\end{align}
\end{Proposition}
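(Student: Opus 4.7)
The plan is to recognize that, for each fixed $k\in\mathbf{N}$, the projected process $\Pi_k X(t)$ evolves in a finite-dimensional subspace according to an SDE of exactly the form~\eqref{eqn:SDE}. This reduces the proposition to applying the finite-dimensional Wang--Harnack inequality \cref{thm:wang} and then translating the resulting control distance into the right-hand side of~\eqref{eqn:Wscaled} via the tensor-product identity~\eqref{eqn:tensorformulaG}.

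First, I would identify $(\pi_k H)^j$ with $\RR^{jk}$ via the eigenbasis $\{e_1,\ldots,e_k\}$ of $Q$ and let $Q_k$ denote the $k\times k$ diagonal matrix $\mathrm{diag}(\alpha_1,\ldots,\alpha_k)$ representing $\pi_k Q \pi_k$ in this basis. Writing each $\pi_k B^i(t)=Q_k^{1/2} W_k^i(t)$ for independent standard $\RR^k$-valued Brownian motions $W_k^i$, the equation for $\Pi_k X(t)$ becomes an SDE on $\RR^{jk}$ of the form~\eqref{eqn:SDE} with
\begin{align*}
A_k = \underline{A}\otimes I_k \qquad \text{and} \qquad \sigma_k = \underline{\sigma}\otimes Q_k^{1/2},
\end{align*}
matching precisely the Kronecker structure analyzed in~\cref{sub:genstruct}. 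Since $A_k^m\sigma_k = (\underline{A}^m\underline{\sigma})\otimes Q_k^{1/2}$, the column span of the Kalman matrix of $(A_k,\sigma_k)$ equals $\mathrm{col}(\underline{A}_{\underline{\sigma}})\otimes\mathrm{col}(Q_k^{1/2})$, which is all of $\RR^{jk}$ because $\underline{A}_{\underline{\sigma}}$ has full rank by hypothesis and $Q_k$ is invertible by~\cref{assump:2}. Therefore~\cref{assump:1} holds for the projected system, and \cref{thm:wang} yields
\begin{align*}
(\mathcal{P}_t^k\varphi(\Pi_k X_0))^\alpha \leqslant \mathcal{P}_t^k\varphi^\alpha(\Pi_k Y_0)\exp\!\left(\frac{\alpha\,\rho_t^{(k)}(\Pi_k X_0,\Pi_k Y_0)^2}{2(\alpha-1)}\right),
\end{align*}
where $\rho_t^{(k)}$ denotes the control distance associated to the projected system.

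To finish, I would use~\eqref{eqn:tensorformulaG} to write the associated covariance as $G_t^{(k)} = \underline{G}_t\otimes Q_k$, so that $(G_t^{(k)})^{-1} = \underline{G}_t^{-1}\otimes Q_k^{-1}$. The operator inequality $\underline{G}_t^{-1}\leqslant\underline{\lambda}(j,t)^{-1}I_{j\times j}$ combined with the explicit formula~\eqref{eqn:contdist} gives
\begin{align*}
\rho_t^{(k)}(\Pi_k X_0,\Pi_k Y_0)^2 \leqslant \frac{\langle (I_{j\times j}\otimes Q_k^{-1})\Pi_k(X_0-Y_0),\,\Pi_k(X_0-Y_0)\rangle}{\underline{\lambda}(j,t)}.
\end{align*}
Since $Q_k^{-1/2}$ and $Q^{-1/2}$ coincide on $\mathrm{range}(\pi_k)$, the numerator equals $\|(I_{j\times j}\otimes Q^{-1/2})\Pi_k(X_0-Y_0)\|_W^2$; substituting into the previous display produces~\eqref{eqn:Wscaled}.

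The argument is essentially bookkeeping, and the only mildly delicate points are (i) verifying the Kalman rank condition for the projected system by exploiting the Kronecker structure together with invertibility of $Q_k$, and (ii) confirming that $Q_k^{-1/2}$ may be freely replaced by $Q^{-1/2}$ on $\mathrm{range}(\pi_k)$ when rewriting the bound in infinite-dimensional notation. No new analytic estimates are required beyond~\cref{thm:wang} and the tensor identity~\eqref{eqn:tensorformulaG}.
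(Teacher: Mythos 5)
Your proof is correct and follows essentially the same route as the paper: apply the finite-dimensional Wang--Harnack inequality (\cref{thm:wang}) to the projected $\RR^{jk}$-valued system and then bound the resulting control distance via the Kronecker structure, which is precisely the content of \cref{prop:tensor}. The only difference is that you re-derive the bound $\rho_t^{(k)}(\Pi_kX_0,\Pi_kY_0)^2\leqslant \underline{\lambda}(j,t)^{-1}\|(I_{j\times j}\otimes Q^{-1/2})\Pi_k(X_0-Y_0)\|_W^2$ inline (together with the Kalman rank check for the tensored system) rather than citing \cref{prop:tensor} directly, so in effect you have supplied the details the paper's one-line proof leaves implicit.
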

\begin{proof} This follows after combining \cref{thm:wang} with \cref{prop:tensor}. 
\end{proof}
The following infinite-dimensional Wang-Harnack inequality follows immediately from the previous result by allowing $k\to\infty$.
\begin{Theorem}
	\label{thm:infwang}
Suppose that $\underline{A}_{\underline{\sigma}}$ as in~\eqref{eqn:kalsub} is of full rank and that $X_0, Y_0$ are such that $X_0-Y_0\in\mathcal{H}_Q$. Let $\alpha>1$. Then for all bounded, measurable $\varphi: W\rightarrow \RR$ and $t>0$ we have
\begin{align*}
	(\mathcal{P}_t \varphi ( X_0))^\alpha \leqslant \mathcal{P}_t \varphi^\alpha(Y_0)\exp\bigg(\frac{\alpha}{\alpha-1} \frac{\|X_0-Y_0\|_{\mathcal{H}_Q}^2}{{2}\underline{\lambda}(j,t)} \bigg).
\end{align*}
\end{Theorem}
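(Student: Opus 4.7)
The plan is to obtain \cref{thm:infwang} by passing to the limit $k\to\infty$ in the finite-dimensional inequality~\eqref{eqn:Wscaled} from \cref{prop:project}. The whole proof reduces to justifying this limit on both sides.

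First I would reduce to the case $\varphi \in C_b(W;[0,\infty))$. For fixed $X_0,Y_0$ with $X_0-Y_0\in\mathcal{H}_Q$, the class of bounded nonnegative Borel $\varphi$ satisfying the claimed inequality is closed under bounded monotone increasing limits: if $\varphi_n\uparrow\varphi$, then $\mathcal{P}_t\varphi_n(X_0)\uparrow\mathcal{P}_t\varphi(X_0)$ and $\mathcal{P}_t\varphi_n^\alpha(Y_0)\uparrow\mathcal{P}_t\varphi^\alpha(Y_0)$ by the monotone convergence theorem (using $\varphi_n^\alpha\uparrow\varphi^\alpha$ for $\varphi_n\geqslant 0$, $\alpha\geqslant 1$). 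A standard functional monotone class argument then reduces the theorem to bounded continuous, nonnegative $\varphi$.

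Next I would use the kinematic identity $\Pi_k X(t;X_0)=\Pi_k X(t;\Pi_k X_0)$ almost surely. Since $A=\underline{A}\otimes I_H$ and $\sigma=\underline{\sigma}\otimes I_H$ commute with $\Pi_k$, applying $\Pi_k$ to~\eqref{eqn:SDEQ} shows that both processes satisfy the same finite-dimensional integral equation, and pathwise uniqueness gives the identity. Consequently $\mathcal{P}_t^k\varphi(\Pi_k X_0)=\E\varphi(\Pi_k X(t;X_0))$ and similarly for $\varphi^\alpha$ at $Y_0$. Because $\Pi_k h\to h$ in $W$ for each $h\in W$ and the solution paths are $W$-valued almost surely, continuity of $\varphi$ together with bounded convergence give $\mathcal{P}_t^k\varphi(\Pi_k X_0)\to\mathcal{P}_t\varphi(X_0)$ and $\mathcal{P}_t^k\varphi^\alpha(\Pi_k Y_0)\to\mathcal{P}_t\varphi^\alpha(Y_0)$ as $k\to\infty$.

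Finally, the constant in the exponent converges to the correct limit: the assumption $X_0-Y_0\in\mathcal{H}_Q$ is precisely the statement that the partial sums $\|(I_{j\times j}\otimes Q^{-1/2})\Pi_k(X_0-Y_0)\|_W^2$ increase to the finite number $\|X_0-Y_0\|_{\mathcal{H}_Q}^2$ as $k\to\infty$. Combining the three convergences and passing to the limit in~\eqref{eqn:Wscaled} yields the claim. The only real delicacy is the initial reduction from bounded measurable to bounded continuous test functions, since in infinite dimensions one cannot rely on strong Feller regularization; everything else is a direct consequence of the commutativity of $\Pi_k$ with the linear dynamics together with the definition of $\mathcal{H}_Q$.
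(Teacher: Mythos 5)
Your proof follows essentially the same route as the paper: pass to the limit $k\to\infty$ in the finite-dimensional bound~\eqref{eqn:Wscaled}, using $\Pi_k X(t;X_0)\to X(t;X_0)$ almost surely in $W$ (which the paper attributes to pathwise uniqueness and the definition of $\Pi_k$, and which you make explicit via the commutation of $\Pi_k$ with the linear dynamics) together with monotone convergence of $\|(I_{j\times j}\otimes Q^{-1/2})\Pi_k(X_0-Y_0)\|_W^2$ to $\|X_0-Y_0\|_{\mathcal{H}_Q}^2$. Your reduction to bounded continuous nonnegative $\varphi$ supplies a step the paper leaves implicit; the one imprecision is the appeal to a ``functional monotone class argument,'' since the set of $\varphi$ satisfying the Wang--Harnack inequality is not a vector space — it is cleaner to argue by $L^1$ density of bounded continuous functions against the finite measures $\mathcal{P}_t(X_0,\cdot)$ and $\mathcal{P}_t(Y_0,\cdot)$, or via the equivalent integrated Harnack formulation the paper invokes for \cref{thm:quasi}.
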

\begin{proof}
One need only show that for any $X_0 \in W$,
\[
\Pi_k X(t) \xrightarrow{a.s.} X(t)
\]
as $k\rightarrow \infty$, and this follows immediately by uniqueness of solutions and the definition of $\Pi_k$.
\end{proof}

The equivalence of Wang-Harnack inequalities with integrated Harnack inequalities has been established in \cite{DriverGordina2009,BaudoinGordinaMelcher2013,Gordina2017}, along with how these estimates imply quasi-invariance results of the following kind.

\begin{Theorem}
	\label{thm:quasi}
	Fix $t>0$. The measure $\mu_t:=\operatorname{Law}(X_t(0))$ is quasi-invariant under translations by elements of $\mathcal{H}_Q$, that is, for $X_0\in \mathcal{H}_Q$ the measure $\mu_t(X_0):=\operatorname{Law}(X_t(X_0))$ is mutually absolutely continuous with respect to $\mu_t$.
	Moreover, for all $p>1$
	\[ \left\|\frac{d\mu_t(X_0)}{d\mu_t}\right\|_{L^p(W,\mu_t)} \leqslant \exp\left(\frac{1+p}{{{2}\underline{\lambda}(j,t)}}  \|  X_0\|_{\mathcal{H}_Q}^2 \right).\]
\end{Theorem}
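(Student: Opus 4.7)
The plan is to apply the Wang--Harnack inequality of Theorem~\ref{thm:infwang} with $Y_0 = 0$ and then invoke the standard duality between Wang--Harnack and integrated Harnack inequalities established in~\cite{DriverGordina2009, BaudoinGordinaMelcher2013, Gordina2017}; most of the analytic work has already been done at the semigroup level in Theorem~\ref{thm:infwang}, so what remains is essentially a dualization.

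First I would settle mutual absolute continuity. Fix $X_0 \in \mathcal{H}_Q$ and let $A \subset W$ be Borel with $\mu_t(A) = 0$. Applying Theorem~\ref{thm:infwang} with $\varphi = \mathbf{1}_A$ and $Y_0 = 0$ yields $\mu_t(X_0)(A)^\alpha = (\mathcal{P}_t \mathbf{1}_A(X_0))^\alpha \leqslant \mu_t(A)\exp\!\big(\tfrac{\alpha}{\alpha-1}\tfrac{\|X_0\|^2_{\mathcal{H}_Q}}{2\underline{\lambda}(j,t)}\big) = 0$, so $\mu_t(X_0) \ll \mu_t$. Interchanging the roles of the two initial conditions in Theorem~\ref{thm:infwang} (legitimate since $-X_0 \in \mathcal{H}_Q$) yields the reverse absolute continuity, hence $\phi := d\mu_t(X_0)/d\mu_t$ is well defined and strictly positive $\mu_t$-almost everywhere.

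The $L^p$ bound follows by duality. Rewriting the Wang--Harnack inequality at the density level as $\big(\int_W \varphi\, \phi\, d\mu_t\big)^\alpha \leqslant \big(\int_W \varphi^\alpha\, d\mu_t\big) \exp\!\big(\tfrac{\alpha}{\alpha-1}\tfrac{\|X_0\|^2_{\mathcal{H}_Q}}{2\underline{\lambda}(j,t)}\big)$ for all bounded $\varphi \geqslant 0$, I would test against $\varphi = (\phi \wedge N)^{1/(\alpha-1)}$ and let $N \to \infty$ by monotone convergence to obtain $\|\phi\|_{L^{\alpha/(\alpha-1)}(\mu_t)} \leqslant \exp\!\big(\tfrac{1}{\alpha-1}\tfrac{\|X_0\|^2_{\mathcal{H}_Q}}{2\underline{\lambda}(j,t)}\big)$. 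Setting $p := \alpha/(\alpha-1)$, so that $\alpha - 1 = 1/(p-1)$, converts this into $\|\phi\|_{L^p(\mu_t)} \leqslant \exp\big((p-1)\|X_0\|^2_{\mathcal{H}_Q}/(2\underline{\lambda}(j,t))\big)$, which implies the stated bound a fortiori since $p - 1 \leqslant p + 1$. The main technical obstacle will be justifying the duality/truncation step in the infinite-dimensional Banach setting, that is, applying Wang--Harnack against an a priori unbounded density; this is exactly the content of the density-level duality lemmas (cf.\ Lemma~D.1 of~\cite{DriverGordina2009}, Lemma~2.11 of~\cite{BaudoinGordinaMelcher2013}, and Proposition~4.1 of~\cite{Gordina2017}) and follows from $\phi \in L^1(\mu_t)$.
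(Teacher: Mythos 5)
Your proposal is correct and follows the same route the paper intends: the paper gives no in-text proof, pointing instead to the equivalence of Wang--Harnack and integrated Harnack inequalities in \cite{DriverGordina2009,BaudoinGordinaMelcher2013,Gordina2017}, and you have reconstructed exactly that dualization with $Y_0=0$ and $\varphi=(\phi\wedge N)^{1/(\alpha-1)}$. Two small remarks. First, in the passage to the limit $N\to\infty$ it is cleaner to avoid asserting that both sides tend to $\int\phi^{\alpha/(\alpha-1)}d\mu_t$ (which is not yet known to be finite): instead use $\phi \geqslant \phi\wedge N$ to see that the left side dominates $I_N^\alpha$ where $I_N := \int(\phi\wedge N)^{\alpha/(\alpha-1)}d\mu_t$, giving $I_N^{\alpha-1}\leqslant \exp\big(\tfrac{\alpha}{\alpha-1}\tfrac{\|X_0\|^2_{\mathcal{H}_Q}}{2\underline{\lambda}(j,t)}\big)$ uniformly in $N$, and then apply monotone convergence. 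Second, your computation actually yields the sharper constant $\exp\big((p-1)\|X_0\|^2_{\mathcal{H}_Q}/(2\underline{\lambda}(j,t))\big)$; this is consistent with the finite-dimensional integrated Harnack bound in \cref{thm:transport} and strictly improves the $1+p$ appearing in the statement of \cref{thm:quasi}, so the ``a fortiori'' step is correct but could be removed by simply recording the stronger estimate.
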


\begin{Remark}
In each of the examples considered in \cref{sec:class} we saw that $\underline{A}_{\underline{\sigma}}$ has full rank.  Thus \cref{thm:quasi} applies to the infinite-dimensional versions of these equations when the noise becomes infinite dimensional ($k\rightarrow \infty$) as described above.
\end{Remark}


%

\subsection{Mutual absolute continuity at time infinity}
Under further conditions, we will use the results of~\cref{sec:modgrad} to deduce that the laws of the infinite-dimensional Markov process $X(t)$ solving~\eqref{eqn:SDEQ} started from two initial conditions $X_0$ and $Y_0$ in $W$ become mutually absolutely continuous at ``time infinity". This follows from a strengthening of \cref{thm:infwang}, noting that that result only applies to sufficiently smooth initial conditions in $W$ at a finite time $t>0$.  Formally taking $t,k\rightarrow \infty$ in the bound~\eqref{eqn:Wscaled} suggests this should hold, but more care needs to be taken when we pass to the limit.

To this end, for $k\in \N$ define
\begin{align*}
\Delta_k:=(I_W-\Pi_k).
\end{align*}
\begin{Assumption}
\label{assump:3}
The matrix $\underline{A}_{\underline{\sigma}}$ in~\eqref{eqn:kalsub} is of full rank and the constant $\underline{\lambda}(j, t)>0$ as in~\eqref{def:lambda}  satisfies the following conditions:
\begin{itemize}
\item[(i)] $\underline{\lambda}(j,t)\rightarrow \infty$ as $t\rightarrow \infty$.
\item[(ii)]  Define by (i) a sequence $\{ t_k \}$ of positive times $t_k\rightarrow \infty$ such that $\underline{\lambda}(j, t_k)= \alpha_k^{-1}$ for all $k$ where we recall that the $\alpha_k$ are as in~\eqref{eqn:Qdef}.  Then there exists a $p>1$ such that for any $X_0 \in W$ we have that
	\begin{align*}
\E\| \Delta_k X(t_k; X_0)\|^p_{W} \rightarrow 0.
\end{align*}
\end{itemize}
\end{Assumption}

Under \cref{assump:3}, we can prove our main result at ``time infinity".  However, we first establish a natural condition which ensures \cref{assump:3}(ii) is satisfied.
\begin{Proposition}
\label{prop:conditiontimeinf}
Suppose that the underlying matrix $\underline{A}$ as in~\eqref{eqn:Asigdef} is such that $\langle \underline{A} x , x \rangle_{\RR^j} \leqslant 0$ for all $x\in \RR^j$.
Suppose, furthermore, that \cref{assump:3}(i) is satisfied and that the sequence of times $\{ t_k \}$ defined by $\lambda(j, t_k) = \alpha_k^{-1}$, $k\in \N$, satisfies the condition \begin{align}\label{eqn:growthlam}
t_k\sum_{\ell=k+1}^\infty \alpha_\ell \rightarrow 0 \text{ as } k\rightarrow \infty.
\end{align}
Then \cref{assump:3}(ii) is satisfied with $p=2$.
\end{Proposition}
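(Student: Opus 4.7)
The plan is to exploit the explicit form of the solution together with the tensor structure $A = \underline{A}\otimes I_H$, $\sigma = \underline{\sigma}\otimes I_H$ to reduce everything to a straightforward computation. Writing
\begin{align*}
X(t;X_0) = e^{tA}X_0 + \int_0^t e^{(t-s)A}\sigma\,dB(s),
\end{align*}
the key observation is that both $e^{tA}$ and $\sigma$ commute with the projection $\Delta_k = I_j \otimes (I_H - \pi_k)$ (since the second factor in the tensor decompositions is the identity on $H$, and $\pi_k$ is an $H$-projection). Consequently, using that the stochastic integral has mean zero and is orthogonal to the deterministic drift in $L^2(\Omega; W)$, one obtains
\begin{align*}
\E\|\Delta_k X(t_k; X_0)\|_W^2 = \|e^{t_k A}\Delta_k X_0\|_W^2 + \E\bigg\|\int_0^{t_k} e^{(t_k - s)A}\sigma\,\Delta_k\,dB(s)\bigg\|_W^2.
\end{align*}
This reduces the problem to controlling the two terms independently.

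For the deterministic drift term, I would expand $\Delta_k X_0 = \sum_{\ell > k} x^\ell \otimes e_\ell$ in the $Q$-eigenbasis $\{e_\ell\}$ (with $x^\ell \in \RR^j$) so that $\|e^{t_k A}\Delta_k X_0\|_W^2 = \sum_{\ell>k} |e^{t_k\underline{A}} x^\ell|_{\RR^j}^2$. The dissipativity hypothesis $\langle \underline{A}x, x\rangle_{\RR^j} \leqslant 0$ yields $|e^{t_k\underline{A}} x^\ell| \leqslant |x^\ell|$, hence $\|e^{t_k A}\Delta_k X_0\|_W \leqslant \|\Delta_k X_0\|_W$, which converges to $0$ as $k\to\infty$ because $X_0 \in W$.

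For the stochastic integral, I would apply the It\^o isometry for the $Q$-Wiener process on $W$ together with the tensor identity $\sigma \tilde{Q}^{1/2} \Delta_k = (\underline{\sigma}) \otimes ((I_H-\pi_k) Q^{1/2})$ (where $\tilde{Q} = I_j \otimes Q$) and the multiplicativity of the Hilbert–Schmidt norm on tensor products. This gives the clean formula
\begin{align*}
\E\bigg\|\int_0^{t_k} e^{(t_k-s)A}\sigma\,\Delta_k\,dB(s)\bigg\|_W^2 = \operatorname{tr}(\underline{\Sigma}_{t_k}) \cdot \sum_{\ell>k}\alpha_\ell,
\end{align*}
where $\underline{\Sigma}_t := \int_0^t e^{u\underline{A}}\,\underline{\sigma}\underline{\sigma}^T e^{u\underline{A}^T}\,du$. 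Using dissipativity once more, $\operatorname{tr}(\underline{\Sigma}_{t_k}) \leqslant t_k \operatorname{tr}(\underline{\sigma}\underline{\sigma}^T)$, so this contribution is bounded above by $C t_k \sum_{\ell>k}\alpha_\ell$ for a constant $C$ depending only on $\underline{\sigma}$. By the hypothesis~\eqref{eqn:growthlam}, this quantity vanishes as $k\to\infty$.

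Combining the two bounds proves \cref{assump:3}(ii) with $p=2$. There is no conceptually difficult step here; the main care is needed in setting up the tensor factorization of $\Sigma_{t_k}$ and the action of $\Delta_k$ on it, so that the finite-dimensional trace $\operatorname{tr}(\underline{\Sigma}_{t_k})$ and the infinite-dimensional tail $\sum_{\ell>k}\alpha_\ell$ decouple cleanly in the It\^o isometry.
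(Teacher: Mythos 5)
Your proof is correct, and it arrives at precisely the same quantitative bound as the paper, namely
\begin{align*}
\E\|\Delta_k X(t_k;X_0)\|_W^2 \leqslant \|\Delta_k X_0\|_W^2 + \operatorname{tr}(\underline{\sigma}\,\underline{\sigma}^{\ast})\, t_k\sum_{\ell>k}\alpha_\ell,
\end{align*}
but the route you take is genuinely different. The paper's proof sets $Y_k(t)=\Delta_k X(t;X_0)$, applies It\^{o}'s formula to $\|Y_k(t)\|_W^2$ (with a localizing stopping time $\tau_{k,m}$ to justify vanishing of the martingale part), and uses $\langle\underline{A}x,x\rangle_{\RR^j}\leqslant 0$ to discard the drift contribution, leaving only the quadratic-variation term $\|\underline{\sigma}\|_{HS}^2\sum_{\ell>k}\alpha_\ell\cdot t_k$. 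You instead invoke the explicit Gaussian mild-solution formula, commute $\Delta_k$ through $e^{tA}$ and $\sigma$ using the tensor structure, use the $L^2$-orthogonality of the mean and the centered stochastic integral, and then apply the It\^{o} isometry directly. The dissipativity hypothesis enters twice in your version (once for $\|e^{t_k\underline{A}}\|\leqslant 1$ on the deterministic term, once to bound $\operatorname{tr}(\underline{\Sigma}_{t_k})\leqslant t_k\operatorname{tr}(\underline{\sigma}\,\underline{\sigma}^{\ast})$), whereas the paper uses it once to kill the drift. Your approach sidesteps the need for stopping times and localization because the It\^{o} isometry already gives the second moment of the stochastic convolution in closed form; the paper's approach is slightly more self-contained in that it does not require knowing that $X$ is a Gaussian process. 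Both are elementary and both are correct. One small notational slip: in the tensor identity you should write $\sigma\Delta_k\tilde{Q}^{1/2}$ rather than $\sigma\tilde{Q}^{1/2}\Delta_k$ for the integrand in the isometry, although since $\Delta_k$ and $\tilde{Q}^{1/2}$ commute this has no bearing on correctness.
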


\begin{Remark}
\label{rem:propmix}
Suppose that $\underline{\lambda}(j, t) \geqslant  c_j t^3 -d_j$ for all $t\geqslant 0$ for some constants $c_j,d_j >0$ and that $\alpha_k = 1/k^2$.  Then in this case,
\begin{align*}
t_k \lesssim k^{2/3}
\end{align*}
 so that
\begin{align}
\label{eqn:timecond}
t_k \sum_{\ell = k+1}^\infty \alpha_\ell  \lesssim \frac{1}{k^{1/3}}  \rightarrow 0 \text{ as } k\rightarrow \infty.
\end{align}
In short, the smallest positive eigenvalue $\underline{\lambda}(j, t)$ must grow sufficiently fast as $t\rightarrow \infty$ with respect to the sequence $\alpha_k$ so that~\eqref{eqn:growthlam} is satisfied.
\end{Remark}
%

\begin{proof}[Proof of~\cref{prop:conditiontimeinf}]
Let $Y_k(t):= \Delta_k X(t; X_0)$ and define
\begin{align*}
\tau_{k,m} = \inf \{ t \geqslant 0 \, : \, \| Y_k(t) \|_W \geqslant m \}.
\end{align*}
Set $\tau_{k,m}(t) = t \wedge \tau_{k,m}$ and observe that, by It\^{o}'s formula and the condition $\langle \underline{A} x, x \rangle_{\RR^j} \leqslant 0$ we have
\begin{align*}
\E \| Y_k(\tau_{k,m}(t)) \|_W^2 &= \| Y_k(0)\|_W^2 + 2 \E \int_0^{\tau_{k,m}(t)} \sum_{\ell=1}^j \langle (A Y_k(s))_\ell, (Y_k(s))_\ell\rangle_H  \, ds \\
& \qquad + \sum_{\ell_1, \ell_2=1}^j  \underline{\sigma}_{\ell_1 \ell_2} ^2 \sum_{\ell=k+1}^\infty \alpha_\ell \E \tau_{k,m}(t)\\
& \leqslant  \| Y_k(0)\|_W^2+ \sum_{\ell_1, \ell_2=1}^j  \underline{\sigma}_{\ell_1 \ell_2} ^2 \sum_{\ell=k+1}^\infty \alpha_\ell  t.
\end{align*}
Taking $m\rightarrow \infty$ and then plugging in $t=t_k$, we arrive at the following estimate
\begin{align*}
\E \| Y_k(t_k) \|_W^2 &\leqslant \| Y_k(0)\|_W^2 +\sum_{\ell_1, \ell_2=1}^j  \underline{\sigma}_{\ell_1 \ell_2} ^2 t_k \sum_{\ell=k+1}^\infty \alpha_\ell   .
\end{align*}
Applying the hypothesis~\eqref{eqn:timecond}, we conclude the result.
\end{proof}

We are now prepared to state and prove our uniqueness result.

\begin{Corollary}[Uniqueness of stationary distributions via Wang-Harnack]
\label{thm:asf}
Suppose that~\cref{assump:3} is satisfied and $\mu$ and $\nu$ are stationary distributions for the Markov process $X(t)$.  Then for any $\alpha >1$ there exists a constant $C=C(\mu, \nu, \alpha)>0$ such that
\begin{align}
\label{eqn:mainbound}
	(\mu(\varphi))^\alpha \leqslant C\nu(\varphi^\alpha) 
\end{align}
for all $\varphi: W\rightarrow [0, \infty)$ bounded, measurable.  Consequently, there is at most one stationary distribution corresponding to $X(t)$.
\end{Corollary}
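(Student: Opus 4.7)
The plan is to first establish the inequality~\eqref{eqn:mainbound} for at least one value of $\alpha > 1$ by applying the finite-dimensional Wang--Harnack bound (\cref{prop:project}) at the times $t = t_k$ supplied by \cref{assump:3} and integrating against the product coupling $\mu \otimes \nu$, then to upgrade this to the uniqueness $\mu = \nu$ via a standard ergodic-decomposition argument. The claimed bound for \emph{every} $\alpha > 1$ then follows tautologically from Jensen once $\mu = \nu$ is known.

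Concretely, with the eigenvalues of $Q$ ordered so that $\alpha_1 \geqslant \alpha_2 \geqslant \cdots$, the Rayleigh bound $\alpha_k \|\Pi_k z\|_{\mathcal{H}_Q}^2 \leqslant \|z\|_W^2$ combined with the calibration $\underline{\lambda}(j,t_k) = \alpha_k^{-1}$ collapses the exponent in \cref{prop:project} at $t = t_k$ to the dimension-free quantity $\tfrac{\alpha\|X_0-Y_0\|_W^2}{2(\alpha-1)}$, yielding
\[
(\mathcal{P}_{t_k}^k \varphi(\Pi_k X_0))^\alpha \leqslant \mathcal{P}_{t_k}^k \varphi^\alpha(\Pi_k Y_0) \exp\!\left(\tfrac{\alpha\|X_0 - Y_0\|_W^2}{2(\alpha-1)}\right)
\]
for all $X_0, Y_0 \in W$ and bounded measurable $\varphi \geqslant 0$. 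I would then integrate against $d\mu(X_0) \otimes d\nu(Y_0)$: stationarity of $\nu$ collapses $\int \mathcal{P}_{t_k}^k \varphi^\alpha(\Pi_k Y_0)\, d\nu(Y_0)$ into $\nu(\varphi^\alpha \circ \Pi_k)$, while on the left Jensen applied to $x \mapsto x^\alpha$ together with stationarity of $\mu$ gives $(\mu(\varphi))^\alpha \leqslant \int (\mathcal{P}_{t_k}\varphi)^\alpha\, d\mu$. The mismatch between $\mathcal{P}_{t_k}\varphi$ and $\mathcal{P}_{t_k}^k\varphi \circ \Pi_k$ is bounded in $L^1(\mu)$ by $\|\varphi\|_{\mathrm{Lip}} \int \E\|\Delta_k X(t_k;X_0)\|_W\, d\mu(X_0)$ for Lipschitz $\varphi$, which vanishes as $k \to \infty$ by \cref{assump:3}(ii), stationarity of $\mu$, and dominated convergence. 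Provided the coupling moment $\iint \exp(\tfrac{\alpha\|X_0-Y_0\|_W^2}{2(\alpha-1)})\, d\mu\, d\nu$ is finite --- which holds for $\alpha$ above a threshold determined by the covariances of $\mu$ and $\nu$, since stationary laws of the linear Gaussian equation~\eqref{eqn:SDEQ} are themselves Gaussian and therefore enjoy Fernique-type exponential moments --- passage to $k \to \infty$ (with dominated convergence for $\nu(\varphi^\alpha \circ \Pi_k) \to \nu(\varphi^\alpha)$, and extension from bounded continuous to bounded measurable $\varphi$ by a standard approximation argument) produces~\eqref{eqn:mainbound} for at least this choice of $\alpha$.

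Given~\eqref{eqn:mainbound} for some $\alpha > 1$, uniqueness follows by contradiction. If $\mu \neq \nu$ were two distinct stationary distributions, the ergodic decomposition would allow the reduction to two distinct \emph{ergodic} stationary measures, which are necessarily mutually singular. Choosing a Borel set $A$ with $\mu(A) = 1$ and $\nu(A) = 0$ and applying~\eqref{eqn:mainbound} to $\varphi = \mathbf{1}_A$ produces $1 \leqslant C \cdot 0 = 0$, the required contradiction. Hence $\mu = \nu$, at which point the bound $(\mu(\varphi))^\alpha \leqslant \nu(\varphi^\alpha)$ for every $\alpha > 1$ is immediate from Jensen's inequality applied to $\mu$.

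The main obstacle I anticipate is the simultaneous management of the twin $k$-limits (in the projection $\Pi_k$ and the time $t_k \to \infty$) together with the Lipschitz-to-bounded-measurable extension of $\varphi$, all while keeping the coupling moment $\iint \exp(c\|X_0-Y_0\|_W^2)\, d\mu\, d\nu$ quantitatively under control: one must verify that the Fernique thresholds for both $\mu$ and $\nu$ really do accommodate the exponent $c = \tfrac{\alpha}{2(\alpha-1)}$ for at least one $\alpha > 1$, since it is only at that stage that the constant $C(\mu,\nu,\alpha)$ emerges from the argument in closed form.
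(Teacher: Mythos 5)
Your approach is a genuinely different route from the paper's, and it contains a gap that you yourself partially flag but do not resolve.

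The essential structural difference is this: you integrate the Wang--Harnack exponential weight $\exp\!\bigl(\tfrac{\alpha}{2(\alpha-1)}\|X_0-Y_0\|_W^2\bigr)$ directly against $\mu\otimes\nu$ over all of $W\times W$ and hope the resulting integral is finite, whereas the paper truncates the initial data to a ball $B_R$ with $\mu(B_R^c),\nu(B_R^c)<\epsilon/2$, so that on $B_R\times B_R$ the exponential factor is \emph{deterministically} bounded by a constant $C(\alpha,R)$. Your Rayleigh--quotient collapse of the finite-dimensional exponent to $\tfrac{\alpha}{2(\alpha-1)}\|X_0-Y_0\|_W^2$ at the calibrated times $t_k$ is correct and is also the mechanism used (implicitly) in the paper; the divergence between the two arguments is only in how the exponential is controlled afterwards.

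The gap: the coupling-moment condition you require need not hold for \emph{any} $\alpha>1$. As $\alpha\to\infty$ the coefficient $c=\tfrac{\alpha}{2(\alpha-1)}$ decreases only to $1/2$, so after Young's inequality you need roughly $\int e^{\|X\|_W^2}\,d\mu<\infty$ and the same for $\nu$. Fernique's theorem only guarantees $\int e^{\epsilon\|X\|_W^2}\,d\mu<\infty$ for \emph{some} $\epsilon>0$, and that $\epsilon$ is bounded above by $\tfrac{1}{2\sigma_{\max}^2}$ where $\sigma_{\max}^2$ is the top eigenvalue of the covariance of $\mu$; for a stationary measure with $\sigma_{\max}^2\geqslant 1/2$ your integral diverges for every $\alpha>1$ and the argument never gets off the ground. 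Since the base case may fail, the otherwise-valid Jensen bootstrap from one $\alpha_0$ to all $\alpha>1$ cannot rescue the proof. A secondary issue is that your claim that any stationary law of~\eqref{eqn:SDEQ} is Gaussian is not established anywhere in the paper and would itself need an argument; indeed, proving it without already knowing uniqueness risks circularity, since a mixture of distinct ergodic stationary measures would be stationary but not Gaussian. The paper's truncation to $B_R$ sidesteps both problems at once: no moment hypothesis on $\mu$ or $\nu$ is needed, no structural hypothesis beyond being probability measures, and the argument works for every $\alpha>1$ directly, at the price of carrying the error terms $\epsilon\|\varphi\|_{L^\infty}^\alpha$ and the $C_k(\varphi,\cdot)$ corrections until the end. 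Your handling of the $\Delta_k$ correction via \cref{assump:3}(ii) and the Lipschitz-to-measurable extension is in the same spirit as the paper's and is fine.
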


\begin{proof}
Note that the final conclusion in the result follows immediately by symmetry of the bound~\eqref{eqn:mainbound} and ergodic decomposition, for any two such stationary distributions must be mutually absolutely continuous.  Let $\alpha >1$.
In order to prove~\eqref{eqn:mainbound}, it suffices to prove the bound for all $\varphi:W\rightarrow [0, \infty)$ which are bounded, Lipschitz and such that $\varphi^\alpha$ is also Lipschitz; that is, $\| \varphi \|_{L^\infty} < \infty$,
\begin{align*}
\| \varphi\|_{\text{Lip}}:= \sup_{\substack{X,Y \in W\\ X\neq Y}} \frac{| \varphi(X)-\varphi(Y)|}{\|X-Y\|_W} < \infty \qquad \text{ and } \qquad \| \varphi^\alpha \|_{\text{Lip}} < \infty.
\end{align*}
Let $\epsilon \in (0,1)$ and pick $R=R(\mu, \nu, \alpha)>0$ large enough so that $B_R:= \{ \|X\|_W < R \}$ satisfies
\begin{align*}
 \mu(B_R^c) < \tfrac{\epsilon}{2} \qquad \text{ and } \qquad    \nu(B_R^c) < \tfrac{\epsilon}{2}.\end{align*}
Then by invariance and Jensen's inequality we have that
\begin{align*}
\mu(\varphi)^\alpha = \bigg[\int \mu(dX) \mathcal{P}_{t_k} \varphi(X)\bigg]^\alpha &\leqslant \int  \mu(dX) (\mathcal{P}_{t_k} \varphi(X))^\alpha \leqslant  \int_{B_R} \mu(dX) (\mathcal{P}_{t_k} \varphi(X))^\alpha + \tfrac{\epsilon}{2} \| \varphi \|_{L^\infty}^\alpha. \end{align*}
Multiplying the previous inequality by $\nu(W)=1$ and writing this as $\nu(W)= \nu(B_R)+ \nu(B_R^c)$ gives
\begin{align*}
\mu(\varphi)^\alpha &\leqslant  \nu(B_R) \int_{B_R} \mu(dX) (\mathcal{P}_{t_k}\varphi(X))^\alpha + \epsilon \| \varphi\|_{L^\infty}^\alpha\\
  &=  \int_{B_R} \int_{B_R}(\mathcal{P}_{t_k}\varphi(X))^\alpha \mu(dX) \nu(dY)+ \epsilon   \| \varphi\|_{L^\infty}^\alpha.
\end{align*}
In order to control the double integral above, observe that
\begin{align*}
\mathcal{P}_{t_k}\varphi(X)= \mathcal{P}_{t_k}^k \varphi( \Pi_k X) + \E (\varphi(X(t_k; X)) - \varphi(\Pi_k X(t_k; X)))
\end{align*}
and
\begin{align*}
|\E (\varphi(X(t_k;X)) - \varphi(\Pi_k X(t_k;X))) | \leqslant 2\| \varphi \|_{L^\infty} \wedge\big\{ \| \varphi \|_{\text{Lip}}  (\E\| \Delta_k X(t_k, X)\|^p)^{1/p}\big\}=:C_k(\varphi, X).
\end{align*}
By \cref{assump:3}, we have that $C_k(\varphi, X) \rightarrow 0$ pointwise in $X$ as $k\rightarrow \infty$.
Hence applying~\cref{prop:project} we have
\begin{align*}
 &\int_{B_R} \int_{B_R}(\mathcal{P}_{t_k}\varphi(X))^\alpha \mu(dX) \nu(dY)\\
 &\leqslant 2^\alpha \int_{B_R} \int_{B_R}(\mathcal{P}_{t_k}^k\varphi(\Pi_k X))^\alpha \mu(dX) \nu(dY)+ 2^\alpha \int_{B_R} \mu(dX) C_k(\varphi, X) \\
 & \leqslant 2^\alpha C(\alpha, R) \int_{B_R} \int_{B_R}(\mathcal{P}_{t_k}^k\varphi^\alpha(\Pi_k Y)) \mu(dX) \nu(dY)+ 2^\alpha \int_{B_R} \mu(dX) C_k(\varphi, X) \\
 & \leqslant  2^\alpha C(\alpha, R) \int_{B_R} \int_{B_R}(\mathcal{P}_{t_k}\varphi^\alpha(Y)) \mu(dX) \nu(dY)+ 2^\alpha \int_{B_R} \mu(dX) C_k(\varphi, X)\\
  & \qquad + 2^{2\alpha} C(\alpha, R) \int_{B_R} \nu(dY) C_k(\varphi^\alpha, Y).  \\
  & \leqslant 2^\alpha C(\alpha, R) \nu(\varphi^\alpha)+ 2^\alpha \int_{B_R} \mu(dX) C_k(\varphi, X) + 2^{2\alpha} C(\alpha, R) \int_{B_R} \nu(dY) C_k(\varphi^\alpha, Y).
  \end{align*}
 Taking $k\rightarrow \infty$ using \cref{assump:3} and the bounded convergence theorem and then letting $\epsilon \rightarrow 0$ we obtain the desired bound~\eqref{eqn:mainbound} for all $\varphi: W\rightarrow [0, \infty)$ bounded, Lipschitz with $\varphi^\alpha$ Lipschitz. This finishes the proof.
\end{proof}

We now revisit the linear kinetic Fokker Planck equation and the degenerately damped oscillators considered in~\cref{sec:class}.

\subsubsection{Linear kinetic Fokker-Planck}
\label{sec:KFP2}

In the setting of~\cref{sec:KFP} in Case 1 ($\gamma \gg 1$), we recall $j=2$ and that we obtained the following bound
\begin{align}
\label{eqn:KFPlb}
\underline{\lambda}(2, t) \geqslant  \frac{e^{t/\gamma}}{16}
\end{align}
for all $t\geqslant \gamma \geqslant \gamma_L$.  Fixing $\gamma \geqslant \gamma_L$ and a summable sequence $\{\alpha_k\}_{k\in \N}$, we observe by~\eqref{eqn:KFPlb} that the sequence $\{ t_k\}$ in \cref{assump:3} satisfies, for all $k$ large enough,
\begin{align*}
t_k \leqslant \gamma \log (\alpha_k^{-1}) + \gamma \log(16).
\end{align*}
Furthermore, in this context it is clear that  $\langle \underline{A} x, x\rangle_{\RR^2}\leqslant 0$.  Thus by \cref{prop:conditiontimeinf}, for \cref{assump:3} to be satisfied, it suffices that $\{ \alpha_k \}$ satisfy
\begin{align}
\label{eqn:FPnoise}
\log (\alpha_k^{-1}) \sum_{\ell=k+1}^\infty \alpha_\ell \rightarrow 0 \text{ as } k \rightarrow \infty.
\end{align}
Clearly, condition~\eqref{eqn:FPnoise} is satisfied if the noise decays fast enough at large scales, e.g. $\alpha_k \sim 1/k^p$ or even $\alpha_k \sim 1/(k\log (k)^p)$ for some $p>1$.  In particular, under~\eqref{eqn:FPnoise}, uniqueness of stationary distributions follows from~\cref{thm:asf}.  The same conclusion also holds in the case when $\gamma \approx 0$.

\subsubsection{Oscillators with some damping}
In the setting of~\cref{sec:oscdamp}, we obtained the bound
\begin{align*}
\underline{\lambda}(j, t) \geqslant \underline{\lambda}(j, 1) \frac{c_j}{r_j} (e^{r_j (t-1)} -1)
\end{align*}
for all $t\geqslant 1$~\eqref{eqn:oscdamp} where $r_j$ and $c_j$ are positive constants defined in~\eqref{eqn:rcdef}.  Because $\underline{\lambda}(j,1) >0$ and $\langle \underline{A} x,x\rangle_{\RR^j}\leqslant 0$, a similar argument to the one used in~\cref{sec:KFP2} implies that, provided (\ref{eqn:FPnoise}) is satisfied,
there is at most one stationary distribution for the corresponding infinite-dimensional process~\eqref{eqn:SDEQ} by~\cref{prop:conditiontimeinf} and~\cref{thm:asf}.

\bibliographystyle{plain}
\bibliography{linear}

\end{document}